\documentclass[11pt,reqno]{amsart}
\usepackage[text={155mm,230mm},centering]{geometry}
\usepackage[mathscr]{eucal}
\usepackage[all]{xy}
\usepackage{mathrsfs}
\usepackage{amsfonts}
\usepackage{amsmath}
\usepackage{amsthm}
\usepackage{amssymb}
\usepackage{latexsym}
\usepackage{color}
\usepackage{graphicx}
\usepackage{float} 
\usepackage{subfigure}
\usepackage{stmaryrd}
\usepackage{pdfpages}
\usepackage[normalem]{ulem}
\usepackage{hyperref}
\newtheorem{theorem}{Theorem}[section]
\newtheorem{lemma}[theorem]{Lemma}

\newtheorem{corollary}[theorem]{Corollary}
\newtheorem{proposition}[theorem]{Proposition}
\newtheorem{proposition-definition}[theorem]{Proposition-Definition}
\newtheorem{example-proposition}[theorem]{Example-Proposition}

\newtheorem{question}[theorem]{Question}

\theoremstyle{definition}
\newtheorem{example}[theorem]{Example}
\newtheorem{definition}[theorem]{Definition}

\newtheorem{remark}[theorem]{Remark}
\newtheorem{remarks}[theorem]{Remarks}

\newtheorem*{ack}{Acknowledgements}

\newcommand{\bbC}{\mathbb{C}}
\newcommand{\bbZ}{\mathbb{Z}}
\newcommand{\gl}{\mathfrak{gl}}

\newcommand{\be}{\mathbf{e}}
\newcommand{\GL}{\mathrm{GL}}

\newcommand{\tos}{\mapsto}

\newcommand{\rmG}{\mathrm{G}}

\newcommand{\bm}{ \mathbf{m}}

\newcommand{\Hom}{\mathrm{Hom}}
\newcommand{\End}{\mathrm{End}}

\newcommand{\Alg}{ {\rm Alg}}
\newcommand{\Rep}{\mathrm{Rep}}
\newcommand{\Tr}{\mathrm{Tr} \,}
\newcommand{\bd}{\mathbf{d}}

\newcommand{\mD}{\mathcal{D}}

\newcommand{\QQ}{\mathbb{C} \overline{Q}}
\newcommand{\DQ}{\overline{Q}}
\newcommand{\ldb}{\mathopen{\{\!\!\{}}
\newcommand{\rdb}{\mathclose{\}\!\!\}}}
\newcommand{\bw}{\mathbf{w}}

\newcommand{\qneck}{{\mathbb{C} \overline{Q}}_{\hbar}}

\newcommand{\into}{\hookrightarrow}
\newcommand{\Spec}{\mathrm{Spec}\,}

\newcommand{\fkg}{\mathfrak{g}}

\newcommand{\fg}{\mathfrak{g}}
\newcommand{\HH}{\mathsf{HH}}
\newcommand{\br}{\mathbf{r}}

\newcommand{\id}{\mathrm{id}}
\newcommand{\rmH}{\mathrm{H}}

\newcommand{\cyc}{\mathrm{cyc}}
\newcommand{\tr}{\mathrm{tr}}
\newcommand{\Sym}{\operatorname{Sym}}

\newcommand{\Com}{\mathsf{Com}}
\newcommand{\fkh}{\mathfrak{h}}
\newcommand{\hc}{\mathcal{HC}}
\newcommand{\reg}{\mathrm{reg}}

\newcommand{\sph}{{\mathrm{sph}}}

\newcommand{\gr}{\mathrm{gr}\,}
\newcommand{\cc}{\circ}

\newcommand{\loc}{{\mathrm{loc}}}
\newcommand{\rmT}{\mathrm{T}}

\newcommand{\bfl}{\mathbf{l}}

\newcommand{\pa}{\partial}

\newcommand{\Ord}{\operatorname{Ord}}
\newcommand{\mC}{\mathcal{C}}
\newcommand{\ind}{\mathbf{1}}
\newcommand{\bfi}{\mathbf{i}}
\newcommand{\mM}{\mathcal{M}}

\newcommand{\pr}{\mathrm{pr}}
\newcommand{\tTr}{\widetilde{\Tr}}
\newcommand{\pol}{\mathrm{pol}}
\makeatletter

\newcommand{\Rmnum}[1]{\expandafter\@slowromancap\romannumeral #1@}
\makeatother

\allowdisplaybreaks
\title[Quantization of Preprojective Algebras]{Quantization
of Preprojective Algebras and the Type A
Rational Cherednik Algebra}

\author{Meiliang Liu}
\author{Jun Zhang}
\address{Liu, Zhang: 
School of Mathematics, 
Sichuan University, Chengdu, 
Sichuan Province  610064, P.
R. China 
}
\email{liumeiliang@stu.scu.edu.cn,
zhangjun24@stu.scu.edu.cn
}

\author{Hu Zhao}
\address{Zhao (corresponding author): 
School of Mathematics and Statistics, Hainan University, 
Haikou 570228, P.
R. China}
\email{zhmath@hainanu.edu.cn}
\date{}

\keywords{Preprojective algebra, Hamiltonian reduction, Harish--Chandra
homomorphism, rational Cherednik algebra, commuting scheme}
\subjclass[2020]{16S80, 17B35, 14L30, 53D20}

\begin{document}

\begin{abstract}{
For the Jordan quiver, we construct a radial 
quantum trace morphism from the quantum 
preprojective algebra to the algebra of invariant 
differential operators on the Cartan subalgebra. 
Its classical limit recovers the classical trace 
morphism associated with the commuting quotient, 
and we prove that both the quantum and classical 
trace morphisms are surjective.
We further relate noncommutative quantizations of 
the Jordan preprojective algebra to the spherical 
rational Cherednik algebra of type~A. Finally, we 
derive explicit formulas for the Harish--Chandra 
homomorphism
and its radial part in power-sum coordinates.}
\end{abstract}

\maketitle
\tableofcontents
\section{Introduction}

 Quiver varieties play fundamental roles in mathematical physics and geometric representation theory, and the Jordan quiver provides a minimal but particularly important example.
In the unframed setting, it yields the commuting quotient; in the framed setting, the corresponding quiver varieties lead to Hilbert schemes of points on the plane and to Calogero--Moser spaces \cite{Nak1994Ins,Nak1998Qui,EG2002}.
In recent years,
their quantization has become a central problem in symplectic duality \cite{BLPB2016II}.

 Since quiver varieties arise by Hamiltonian reduction, the standard approach to their quantization begins with the Weyl algebra of the representation space and imposes the quantum moment-map relations. This principle is commonly expressed by the phrase ``quantization commutes with reduction.'' Holland proved the corresponding statement for quiver varieties under a flatness hypothesis on the moment map \cite{Hol1999}, which generally fails for the Jordan quiver.

 {On the other hand, a finite quiver \(Q\) determines a preprojective algebra \(\Pi Q\), and the associated quiver varieties may be viewed as moduli spaces of \(\Pi Q\)-representations. The Kontsevich--Rosenberg principle suggests a close relationship between the geometry of these moduli spaces and the noncommutative geometry of the preprojective algebra.}

 On the noncommutative side, no analogous flatness hypothesis is required. Schedler constructed a quantization of the necklace Lie algebra together with quantum trace maps \cite{Sch2005,GinSch2006}. Building on this construction, the last author proved that noncommutative quantization commutes with noncommutative Hamiltonian reduction for quiver algebras\cite{Zhao2021Com,Zhao2023Non}. 
These results are compatible with the Kontsevich--Rosenberg principle and 
reveal a hidden dominance of the noncommutative side over the representation-side quantization.

The failure of flatness in the Jordan case prevents a direct application of the ``quantization commutes with reduction'' theorem and motivates studying the radial-part comparison directly.

\subsection{Main results}

Let $Q$ be the Jordan
quiver, which consists
of one vertex and one
loop. The moment map
on the quiver
representations is
not flat for \(n>1\); nevertheless,
one has \(\mM_n (Q) \cong \rmT^\ast \bbC^n // S_{n}\), and the latter quotient is quantized by the invariant differential-operator algebra \(\mD_\hbar (\bbC^n)^{S_n}\).
On the noncommutative side, there is an algebra \(\Pi Q_{\hbar}\) intended to give quantizations of the
same classical preprojective algebra \(\Pi Q\).

This leads to the following natural question.

\begin{question}\label{que: [q, r] = 0}
For the Jordan quiver \(Q\), does there
exist a canonical morphism
\[
\Pi Q_{\hbar}\longrightarrow \mD_\hbar(\bbC^n)^{S_n}?
\]
\end{question}

By combining Schedler's quantum trace map with the Harish--Chandra homomorphism, we construct a canonical morphism \(\widetilde{\Tr^q}\) and its semiclassical limit \(\tTr\). The following theorem answers Question \ref{que: [q, r] = 0}, proved as Proposition \ref{prop: tilt Tr semiclassical} and Proposition \ref{prop: tTr surj}.

\begin{theorem}
\label{thm: intro qtrace suj}
Let \(Q\) be the Jordan quiver and let \(n\geq1\).
\begin{enumerate}
\item[$(1)$]
The radial quantum trace is a surjective
\(\bbC[[\hbar]]\)-algebra homomorphism
\[
\widetilde{\Tr^q}:
\Pi Q_{\hbar}
\longrightarrow
\mD_\hbar(\fkh)^{S_n}.
\]

\item[$(2)$]
Its semiclassical limit is the surjective Poisson algebra homomorphism
\[
\widetilde{\Tr}:
\Sym((\Pi Q)_\cyc)
\longrightarrow
\bbC[T^*\fkh]^{S_n}
\]
determined by
\[
\widetilde{\Tr}([x^a y^b])
=
\sum_{i=1}^n x_i^a y_i^b.
\]
\end{enumerate}
\end{theorem}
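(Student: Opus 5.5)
We plan to build \(\widetilde{\Tr^q}\) as a composite and then prove surjectivity by passing to the associated graded, where it reduces to the classical statement (2). Schedler's quantum trace gives a \(\bbC[[\hbar]]\)-algebra map \(\Tr^q:\Pi Q_\hbar\to(\mD_\hbar(\gl_n)/\mD_\hbar(\gl_n)\mu(\gl_n))^{\GL_n}\), the quantum Hamiltonian reduction of \(\mD_\hbar(\gl_n)\) at the zero character. It sends a cyclic word in \(x,y\) to the symmetrized trace of the corresponding product of matrix-valued operators. The radial part (Harish--Chandra) homomorphism then maps this reduction to \(\mD_\hbar(\fkh)^{S_n}\). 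This map is the restriction to \(\fkh^{\reg}\) conjugated by the Vandermonde \(\delta\), and it is well defined on the quotient because \(\GL_n\)-invariant operators annihilated by the moment map restrict to invariant operators on \(\fkh\). We set \(\widetilde{\Tr^q}\) equal to this composite; it is a \(\bbC[[\hbar]]\)-algebra homomorphism as a composite of such.

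For the semiclassical limit, reduce modulo \(\hbar\). Schedler's map becomes the classical trace \(\Sym((\Pi Q)_\cyc)\to\bbC[\mu^{-1}(0)]^{\GL_n}\), \([w]\mapsto\tr w(X,Y)\). The Harish--Chandra map becomes restriction of functions to the commuting scheme along \(\fkh\times\fkh\subset\mu^{-1}(0)\), where the \(\delta\)-conjugation contributes nothing at leading order. Evaluating on diagonal \(X,Y\) gives \(\tr(X^aY^b)=\sum_i x_i^ay_i^b\), and for general cyclic words the same formula holds since diagonal matrices commute. Poisson compatibility follows from the standard fact that the classical limit of an algebra map between quantizations preserves the brackets \(\{f,g\}=\hbar^{-1}[\tilde f,\tilde g]\bmod\hbar\).

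Surjectivity in (2) is Weyl's theorem on polarized power sums (multisymmetric functions): in characteristic zero, \(\bbC[x_1,\dots,x_n,y_1,\dots,y_n]^{S_n}\) is generated by the polarized power sums \(p_{a,b}=\sum_i x_i^ay_i^b\). Since \(\tTr\) is an algebra map hitting every \(p_{a,b}\), it is surjective.

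For (1), both sides carry filtrations compatible with \(\hbar\)-adic completeness. On \(\Pi Q_\hbar\) we use word length, and on \(\mD_\hbar(\fkh)^{S_n}\) we use the total degree in \(x_i,\partial_i\), or simply the \(\hbar\)-adic filtration. The map \(\widetilde{\Tr^q}\) reduces modulo \(\hbar\) to \(\tTr\), which is surjective. Because \(\mD_\hbar(\fkh)^{S_n}\) is \(\hbar\)-adically complete and separated and \(\hbar\)-torsion-free, a standard successive-approximation argument lifts surjectivity mod \(\hbar\) to surjectivity over \(\bbC[[\hbar]]\). Concretely, given \(D\), pick a preimage of \(D\bmod\hbar\), subtract its image, divide by \(\hbar\), and iterate; convergence uses completeness of \(\Pi Q_\hbar\).

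The main obstacle is verifying that the radial-part map is well defined on the reduction and that its classical limit really is restriction to \(\fkh\times\fkh\). The Jordan moment map is not flat, so we cannot invoke Holland's theorem. What we would try first is to check directly, on the generators \(\Tr^q(x^ay^b)\), that the \(\delta\)-conjugated restriction yields an element of \(\mD_\hbar(\fkh)^{S_n}\) whose symbol is \(p_{a,b}\). This computation avoids any flatness input and suffices for both the limit and the surjectivity argument.
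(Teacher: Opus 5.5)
Your proposal follows the paper's proof essentially step for step: $\widetilde{\Tr^q}$ is the descended quantum trace composed with the completed Harish--Chandra map, and its reduction mod $\hbar$ is obtained by restricting symbols to $T^*\fkh$. Classical surjectivity comes from generation of $\bbC[T^*\fkh]^{S_n}$ by the polarized power sums, and quantum surjectivity from the same $\hbar$-adic successive-approximation argument. The flatness worry in your last paragraph is unnecessary, since the paper simply invokes Harish--Chandra's regularity theorem and the Levasseur--Stafford factorization through $\mD(\fkg)///\fkg$, and both hold for the adjoint action with no flatness hypothesis (a check on the images of generators would not, by itself, establish well-definedness on the quotient anyway).
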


Surjectivity is an essential part of the result: it shows that every invariant differential operator on the Cartan subalgebra arises from the noncommutative quantum reduction through \(\widetilde{\Tr ^q}\).
Classically, the corresponding statement is the generation of the multisymmetric invariant ring by polarized power sums.

The above theorem leaves a second, independent problem: although \(\widetilde{\Tr ^q},\ \tTr\) are structurally well understood, the radial parts of the mixed invariant differential operators arising from quantum traces are difficult to compute.
The classical results of
Harish--Chandra, Wallach and Levasseur--Stafford describe the kernel and image of the Harish--Chandra
homomorphism \cite{HC1957Dif,HC1964Inv,Wal1993Inv,LevSta1995Inv,LevSta1996Ker}; they do
not, by themselves, rewrite a mixed operator in a form from which its radial
part can be read off.  The basic operators arising from the quantum trace
map are precisely of this mixed type.

Let
\[
X=(x_{ij}),
\qquad
D=(\partial_{x_{ji}})
\]
be respectively the matrix of coordinate functions and the matrix of
\(\hbar\)-Weyl generators on \(\gl_n (\bbC)\).
For \(s,r\geq0\), the results of Schedler \cite{Sch2005} and Zhao \cite{Zhao2021Com,Zhao2023Non} show that the normal-ordered operators \(\Tr^q(X^s D^r)\) are basic building blocks of the quantum Hamiltonian reduction (see Section \ref{subsec: state radial part in powersum}).

Let $\hc$ denote the Harish--Chandra homomorphism and $\hc'$ its radial
part.  Set \(p_a=\sum_{i=1}^n x_i^a\) and
\(\delta=\prod_{i<j}(x_i-x_j)\), and 
let
\(\partial_{p_a}\), for \(1\leq a\leq n\), denote the corresponding
\(\hbar\)-Weyl algebra generators for the coordinates
\(p_1,\ldots,p_n\).
The following theorem combines Theorem \ref{thm: radial part}
with Proposition \ref{prop: hc formula power sums}.

\begin{theorem}[Harish--Chandra formula and its radial part in power sums]
\label{thm:intro-radial-formula}
Let \(\fkg=\gl_n(\bbC)\), and let \(\fkh\) be its Cartan subalgebra
of diagonal matrices. For every \(s\in\bbZ_{\geq0}\) and
\(r\in\bbZ_{\geq1}\),
\[
\hc'\bigl(\Tr^q(X^sD^r)\bigr)
=
\sum_{(\pi,\alpha)}
\hbar^{r-|\pi|}
C_{\pi,\alpha}^{s,r}(p)\,
\partial_p^\alpha,
\]
and
\[
\hc\bigl(\Tr^q(X^sD^r)\bigr)
=
\sum_{(\pi,\alpha)}
\hbar^{r-|\pi|}
C_{\pi,\alpha}^{s,r}(p)
\prod_{B\in\pi}
\left(
\partial_{p_{\alpha(B)}}
-
\frac12
\partial_{p_{\alpha(B)}}(\ln\delta^2)
\right).
\]
In particular, every
coefficient \(C_{\pi,\alpha}^{s,r}(p) \) is a polynomial in the power sums \(p_k\).
\end{theorem}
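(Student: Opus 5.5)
The plan is to compute $\hc\bigl(\Tr^q(X^sD^r)\bigr)$ on the regular semisimple locus of $\fkh$, then read off both formulas. The Harish--Chandra homomorphism is $\hc(P)=\delta\circ\mathrm{Res}(P)\circ\delta^{-1}$ restricted to $\fkh$. Its radial part $\hc'$ is the restriction of an invariant operator to $S_n$-invariant functions on $\fkh$.

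First I would describe the radial part. For $f\in\bbC[\fkh]^{S_n}$, the function $P(\tilde f)$ is evaluated on diagonal matrices, where $\tilde f$ is the Chevalley extension of $f$. Write $\tilde f=F(\Tr X,\Tr X^2,\ldots,\Tr X^n)$ with $F$ a polynomial in the $p_a$. Since $\Tr^q(X^sD^r)$ is normal ordered with all $D$'s to the right, I apply the chain rule repeatedly.

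A single derivative gives
\[
\hbar\,\partial_{x_{ji}}\Tr X^a = a\hbar\,(X^{a-1})_{ij}.
\]
Hence $D$ acting on $F(\Tr X^\bullet)$ produces $\sum_a a\,X^{a-1}\,\hbar\partial_{p_a}F$. Iterating $r$ times and inserting into the trace with $X^s$ in front, Leibniz terms arise in two ways:
\begin{itemize}
\item[(i)] later $D$'s hit further $\Tr X^b$'s, giving more $\partial_p$'s;
\item[(ii)] later $D$'s hit the matrix factors $X^{a-1}$ already produced. Each such hit costs one factor of $\hbar$ without producing a $\partial_p$.
\end{itemize}
Organizing the $r$ copies of $D$ by set partitions $\pi$ of $\{1,\ldots,r\}$, each block $B$ merges into a single derivative $\partial_{p_{\alpha(B)}}$. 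Each block contributes $\hbar$ from its first $D$, and the merged derivatives contribute the exponent $\hbar^{r-|\pi|}$. The remaining quantities are traces of products of powers of $X$, such as $\Tr(X^{s+\sum(a_B-1)-\ldots})$ or products of such traces, with combinatorial coefficients.

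The key identity is the derivative of a matrix power:
\[
\partial_{x_{ji}}(X^m)_{kl}=\sum_{u+v=m-1}(X^u)_{ki}(X^v)_{jl}.
\]
Contracting such indices inside a trace yields only products $\Tr X^u\cdot\Tr X^v$ or single traces $\Tr X^{u+v}$. Evaluated on diagonal $X$, these are polynomials in the $p_k$. This gives $C^{s,r}_{\pi,\alpha}(p)$ as explicit polynomials, and it proves the first formula, since $\hc'(P)f$ depends only on $F$ and is expressed through $\partial_p^\alpha F$. The $\partial_p$ commute, so the ordering is irrelevant after the normal ordering.

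For $\hc$ I would use the standard relation $\hc(P)=\delta\circ\hc'(P)\circ\delta^{-1}$ on invariant functions. Since $\hc(P)$ is a differential operator on $\fkh$ whose coefficients are determined by its action on invariants, I conjugate each $\partial_{p_a}$ by $\delta$. Because $\delta^2$ is a polynomial in the $p$'s, we have $|\delta|=\sqrt{\delta^2}$ and
\[
\delta\,\partial_{p_a}\,\delta^{-1}=\partial_{p_a}-\tfrac12\partial_{p_a}(\ln\delta^2).
\]
Then $\delta\,\partial_p^\alpha\,\delta^{-1}=\prod_{B\in\pi}\bigl(\partial_{p_{\alpha(B)}}-\tfrac12\partial_{p_{\alpha(B)}}\ln\delta^2\bigr)$. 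This gives the second formula, with the factors commuting since they are conjugates of commuting operators. There is a subtlety: $\hc$ should be interpreted as an operator in the $p$-coordinates on the regular locus, in terms of the skew-invariant $\delta$.

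The main obstacle is step two. One must show that the Leibniz expansion closes up with exactly the $\hbar^{r-|\pi|}$ grading and with coefficients that are polynomial in the power sums. The danger is that contracting matrix indices across several blocks produces traces of mixed non-closed words. I would first handle the recursion $r\to r+1$: assuming $\Tr^q(X^sD^r)F=\sum\hbar^{r-|\pi|}\,\mathrm{(matrix\ polynomial\ in\ }X)\,\partial_p^\alpha F$, applying one more $D$ either extends or creates a block. Since $X$ is the only matrix involved, every word is a power of $X$, which settles polynomiality. On the diagonal, $\Tr X^k=p_k$, and for $k>n$ this remains polynomial by Newton's identities.
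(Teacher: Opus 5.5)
Your proposal is correct and follows essentially the paper's argument: Chevalley reduction to $F(\Tr X,\dots,\Tr X^n)$, a set-partition (Fa\`a di Bruno) organization of the normal-ordered derivatives giving $\hbar^{r-|\pi|}$, contraction of the matrix indices into products of power sums, and then $\delta$-conjugation of each $\partial_{p_a}$. The paper packages the derivatives through $M_q(u)=X+\sum_j u_jE_{q_{j+1},q_j}$ and the Constantine--Savits formula, and it makes the contraction explicit via the connected components of $G_{\pi,\mathbf{i}}$; that explicit contraction is what identifies your coefficients with the specific closed-form $C^{s,r}_{\pi,\alpha}$, and your sketch stops short of it.

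One correction: the identity should read $\frac{\partial}{\partial x_{ji}}(X^m)_{kl}=\sum_{u+v=m-1}(X^u)_{kj}(X^v)_{il}$, since your version swaps $i$ and $j$ and would introduce transposes. With the correct form, $\sum_j \frac{\partial}{\partial x_{ji}}(X^m)_{jl}=\sum_{u+v=m-1}\Tr(X^u)\,(X^v)_{il}$, so your matrix-level recursion stays inside polynomials in $X$ with trace coefficients, and the ``non-closed words'' you worry about never arise.
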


Note that power-sum realizations of differential operators are
well established,
see \cite{SerVes2015,Dra2024Gen} for examples.
The new point of the above theorem is a closed-form,
finite-\(n\), and nonrecursive power-sum expansion for arbitrary \(s\) and \(r\), in which coefficients \(C_{\pi,\alpha}^{s,r} (p)\) are
characterized by combinatorial data such as set partitions and graphs.

More broadly,
these formulas provide a concrete starting point and may reveal a structural connection between the noncommutative geometry on quiver algebras
and the deformed \(W_{1+\infty}\), affine-Yangian, and
Calogero--Moser systems \cite{ArbSch2013,KN2016Qua,SchVas2013,DesHal2012}.

The target in Question
\ref{que: [q, r] = 0} is the algebra of invariant 
differential operators on the Cartan subalgebra. 
The type~A rational Cherednik algebras of Etingof--Ginzburg
\cite{EG2002} provide a related family.
For fixed \(c\), the spherical algebra
\(\be\rmH_{\hbar,c}(n)\be\) is a formal deformation of the
commutative algebra \(\be\rmH_{0,c}(n)\be\).
After localization at \(\delta\), the spherical Dunkl
isomorphism identifies this deformation with
\(\mD_\hbar(\fkh_{\reg})^{S_n}\)\cite{EG2002,Eti2009Lec}.
This raises a further natural question.

\begin{question}\label{que: rca}
Does there exist a canonical morphism 
\[
\Pi Q_{\hbar}
\longrightarrow
\be\rmH_{\hbar,c}(n)^{\loc}\be
\]
to the localized (formal) spherical rational Cherednik algebra?
\end{question}

Our answer is as follows,
which is proved as Proposition
\ref{prop: quant preproj to RCA} and Corollary \ref{cor: preproj vs rca}.

\begin{theorem}
\label{thmfrompreprojintro}
Let \(Q\) be the Jordan quiver and let \(n\geq1\).  For every
\(c\in\bbC\), there is a \(\bbC[[\hbar]]\)-algebra homomorphism
\[
\Upsilon_{\hbar,c}:
\Pi Q_{\hbar}
\longrightarrow
\be\rmH_{\hbar,c}(n)^\loc\be
\]
which factors through the quantum trace map \(\Tr^q\);
its semiclassical limit is a Poisson algebra morphism
\[
\Upsilon_{0,c}:
\Sym((\Pi Q)_\cyc)
\longrightarrow
\be\rmH_{0,c}(n)^\loc\be
\]
which factors through the classical trace map \(\Tr\).
When \(c=0\), their images lie in the nonlocalized spherical subalgebra.
\end{theorem}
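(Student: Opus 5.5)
The map will be the composite of three morphisms: Schedler's quantum trace $\Tr^q$, the Harish--Chandra radial part, and the inverse of the spherical Dunkl embedding. The semiclassical statement is then the $\hbar\to0$ limit of the same composite.

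\emph{Step 1.} Apply Schedler's quantum trace \cite{Sch2005,GinSch2006} in the form used in \cite{Zhao2021Com,Zhao2023Non}. This gives a $\bbC[[\hbar]]$-algebra map $\Tr^q:\Pi Q_\hbar\to(\mD_\hbar(\gl_n)/\mD_\hbar(\gl_n)\mu_\hbar(\gl_n))^{\GL_n}$, where $\mu_\hbar$ denotes the quantum moment map. Compose it with the Harish--Chandra homomorphism $\hc$, whose image lies in $\mD_\hbar(\fkh_\reg)^{S_n}$. Conjugating by $\delta$ in the standard way, $\hc$ is identified with the radial part $\hc'$, which lands in $\mD_\hbar(\fkh)^{S_n}$. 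This identification is what gives $\widetilde{\Tr^q}$ of Theorem \ref{thm: intro qtrace suj}.

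\emph{Step 2.} Recall the spherical Dunkl isomorphism $\Theta_c:\be\rmH_{\hbar,c}(n)^\loc\be\xrightarrow{\sim}\mD_\hbar(\fkh_\reg)^{S_n}$ \cite{EG2002,Eti2009Lec}. It is given by $x_i\mapsto x_i$, $y_i\mapsto$ Dunkl operators, followed by restriction to invariants. Define
\[
\Upsilon_{\hbar,c}=\Theta_c^{-1}\circ\mathrm{Ad}_{\delta^{c'}}\circ\hc\circ\Tr^q ,
\]
where $\mathrm{Ad}_{\delta^{c'}}$ is conjugation by a suitable power $\delta^{c'}$ with $c'$ depending on $c$ and $\hbar$. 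The role of this twist is to match the Harish--Chandra normalization ($\delta$-conjugation, shift parameter $c=1$ in Calogero normalization) with the Dunkl normalization at parameter $c$.

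Conjugation by $\delta^{\lambda}$ is an automorphism of $\mD_\hbar(\fkh_\reg)^{S_n}$ for any $\lambda$. This holds formally: $\delta^{-\lambda}\partial_i\delta^{\lambda}=\partial_i+\hbar\lambda\,\partial_i\ln\delta$ makes sense because $\ln\delta$ has rational derivatives. Hence $\Upsilon_{\hbar,c}$ is a well-defined $\bbC[[\hbar]]$-algebra homomorphism, and it factors through $\Tr^q$ by construction.

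\emph{Step 3 (semiclassical limit).} Reduce modulo $\hbar$. Then $\Tr^q$ becomes the classical trace $\Tr:\Sym((\Pi Q)_\cyc)\to\bbC[\mu^{-1}(0)]^{\GL_n}$, and $\hc$ becomes restriction to $\fkh\times\fkh$, which is the map $\tTr$ of Theorem \ref{thm: intro qtrace suj}(2). The twist $\mathrm{Ad}_{\delta^{c'}}$ tends to $y_i\mapsto y_i+c\sum_{j\ne i}(x_i-x_j)^{-1}$. The Dunkl map tends to the classical Dunkl embedding $\be\rmH_{0,c}^\loc\be\cong\bbC[T^*\fkh_\reg]^{S_n}$.

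Each factor is Poisson, being the leading term of an algebra morphism of flat $\hbar$-deformations, so the composite $\Upsilon_{0,c}$ is a Poisson morphism. It factors through $\Tr$ because reduction modulo $\hbar$ commutes with the composition.

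\emph{Step 4 ($c=0$).} At $c=0$ the Dunkl operators are $\partial_i$. Then $\be\rmH_{\hbar,0}\be=\mD_\hbar(\fkh)^{S_n}$ (respectively $\bbC[T^*\fkh]^{S_n}$), and no twist is needed. By Theorem \ref{thm: intro qtrace suj} the image already lies in $\mD_\hbar(\fkh)^{S_n}$, i.e.\ in the nonlocalized spherical subalgebra.

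\emph{Main obstacle.} I expect the hardest part to be bookkeeping the normalizations. One must verify that the radial-part twist and the Dunkl $c$-twist are both conjugations by powers of $\delta$ commuting with $S_n$. One must also check that these conjugations preserve $\hbar$-adic integrality, so that the limit $\hbar\to0$ makes sense. The point to check is that $\delta^{-\lambda}\partial\delta^{\lambda}$ has $\hbar$-divisible correction terms: $\hbar\lambda\,\partial\ln\delta$ if $\lambda$ is $\hbar$-independent, or $c\,\partial\ln\delta$ when $\lambda=c/\hbar$.
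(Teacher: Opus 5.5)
Your construction is essentially the paper's. There, $\Upsilon_{\hbar,c}:=(\widetilde\Theta^{\sph}_{\hbar,c})^{-1}\circ\iota\circ\widetilde{\Tr^q}$ with $\widetilde{\Tr^q}=\hc\circ\Tr^q$, and $\Upsilon_{0,c}$ is the analogous composite $(\widetilde\Theta^{\sph}_{0,c})^{-1}\circ\iota_0\circ\tTr$. Poissonness and the $c=0$ clause are argued as in your Steps 3--4, and no twist appears. Two points in Steps 1--2 need correcting, and both affect the $c=0$ clause. Once they are fixed, your argument coincides with the paper's.

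\emph{Step 1 reverses $\hc$ and $\hc'$.} The radial part $\hc'$ is the map with poles. For instance, $\hc'(\Tr^q(D^2))=\sum_i\partial_{x_i}^2+2\hbar\sum_{i\neq j}(x_i-x_j)^{-1}\partial_{x_i}$. It is $\hc=\delta\,\hc'\,\delta^{-1}$ that lands in $\mD_\hbar(\fkh)^{S_n}$, by Harish--Chandra's theorem; here $\hc(\Tr^q(D^2))=\sum_i\partial_{x_i}^2$. This is not a harmless relabeling: the claim that $\Upsilon_{\hbar,0}$ needs no localization rests precisely on composing with the regular map $\hc$.

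\emph{The twist $\mathrm{Ad}_{\delta^{c'}}$ is unnecessary and, as written, underspecified.}
\begin{itemize}
\item The theorem only asks for some homomorphism, and $\widetilde\Theta^{\sph}_{\hbar,c}$ is already an isomorphism onto $\mD_\hbar(\fkh_{\reg})^{S_n}$ for every $c$. So there is no normalization to match.
\item The spherical Dunkl images for different $c$ are not related by conjugation by powers of $\delta$ anyway. Conjugating $\sum_i\partial_{x_i}^2$ by $\delta^{\lambda}$ creates the potential $\hbar^2\lambda(\lambda-1)\sum_{i\neq j}(x_i-x_j)^{-2}$. By contrast, the Dunkl image of $\be\bigl(\sum_i y_i^2\bigr)\be$ is $\sum_i\partial_{x_i}^2-2c\sum_{i\neq j}(x_i-x_j)^{-1}\partial_{x_i}$, which has no potential term.
\item Any $c'\in\hbar^{-1}\bbC[[\hbar]]$ still gives an $S_n$-equivariant automorphism, since $w\delta=\pm\delta$ makes $d\ln\delta$ invariant. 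So existence survives.
\item Step 4, however, needs the twist to be trivial at $c=0$. Your motivation suggests choosing $c'$ to absorb the Harish--Chandra normalization you associate with $c=1$, which gives $c'|_{c=0}\neq0$. Then the image leaves $\mD_\hbar(\fkh)^{S_n}$: for example, $\delta^{-1}\bigl(\sum_i\partial_{x_i}^2\bigr)\delta$ is the singular operator above. In that case the $c=0$ clause fails.
\item With $c'=c/\hbar$, which your Step 3 implicitly uses, everything is consistent, but the twist buys nothing and is best deleted.
\end{itemize}
Finally, for $c'=c/\hbar$ the correction term is $c\sum_{j\neq i}(x_i-x_j)^{-1}$. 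It has no negative powers of $\hbar$, but it is not $\hbar$-divisible, contrary to your last paragraph. That is exactly why it survives modulo $\hbar$ as the shift you describe in Step 3.
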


Theorem
\ref{thm:intro-radial-formula} expresses the images of operators in power-sum coordinates;
it is natural to ask for a corresponding formula for \(\Upsilon_{\hbar,c}\)
in the same coordinates.
Set
\[
G(p)_{a,b}:=p_{a+b-2}, \quad \Xi_{p_a}
:=
\frac1a\sum_{b=1}^n
\bigl(G(p)^{-1}\bigr)_{a,b}
\be\left(\sum_{i=1}^n x_i^{b-1}y_i\right)\be.
\]
We compute the inverse spherical Dunkl isomorphism in the same power-sum coordinates and then derive an explicit formula for \(\Upsilon_{\hbar, c}\); these results appear as Proposition \ref{prop: inverse dunkl power sum} and Theorem \ref{thm: Upsilon in power-sum}.

\begin{theorem}
	\begin{enumerate}
		\item For every \(c\in\bbC\), the assignment
		\begin{equation*}
			p_a\mapsto p_a\be,
			\qquad
			\delta^{-2}\mapsto\delta^{-2}\be,
			\qquad
			\partial_{p_a}\mapsto\Xi_{p_a},
			\qquad 1\leq a\leq n,
		\end{equation*}
		extends uniquely to a  \(\bbC[[\hbar]]\)-algebra morphism
		\[
		\Phi_{\hbar,c}:
		\mD_\hbar(\fkh_{\reg})^{S_n}
		\longrightarrow
		\be\rmH_{\hbar,c}(n)^\loc\be.
		\]
Furthermore, it is the inverse of the spherical Dunkl isomorphism in Proposition \ref{prop: dunkl embed loc neq 0}.

\item For \(c\in\bbC\), \(s\in\bbZ_{\geq0}\), and
\(r\in\bbZ_{\geq1}\),
\begin{equation*}
\Upsilon_{\hbar,c}(\overline w_{s,r})
=\sum_{(\pi,\alpha)}
			\hbar^{r-|\pi|}C_{\pi,\alpha}^{s,r}(p)\be
			\prod_{B\in\pi}
			\left(
			\Xi_{p_{\alpha(B)}}
			-
			\frac{1}{2}
			\partial_{p_{\alpha(B)}}(\ln \delta^2)\be
			\right).
\end{equation*}
For \(r=0\),
\[
\Upsilon_{\hbar,c}(\overline w_{s,0})
=p_s\be.
\]
\end{enumerate}
\end{theorem}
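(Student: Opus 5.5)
Part (1) is a statement about the spherical Dunkl isomorphism, and part (2) is obtained by transporting the power-sum formula of Theorem \ref{thm:intro-radial-formula} through $\Phi_{\hbar,c}$. I would prove them in that order.

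\textbf{Part (1).} Let $\Theta:\be\rmH_{\hbar,c}(n)^\loc\be\to\mD_\hbar(\fkh_{\reg})^{S_n}$ denote the spherical Dunkl isomorphism of Proposition \ref{prop: dunkl embed loc neq 0}.
\begin{enumerate}
\item[(a)] Since $p_1,\dots,p_n$ are algebraically independent and form a coordinate system on $\fkh_{\reg}/S_n$, the algebra $\mD_\hbar(\fkh_{\reg})^{S_n}$ is the localized $\hbar$-Weyl algebra in $p_a,\partial_{p_a}$, inverted at the discriminant $\delta^2$. Hence it is generated by $p_a$, $\delta^{-2}$, $\partial_{p_a}$, subject only to the Weyl relations and the relation $\delta^{2}\cdot\delta^{-2}=1$. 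Here $\delta^2$ is a polynomial in the $p_a$.
\item[(b)] Because $\Theta$ is an algebra isomorphism, it suffices to check that $\Theta$ sends $p_a\be\mapsto p_a$, $\delta^{-2}\be\mapsto\delta^{-2}$ and $\Xi_{p_a}\mapsto\partial_{p_a}$. Then $\Phi_{\hbar,c}:=\Theta^{-1}$ satisfies the stated assignment. Existence, uniqueness (the targets are values on generators) and the inverse property all follow at once, so no relations need to be verified directly in the Cherednik algebra.
\item[(c)] The first two identities hold because Dunkl embedding is the identity on $\bbC[\fkh]^{S_n}$ and its localization.
\item[(d)] For the third, compute $\Theta(\be\sum_i x_i^{b-1}y_i\be)$. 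Acting on symmetric functions, Dunkl operators reduce to $\hbar\partial_{x_i}$ plus reflection terms, and the reflection terms vanish on invariants. The spherical element therefore maps to the invariant vector field $\sum_i x_i^{b-1}\partial_{x_i}$, up to the normalization of $\hbar$ used in the paper. For $c\neq0$ one must also include the $c$-dependent conjugation by $\delta^{c}$ built into the paper's convention for $\Theta$; I would follow whatever zeroth-order correction appears there.
\item[(e)] Apply the chain rule, $\partial_{x_i}=\sum_a a x_i^{a-1}\partial_{p_a}$. This gives
\[
\sum_i x_i^{b-1}\partial_{x_i}=\sum_{a}a\,p_{a+b-2}\,\partial_{p_a}=\sum_a G(p)_{b,a}\,a\,\partial_{p_a}.
\]
\item[(f)] The Gram matrix $G(p)=V^{T}V$, with $V$ the Vandermonde matrix, has determinant $\delta^2$, so it is invertible after localization. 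Inverting the relation in (e) gives $\Theta(\Xi_{p_a})=\partial_{p_a}$.
\end{enumerate}

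\textbf{Part (2).}
\begin{enumerate}
\item[(a)] By construction (Proposition \ref{prop: quant preproj to RCA}), $\Upsilon_{\hbar,c}$ factors as $\Phi_{\hbar,c}\circ\hc'\circ\Tr^q$, possibly composed with a conjugation by $\delta$ that is absorbed into the definition of $\hc$ versus $\hc'$. Here $\overline w_{s,r}$ is the class whose quantum trace is $\Tr^q(X^sD^r)$.
\item[(b)] Theorem \ref{thm:intro-radial-formula} gives $\hc(\Tr^q(X^sD^r))$ as $\sum\hbar^{r-|\pi|}C^{s,r}_{\pi,\alpha}(p)\prod_B(\partial_{p_{\alpha(B)}}-\tfrac12\partial_{p_{\alpha(B)}}\ln\delta^2)$.
\item[(c)] Apply the algebra map $\Phi_{\hbar,c}$ factor by factor. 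It sends $C(p)\mapsto C(p)\be$ and $\partial_{p_a}\mapsto\Xi_{p_a}$. The functions $\partial_{p_a}\ln\delta^2=(\partial_{p_a}\delta^2)\,\delta^{-2}$ are polynomials in $p$ times $\delta^{-2}$, so they map to the same functions times $\be$. This yields the stated formula.
\item[(d)] For $r=0$, $\Tr^q(X^s)=\Tr X^s$ maps to $p_s$ under $\hc$, and hence to $p_s\be$.
\end{enumerate}

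The main obstacle is step (d) of Part (1): pinning down precisely which normalization of the Dunkl isomorphism, and which $c$-dependent twist, the paper uses in Proposition \ref{prop: dunkl embed loc neq 0}. The generator $\partial_{p_a}$ must correspond to $\Xi_{p_a}$ exactly, with no $c/\delta$ shift, for the $-\tfrac12\partial\ln\delta^2$ terms to appear exactly as stated in Part (2). I would first verify this convention carefully for $n=2$.
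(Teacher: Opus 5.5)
Your proposal is correct and follows the paper's proof. The paper likewise reduces part (1) to $\widetilde\Theta^{\sph}_{\hbar,c}(\Xi_{p_a})=\partial_{p_a}$, using $(s_{ij}-1)\be=0$, the chain rule and inversion of $G(p)$, and obtains part (2) by applying $\Phi_{\hbar,c}$ to the Harish--Chandra formula; it also checks $[\Xi_{p_a},F\be]=\partial_{p_a}(F)\be$ directly in the Cherednik algebra, a step your definition $\Phi_{\hbar,c}:=(\widetilde\Theta^{\sph}_{\hbar,c})^{-1}$ makes unnecessary. The obstacle you flag does not arise: the Dunkl operators in \eqref{eq: dunkl} carry no $\delta^{c}$-conjugation, so $D_i\be=\partial_{x_i}\be$ holds exactly and no $c$-dependent correction appears.
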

Here, \(\overline w_{s,r}\)
are topological generators of \(\Pi Q_{\hbar}\).

\subsection{Organization
of the paper}

The rest of the paper is organized
as follows.
Section \ref{sect:NPG} reviews double Poisson brackets, noncommutative
Hamiltonian reduction, and their realization on representation spaces,
with particular attention to doubled quivers.  Section
\ref{sec: noncom Ham red} recalls Schedler's quantum necklace algebra,
Holland's representation-side quantum Hamiltonian reduction, and the
noncommutative quantization-reduction theorem; it also isolates the
Jordan-quiver flatness obstruction.  Section \ref{sec: quant mor} constructs
\(\widetilde{\Tr^q}\), proves the classical and quantum surjectivity
statements, and derives the radial-part and Harish--Chandra formulas in
power-sum coordinates.  Section \ref{sec: comparison} constructs
\(\Upsilon_{\hbar,c}\), computes the inverse spherical Dunkl isomorphism in
power-sum coordinates, and gives explicit formulas for the images of quantum
trace operators in the localized spherical rational Cherednik algebra.
Appendix \ref{app: fdb} records the multivariate Fa\`a di Bruno formula used
in the radial-part computation, and Appendix \ref{sect:examples} works out
the graph-to-power-sum mechanism in representative examples.

\begin{ack}
We thank Xiaojun Chen and Zhuyang Li for helpful discussions.
This work was partially supported by the National Natural Science Foundation of China under Grants No.~12271377 and No.~12531003.
\end{ack}

\section{Noncommutative Poisson geometry}
\label{sect:NPG}

The frameworks of noncommutative Poisson and symplectic geometry used in this paper were developed by Van den Bergh and by Crawley--Boevey--Etingof--Ginzburg in \cite{Van2008Double,CBEG2007}, respectively.
This section recalls the definitions and results needed later.

\subsection{Noncommutative Hamiltonian reduction}
Let \(I\) be a finite index set and set
\[
R=\bigoplus_{i\in I}\bbC e_i,
\]
where the \(e_i\) are pairwise orthogonal idempotents.
We abbreviate \(-\otimes_\bbC-\) by \(-\otimes-\).

\begin{definition}[Van den Bergh]
\label{def: dpois bracket}
Let \(A\) be an \(R\)-algebra. A \emph{double Poisson bracket} on \(A\) is an \(R\)-bilinear map
\[
\ldb-,-\rdb\colon A\otimes A\to A\otimes A
\]
such that, for all \(a,b,c\in A\),
\begin{enumerate}
\item \(\ldb a,b\rdb=-\ldb b,a\rdb^\circ\);

\item \(\ldb a,bc\rdb=\ldb a,b\rdb c+b\ldb a,c\rdb\);

\item \(\ldb a,\ldb b,c\rdb\rdb_L+(132)\ldb c,\ldb a,b\rdb\rdb_L+(123)\ldb b,\ldb c,a\rdb\rdb_L=0\).
\end{enumerate}
\end{definition}

Here, we write
\({\ldb b,a\rdb\in A\otimes A}\)
as \(\ldb b,a\rdb'\otimes\ldb b,a\rdb''\), with summation understood, and set
\(
\ldb b,a\rdb^\circ=\ldb b,a\rdb''\otimes\ldb b,a\rdb'
\).
Moreover, \(\ldb a,-\rdb_L\) denotes the operation obtained by applying the bracket to the first tensor factor;
and \((123)\) and \((132)\) act by permuting the three tensor factors.
{A crucial fact is that a double Poisson bracket on \(A\)
induces a Lie bracket on the commutator quotient.
Set
\[
\{a,b\}:=\bm(\ldb a,b\rdb)=\ldb a,b\rdb'\,\ldb a,b\rdb'' ,
\]
where $\bm$ is the multiplication on $A$.}

\begin{proposition}[{\cite[Corollary 2.4.6]{Van2008Double}}]
\label{prop: induced Lie bracket}
Let \((A,\ldb-,-\rdb)\) be a double Poisson algebra. Then the commutator quotient space
\(A_\cyc:=A/[A,A]\) inherits a Lie algebra structure via the bracket \(\{-,-\}\).
\end{proposition}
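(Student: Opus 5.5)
We must show that \(\{a,b\}:=\bm(\ldb a,b\rdb)\) descends to a well-defined bilinear operation on \(A_\cyc=A/[A,A]\) and that it is a Lie bracket there. The plan has three steps: show the bracket kills commutators in the second slot, obtain skew-symmetry on the quotient (which also handles the first slot), and then derive Jacobi from the double Jacobi identity. Throughout, \(\bar{(\cdot)}\) denotes the class in \(A_\cyc\).

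\textbf{Second slot.} For fixed \(a\), the Leibniz rule (2) gives
\[
\{a,bc\}=\ldb a,b\rdb' \ldb a,b\rdb'' c + b\,\ldb a,c\rdb'\ldb a,c\rdb'' = \{a,b\}c+b\{a,c\}.
\]
So \(\{a,-\}\) is a derivation of \(A\). In particular \(\{a,[b,c]\}=[\{a,b\},c]+[b,\{a,c\}]\in[A,A]\), and \(\{a,-\}\) preserves \([A,A]\).

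\textbf{Skew-symmetry and first slot.} From (1),
\[
\{b,a\}=-\bm(\ldb a,b\rdb^\circ)=-\ldb a,b\rdb''\ldb a,b\rdb' .
\]
This differs from \(-\ldb a,b\rdb'\ldb a,b\rdb''=-\{a,b\}\) by a sum of commutators. Hence \(\{a,b\}\equiv-\{b,a\}\) modulo \([A,A]\). Combining this with the previous step, for \(a\in[A,A]\) we get \(\{a,b\}\equiv-\{b,a\}\in[A,A]\). Therefore the map
\[
A_\cyc\times A_\cyc\to A_\cyc,\qquad (\bar a,\bar b)\mapsto\overline{\{a,b\}},
\]
is well defined, bilinear and skew-symmetric.

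\textbf{Jacobi identity.} This is the main obstacle, since one must pass from the triple bracket identity (3) in \(A^{\otimes3}\) to a statement in \(A\). I would follow Van den Bergh's route.

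1. Introduce the mixed bracket
\[
\{a,u\otimes v\}:=\{a,u\}\otimes v+u\otimes\{a,v\}.
\]
Check the compatibility
\[
\{a,\ldb b,c\rdb\}-\ldb b,\{a,c\}\rdb=\ldb a,\ldb b,c\rdb\rdb_L+(132)\ldb c,\ldb a,b\rdb\rdb_L \quad(\text{after multiplication}),
\]
or more precisely Van den Bergh's Proposition 2.4.2: multiplying the double Jacobi identity (3) by the map \(u\otimes v\otimes w\mapsto u\otimes vw\) yields
\[
\{a,\ldb b,c\rdb\}-\ldb\{a,b\},c\rdb-\ldb b,\{a,c\}\rdb=0 .
\]
Here the middle term uses (1) together with the skew-symmetry bookkeeping for the \((123)\) and \((132)\) terms. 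This is a routine but careful expansion in Sweedler notation. I would write each of the three terms of (3) explicitly, apply \(\id\otimes\bm\), and match them using (1) and (2).

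2. Apply \(\bm\) to this identity. Since \(\{a,-\}\) is a derivation, \(\bm\circ\{a,-\}=\{a,-\}\circ\bm\) on \(A\otimes A\). This gives
\[
\{a,\{b,c\}\}=\{\{a,b\},c\}+\{b,\{a,c\}\},
\]
already in \(A\). Passing to \(A_\cyc\) and using skew-symmetry turns this into the Jacobi identity.
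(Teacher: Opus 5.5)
Your overall route is the argument behind the cited result; the paper itself gives no proof, only the reference to Van den Bergh. The following steps are all correct:
\begin{itemize}
\item $\{a,-\}$ is a derivation of $A$, so it preserves $[A,A]$.
\item Skew-symmetry modulo $[A,A]$ then handles the first slot. Van den Bergh instead notes that $\{xy-yx,c\}=0$ holds exactly in $A$, by the inner Leibniz rule $\ldb xy,c\rdb=x*\ldb y,c\rdb+\ldb x,c\rdb*y$, but your argument suffices.
\item Applying $\bm$ to $\{a,\ldb b,c\rdb\}=\ldb\{a,b\},c\rdb+\ldb b,\{a,c\}\rdb$ gives the Loday identity, and Jacobi on $A_\cyc$ follows by skew-symmetry.
\end{itemize}

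The step that fails as written is your justification of that derivation identity. Contracting a single instance of (3) with $u\otimes v\otimes w\mapsto u\otimes vw$ cannot produce it. Put $\tau(u\otimes v\otimes w)=w\otimes u\otimes v$ and
\[
J(a,b,c)=\ldb a,\ldb b,c\rdb\rdb_L+\tau\ldb b,\ldb c,a\rdb\rdb_L+\tau^2\ldb c,\ldb a,b\rdb\rdb_L,
\]
which is the left side of (3) in Van den Bergh's normalization. Then:
\begin{itemize}
\item $\{a,\ldb b,c\rdb\}=(\bm\otimes\id)\ldb a,\ldb b,c\rdb\rdb_L+(\id\otimes\bm)\ldb a,\ldb b,c\rdb\rdb_R$, where $\ldb a,u\otimes v\rdb_R=u\otimes\ldb a,v\rdb$.
\item By (1), $\ldb a,\ldb b,c\rdb\rdb_R=-\tau\ldb a,\ldb c,b\rdb\rdb_L$.
\item Similarly, $\ldb b,\{a,c\}\rdb$ (by (2)) and $\ldb\{a,b\},c\rdb$ (by the inner Leibniz rule, then (1)) each split into one $\bm\otimes\id$ piece and one $\id\otimes\bm$ piece.
\end{itemize}
The six resulting terms regroup as
\[
\{a,\ldb b,c\rdb\}-\ldb\{a,b\},c\rdb-\ldb b,\{a,c\}\rdb
=(\bm\otimes\id)\,J(a,b,c)-(\id\otimes\bm)\,\tau J(a,c,b).
\]
Up to notation, this is Van den Bergh's Proposition~2.4.2.

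So you need the double Jacobi identity for both orderings $(a,b,c)$ and $(a,c,b)$, with different contractions. The three terms coming from $J(a,c,b)$ cannot be obtained by contracting $J(a,b,c)$ alone. Your intermediate display $\{a,\ldb b,c\rdb\}-\ldb b,\{a,c\}\rdb=\cdots$ is also incorrect and should be discarded. With this two-Jacobiator identity in place of your Step 1, the proof goes through.
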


The commutator quotient \(A_\cyc=A/[A,A]\) is \(\HH_0(A)\).  The Lie bracket above is the structure
induced by the chosen double Poisson bracket, and it is the
noncommutative Poisson structure used throughout this paper.

We now turn to noncommutative Hamiltonian reduction.

\begin{definition}[Crawley--Boevey--Etingof--Ginzburg, Van den Bergh]\label{def: nc moment map}
Let \((A,\ldb-,-\rdb)\) be a double Poisson algebra. A \emph{noncommutative moment map} is an element \(\bw=\sum_{i\in I}\bw_i\in\bigoplus_{i\in I}e_iAe_i\) such that, for every \(p\in A\),
\[
\ldb\bw,p\rdb=\sum_{i\in I}\bigl(pe_i\otimes e_i-e_i\otimes e_ip\bigr).
\]
\end{definition}

A \emph{noncommutative Hamiltonian space} is a double Poisson algebra endowed with a noncommutative moment map.

\begin{definition}[Crawley--Boevey--Etingof--Ginzburg, Van den Bergh]\label{def: nc ham red}
For a noncommutative Hamiltonian space \((A,\ldb-,-\rdb,\bw)\), its \emph{noncommutative Hamiltonian reduction} is the quotient algebra
\[
A_\bw:=A/A\bw A.
\]
\end{definition}

The following proposition is the 
noncommutative analogue of the fact that Hamiltonian 
reduction produces a Poisson structure on the reduction 
space.

\begin{proposition}
[{\cite[Proposition~2.6.5]{Van2008Double}, \cite[Theorem 7.2.3]{CBEG2007}}]
\label{prop: noncom red}
Let \((A,\ldb-,-\rdb,\bw)\) be a noncommutative 
Hamiltonian space.
Then the Lie bracket on \(A_\cyc\) descends to 
\((A_\bw)_\cyc\), and the projection 
\(A_\cyc\to(A_\bw)_\cyc\) is a Lie algebra homomorphism.
\end{proposition}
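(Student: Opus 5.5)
The plan has two parts. First show that the bracket $\{-,-\}$ on $A_\cyc$ descends to $(A_\bw)_\cyc$. Then the homomorphism claim is automatic: the descended bracket is defined by choosing lifts, so the projection is a Lie map by construction.

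Write $\pi:A\to A_\bw$ for the quotient map. Since $\pi$ is surjective, $(A_\bw)_\cyc = A/([A,A]+A\bw A)$, which is the quotient of $A_\cyc$ by the image $J$ of the two-sided ideal $A\bw A$. Because $\{-,-\}$ is a Lie bracket on $A_\cyc$ by Proposition \ref{prop: induced Lie bracket}, it suffices to show $\{A_\cyc, J\}\subseteq J$. By skew-symmetry of the induced bracket, checking one side is enough.

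Elements of $A\bw A$ are sums of $a\bw b$, and modulo commutators $a\bw b\equiv \bw ba$. So $J$ is spanned by classes of $\bw c$ with $c\in A$, and it remains to show $\{p,\bw c\}\in [A,A]+A\bw A$ for every $p\in A$.

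\begin{enumerate}
\item Apply the Leibniz rule (axiom (2) of Definition \ref{def: dpois bracket}) in the second slot:
\[
\ldb p,\bw c\rdb=\ldb p,\bw\rdb c+\bw\ldb p,c\rdb .
\]
Here the outer bimodule structure on $A\otimes A$ is used, i.e.\ $\bw(u\otimes v)=\bw u\otimes v$ and $(u\otimes v)c=u\otimes vc$.

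\item Multiply out. The second term gives $\bw\ldb p,c\rdb'\ldb p,c\rdb''\in A\bw A$.

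\item For the first term, use axiom (1) together with Definition \ref{def: nc moment map}:
\[
\ldb p,\bw\rdb=-\ldb\bw,p\rdb^\circ=-\sum_i\bigl(e_i\otimes pe_i - e_ip\otimes e_i\bigr).
\]
Multiplying by $c$ on the right and applying $\bm$ gives
\[
-\sum_i\bigl(e_ipe_ic-e_ipe_ic\bigr)=0 .
\]
\end{enumerate}

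Hence $\{p,\bw c\}\in A\bw A$, and so $J$ is a Lie ideal of $A_\cyc$. The quotient $(A_\bw)_\cyc=A_\cyc/J$ therefore inherits the Lie bracket, and the projection is a Lie algebra homomorphism.

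The only delicate point is the bookkeeping in step 3: the outer bimodule convention and the $\circ$-flip must be applied correctly so that the two moment-map terms cancel exactly. I would double-check this against Van den Bergh's conventions. Even if the signs came out differently, each term has the form $e_ipe_ic$ up to a cyclic rotation, so the result would still lie in $[A,A]$, and the argument does not depend on the exact cancellation.
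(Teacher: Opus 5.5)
Your proof is correct: the derivation property of $\{p,-\}$ together with $\{p,\bw\}=\bm\ldb p,\bw\rdb=0$ shows that $\{p,\bw c\}=\bw\{p,c\}\in A\bw A$. This is the standard argument behind the cited Van den Bergh result, which the paper quotes rather than reproving. One caution: drop your closing fallback remark, which is false (if the two terms had not cancelled you would be left with something like $2e_ipe_ic$, which need not lie in $[A,A]$); it is not needed, because the cancellation does hold exactly under the paper's conventions.
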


Whenever an algebra \(B\) is obtained from \(A\) by a Hamiltonian-reduction procedure, we use the notation
\[
\xymatrix{A\ar@{-->}[r]&B.}
\]

\subsection{Representation spaces
and Kontsevich--Rosenberg principle}

The Kontsevich--Rosenberg principle asserts that a noncommutative geometric structure on an associative algebra should induce its classical counterpart on the algebra's representation spaces.
To formulate the instances needed here, let \(A\) be a noncommutative \(R\)-algebra and let \(V\) be a finite-dimensional \(R\)-module.
Its representation space is
\[
\Rep^A _V:=\Hom_{\Alg_R} \bigl(A,\End_{\bbC}(V)\bigr),
\]
{with the conjugation action of \( \mathrm{Aut}_R (V)\).}
In this paper, we take
\[
V=\bbC^\bd:=\bigoplus_{i\in I}\bbC^{d_i}
\]
for a dimension vector \(\bd=(d_i)\), and write \(\Rep^A_\bd\).
In this case, \({\rm Aut}_R (V) = \GL_\bd(\bbC):=\prod_{i\in I}\GL_{d_i}(\bbC)\), with Lie algebra \(\gl_\bd(\bbC):=\bigoplus_{i\in I}\gl_{d_i}(\bbC)\).
For \(a\in A\), the matrix coefficients
\[
(a)_{p,q}\colon \Rep^A_\bd\to \bbC,
\qquad
\rho\tos \rho(a)_{p,q},
\]
will be denoted by \(a_{p,q}\).
The coordinate ring \(A_\bd:=\bbC[\Rep^A_\bd]\) is generated by these matrix coefficients; see \cite[Proposition~12.1.6]{Gin2005non}.
An important notion is the {classical trace map}:
\[
\Tr:
A
\longrightarrow
A_\bd,
\qquad
a\longmapsto
\bigl( \rho \longmapsto \operatorname{tr} ( \rho(a) )\bigr).
\]
It is clear that \(\Tr\) vanishes on commutators and descends to \(A_\cyc\).

The following theorem records the compatibility of Definitions \ref{def: dpois bracket}, \ref{def: nc moment map}, and \ref{def: nc ham red} with the Kontsevich--Rosenberg principle.

\begin{theorem}
\begin{enumerate}
\item[$(1)$]
If \((A,\ldb-,-\rdb)\) is a double Poisson algebra, then, for every dimension vector \(\bd\), \(\mathrm{Rep}_{\mathbf{d}}^A\) is a Poisson scheme. The induced Poisson bracket on \(A_\bd\) is given by
\begin{equation*}
\{a_{ij},b_{uv}\}=\ldb a,b\rdb'_{u,j}\,\ldb a,b\rdb''_{i,v},
\end{equation*}
for \(a,b\in A\).

\item[$(2)$]
If, in addition, \((A,\ldb-,-\rdb,\bw)\) is a noncommutative Hamiltonian space, then \(\Rep^A_\bd\) is a Poisson \(\GL_\bd(\bbC)\)-space with moment map
\[
\mu\colon\Rep^A_\bd\to\gl_\bd(\bbC)^*,
\qquad
\rho\tos\tr\bigl(\rho(\bw)\,\cdot-\bigr).
\]

\item[$(3)$]
One has \(\Rep^{A_\bw}_\bd=\mu^{-1}(0)\) as affine schemes. Consequently,
\[
\Rep^{A_\bw}_\bd//\GL_\bd(\bbC)
=
\mu^{-1}(0)//\GL_\bd(\bbC),
\]
that is, the categorical quotient on the left is the Hamiltonian reduction of \(\Rep^A_\bd\) at zero.
\end{enumerate}
\end{theorem}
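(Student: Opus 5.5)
The proof works directly with matrix coefficients on \(\Rep^A_\bd\) and checks each item by a local computation on generators, after which the Leibniz rules propagate the statement to all of \(A_\bd\).

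\textbf{Part (1).} Use the formula \(\{a_{ij},b_{uv}\}=\ldb a,b\rdb'_{u,j}\ldb a,b\rdb''_{i,v}\) as the definition of the bracket on matrix coefficients, and extend it to \(A_\bd\) as a biderivation. Three things must be checked.
\begin{enumerate}
\item \emph{Well-definedness on relations.} Matrix coefficients satisfy \((ab)_{ij}=\sum_k a_{ik}b_{kj}\), and \(A_\bd\) is presented by these relations together with \(R\)-linearity. The bracket must respect both. This follows from the Leibniz rule (2) of Definition \ref{def: dpois bracket}: for \(\ldb a,bc\rdb=\ldb a,b\rdb c+b\ldb a,c\rdb\), taking matrix coefficients and summing over the internal index reproduces \(\sum_k(\{a_{ij},b_{uk}\}c_{kv}+b_{uk}\{a_{ij},c_{kv}\})\). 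The symmetric statement in the first slot follows from antisymmetry. The rule (2), being a derivation in the outer bimodule structure, matches exactly the index pattern \((u,j),(i,v)\).
\item \emph{Antisymmetry.} This is immediate from \(\ldb a,b\rdb=-\ldb b,a\rdb^\circ\): swapping the roles of \((a,ij)\) and \((b,uv)\) swaps the tensor factors.
\item \emph{Jacobi.} This is the main obstacle. I would expand \(\{a_{ij},\{b_{uv},c_{pq}\}\}\) using the formula twice, yielding triple-tensor matrix coefficients of \(\ldb a,\ldb b,c\rdb\rdb_L\) and \(\ldb a,\ldb b,c\rdb\rdb_R\). Van den Bergh's identity then expresses the cyclic sum of ordinary Jacobiators through the triple bracket \(\ldb-,-,-\rdb\), built from the \(L\)-terms and cyclic permutations. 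It vanishes by axiom (3). The care needed is in the bookkeeping that the \(R\)-parts convert to \(L\)-parts via \(\ldb a,b\otimes c\rdb_R=(\ldb a,b\rdb\otimes c)\)-type identities.
\end{enumerate}

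\textbf{Part (2).} Set \(\mu^*(\xi)=\tr(\rho(\bw)\xi)\) for \(\xi\in\gl_\bd\); on the elementary matrix \(E_{ji}\) at vertex \(k\) this is the coordinate \((\bw_k)_{ij}\). We must show that \(\{(\bw)_{ij},b_{uv}\}\) is the infinitesimal conjugation action. Applying part (1) with \(\ldb\bw,b\rdb=\sum_k(be_k\otimes e_k-e_k\otimes e_kb)\) gives
\[
\{\bw_{ij},b_{uv}\}=(be_k)_{uj}(e_k)_{iv}-(e_k)_{uj}(e_kb)_{iv}=b_{uj}\delta_{iv}-\delta_{uj}b_{iv},
\]
restricted to the appropriate vertex blocks. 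This is precisely the coefficient of \(b\) under the vector field generated by \(E_{ji}\) acting by \([E_{ji},-]\) (up to the sign convention). Since Hamiltonian vector fields are derivations, the identity on generators suffices. \(\GL_\bd\)-equivariance of \(\mu\) is clear, because \(\rho(\bw)\) transforms by conjugation.

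\textbf{Part (3).} A point \(\rho\in\Rep^A_\bd\) factors through \(A_\bw=A/A\bw A\) if and only if \(\rho(\bw)=0\), which is equivalent to \(\mu(\rho)=0\) since the trace pairing is nondegenerate. Scheme-theoretically, the ideal of \(\Rep^{A_\bw}_\bd\) in \(A_\bd\) is generated by the matrix coefficients of \(x\bw y\) for \(x,y\in A\). Since \((x\bw y)_{ij}=\sum x_{ik}\bw_{kl}y_{lj}\), this ideal equals the ideal generated by the \(\bw_{kl}\), which is the ideal of \(\mu^{-1}(0)\). Taking \(\GL_\bd\)-invariants gives the stated equality of categorical quotients.
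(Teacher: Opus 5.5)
Your argument is correct and is the same argument the paper relies on. The paper proves nothing here itself: it cites Van den Bergh's Propositions~7.5.2 and~7.11.1 for parts~(1)--(2), whose proofs are exactly your generator-level well-definedness and antisymmetry checks, the reduction of the Jacobiator to triple brackets, and the evaluation of $\{\bw_{ij},b_{uv}\}$. It calls part~(3) immediate from the definitions, which is your comparison of ideals via $(x\bw y)_{ij}=\sum_{k,l}x_{ik}\bw_{kl}y_{lj}$.

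One slip in your Jacobi bookkeeping. The $R$-terms become $L$-terms through the antisymmetry axiom, namely $\ldb a,\ldb b,c\rdb\rdb_R=-\tau\ldb a,\ldb c,b\rdb\rdb_L$ with $\tau(x\otimes y\otimes z)=z\otimes x\otimes y$. The identity you quote is instead the definition of $\ldb a,b\otimes c\rdb_L$. With $\ldb a,b,c\rdb:=\ldb a,\ldb b,c\rdb\rdb_L+\tau\ldb b,\ldb c,a\rdb\rdb_L+\tau^2\ldb c,\ldb a,b\rdb\rdb_L$ one gets
\[
\{a_{ij},\{b_{uv},c_{pq}\}\}-\{b_{uv},\{a_{ij},c_{pq}\}\}-\{\{a_{ij},b_{uv}\},c_{pq}\}
=\ldb a,b,c\rdb'_{pj}\,\ldb a,b,c\rdb''_{iv}\,\ldb a,b,c\rdb'''_{uq}
-\ldb b,a,c\rdb'_{pv}\,\ldb b,a,c\rdb''_{uj}\,\ldb b,a,c\rdb'''_{iq},
\]
and both terms vanish by axiom~(3).
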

The  {Poisson bracket in part~\textup{(1)} is
\cite[Proposition~7.5.2]{Van2008Double}, and the moment-map statement
in part~\textup{(2)} is \cite[Proposition~7.11.1]{Van2008Double};
part~\textup{(3)} follows directly from the definitions.  See also
\cite[Theorem~6.4.3]{CBEG2007} in the bisymplectic setting}.

\subsection{Example:
Double quivers}\label{subsec: necklace}

In this work, given a finite quiver \(Q\) and an arbitrary arrow \(a\in Q_1\), we write \(s(a)\) for its source and \(t(a)\) for its target and \(a = e_{t(a)} a e_{s(a)}\).
{We identify a vertex} {\(i\)} {with its length-zero path}
{\(e_i\)}.

Let \(\DQ\) be the double quiver of \(Q\), obtained by adjoining a reverse arrow \(a^*\) for each \(a\in Q_1\).
For arrows \(a,b\in\overline{Q}_1\), set
\[
\varepsilon(a,b)
=
\begin{cases}
  1,& a\in Q_1\text{ and }b=a^*,\\
 -1,& b\in Q_1\text{ and }a=b^*,\\
  0,& \text{otherwise}.
\end{cases}
\]

Van den Bergh's double Poisson bracket on \(\QQ\) is determined by
\begin{equation}\label{Vdbsdoublebracket}
\ldb a,a^*\rdb=e_{s(a)}\otimes e_{t(a)},
\qquad
\ldb a^*,a\rdb=-e_{t(a)}\otimes e_{s(a)},
\end{equation}
{for \(a\in Q_1\). { Extend the reverse-arrow involution by}
{  \((a^*)^*=a\)}.
{ For arrows} {  \(f,g\in\overline Q_1\)}  {with}
{  \(f\neq g^*\)}, { set} {  \(\ldb f,g\rdb=0\)}, and  {extend these
generator rules} by  {bilinearity, skew-symmetry, and the double Leibniz
identity}.} The induced bracket on \(\QQ_\cyc\), called the necklace Lie algebra, satisfies
\begin{equation}\label{necklace bracket}
\begin{aligned}
&\{[a_1a_2\cdots a_k],[b_1b_2\cdots b_l]\}\\
&=\sum_{\substack{1\leq i\leq k\\1\leq j\leq l}}
\varepsilon(a_i,b_j)\,
\Bigl[
t(a_{i+1})a_{i+1}\cdots a_ka_1\cdots a_{i-1}
b_{j+1}\cdots b_lb_1\cdots b_{j-1}
\Bigr].
\end{aligned}
\end{equation}
for cyclic paths \([a_1\cdots a_k]\) and \([b_1\cdots b_l]\) in \(\QQ_\cyc\), where the indices are taken modulo \(k\) and \(l\).
The following proposition applies Proposition 
\ref{prop: noncom red} to doubled quivers.

\begin{proposition}[{\cite{CBEG2007, Van2008Double}}]\label{prop: noncom Hamil on quiver}
Let \(Q\) be a finite quiver.
Then \(\bw=\sum_{a\in Q_1}(aa^*-a^*a)\) is a
noncommutative moment map for \((\QQ,\ldb-,-\rdb)\), 
and the preprojective algebra
\[
\Pi Q=\QQ/\QQ\bw\QQ
\]
is obtained from \(\QQ\) by noncommutative Hamiltonian reduction. Consequently, \((\Pi Q)_\cyc\) is a Lie algebra and \(\QQ_\cyc\to(\Pi Q)_\cyc\) is a Lie algebra morphism.
\end{proposition}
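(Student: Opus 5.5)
The statement has two parts. First, \(\bw=\sum_{a\in Q_1}(aa^*-a^*a)\) is a noncommutative moment map for Van den Bergh's double bracket (\ref{Vdbsdoublebracket}) on \(\QQ\). Second, the consequences for \(\Pi Q\) and \((\Pi Q)_\cyc\). The second part is formal once the first holds: \(\Pi Q=\QQ/\QQ\bw\QQ\) is by definition the reduction \(A_\bw\) of Definition \ref{def: nc ham red}. Proposition \ref{prop: noncom red} then gives that the necklace Lie bracket on \(\QQ_\cyc\) (Proposition \ref{prop: induced Lie bracket}) descends to \((\Pi Q)_\cyc\), and that the projection \(\QQ_\cyc\to(\Pi Q)_\cyc\) is a Lie algebra morphism. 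So the work is to verify the identity
\[
\ldb\bw,p\rdb=\sum_{i\in I}\bigl(pe_i\otimes e_i-e_i\otimes e_ip\bigr)
\]
for all \(p\in\QQ\).

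\emph{Reduction to generators.} Both sides are derivations in \(p\) for the outer bimodule structure. For the left side this is axiom (2) of Definition \ref{def: dpois bracket}. For the right side, write \(\Delta(p)=\sum_i(pe_i\otimes e_i-e_i\otimes e_ip)\); this is \(p\otimes 1-1\otimes p\) written with \(1=\sum_i e_i\), hence \(\Delta(pq)=\Delta(p)q+p\Delta(q)\). Since \(\QQ\) is generated over \(R\) by the arrows, it suffices to check the identity on each \(e_j\) and on each arrow \(f\in\overline Q_1\). For idempotents, both sides vanish: the bracket is \(R\)-bilinear, so \(\ldb\bw,e_j\rdb=0\), and \(\Delta(e_j)=e_j\otimes e_j-e_j\otimes e_j=0\).

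\emph{Computation on arrows.} We have \(\ldb\bw,f\rdb=\sum_a\bigl(\ldb aa^*,f\rdb-\ldb a^*a,f\rdb\bigr)\). To expand each term I need the Leibniz rule in the \emph{first} argument. Combining skew-symmetry with axiom (2) gives
\[
\ldb bc,f\rdb=\ldb b,f\rdb *c+b*\ldb c,f\rdb,
\]
where the inner bimodule structure is \(b*(u\otimes v)=u\otimes bv\) and \((u\otimes v)*c=uc\otimes v\). The only nonzero brackets between arrows are \(\ldb a,a^*\rdb\) and \(\ldb a^*,a\rdb\). Hence for \(f=a\in Q_1\) only two terms survive:
\[
\ldb a,a\rdb=0,\qquad a*\ldb a^*,a\rdb=-e_{t(a)}\otimes ae_{s(a)},\qquad \ldb a^*,a\rdb*a=-e_{t(a)}a\otimes e_{s(a)}.
\]
Keeping track of the sign from the \(-a^*a\) summand, I expect
\[
\ldb\bw,a\rdb=ae_{s(a)}\otimes e_{s(a)}-e_{t(a)}\otimes e_{t(a)}a,
\]
which is exactly \(\Delta(a)\), because \(a=e_{t(a)}ae_{s(a)}\) kills all other summands in \(\Delta(a)\). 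The case \(f=a^*\) is symmetric.

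\emph{Main obstacle.} The only delicate point is the bookkeeping of the inner versus outer bimodule structures and of the signs. I would first pin down the inner Leibniz rule from skew-symmetry, and then check the single-loop case (the Jordan quiver) by hand to confirm the signs before writing the general case. Loops \(a\) with \(s(a)=t(a)\) need no separate treatment, since the computation never uses \(s(a)\neq t(a)\).
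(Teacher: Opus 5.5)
Your argument is correct, but it takes a different route from the paper. The paper gives no computation: it simply invokes the quiver moment-map theorems of Van den Bergh (Theorem~6.3.1) and Crawley-Boevey--Etingof--Ginzburg (Proposition~8.1.1, Section~8.2). You instead verify the moment-map identity on generators, using the inner Leibniz rule $\ldb bc,f\rdb=\ldb b,f\rdb*c+b*\ldb c,f\rdb$, which does follow from skew-symmetry and axiom (2) as you say.

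Your expected values are right. For an arrow $a\in Q_1$ one gets $\ldb\bw,a\rdb=a*\ldb a^*,a\rdb-\ldb a^*,a\rdb*a=a\otimes e_{s(a)}-e_{t(a)}\otimes a$. For the reverse arrow one gets $\ldb\bw,a^*\rdb=\ldb a,a^*\rdb*a^*-a^{*}*\ldb a,a^*\rdb=a^*\otimes e_{t(a)}-e_{s(a)}\otimes a^*$. Both match the required right-hand side, loops included.

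What your route buys is a check that the paper's own conventions are mutually consistent: right-to-left composition $a=e_{t(a)}ae_{s(a)}$, the normalization $\ldb a,a^*\rdb=e_{s(a)}\otimes e_{t(a)}$, the summed form of the moment-map condition, and the sign of $\bw$. The bare citation leaves this implicit, since the references use their own conventions. The citation in exchange is shorter and places the statement in the general double Poisson/bisymplectic framework. The passage to $\Pi Q$ and $(\Pi Q)_\cyc$ via Proposition~\ref{prop: noncom red} is the same in both.

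One justification needs repair. In $\QQ\otimes_{\bbC}\QQ$ the identity $\sum_i(pe_i\otimes e_i-e_i\otimes e_ip)=p\otimes1-1\otimes p$ fails once $|Q_0|>1$. For $p=e_j$ the left side is $0$, as you compute yourself, while the right side is $\sum_{i\neq j}(e_j\otimes e_i-e_i\otimes e_j)\neq0$; the identity holds only in $\QQ\otimes_R\QQ$. The derivation property you need is still true: with $E=\sum_i e_i\otimes e_i$, one has $\Delta(p)=pE-Ep$ in the outer bimodule, and every map of this form is a derivation.

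For completeness, also record that $\bw\in\bigoplus_i e_i\QQ e_i$, since $aa^*\in e_{t(a)}\QQ e_{t(a)}$ and $a^*a\in e_{s(a)}\QQ e_{s(a)}$.
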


\begin{proof}
This is the quiver moment-map construction of
\cite[Theorem~6.3.1]{Van2008Double} and
\cite[Proposition~8.1.1(ii)--(iii), Section~8.2]{CBEG2007}.
\end{proof}

\begin{example}\label{eg: nc pois on Jordan}
For the Jordan quiver, \(\QQ\cong\bbC\langle x,y\rangle\), and
\[
\ldb x,y\rdb=1\otimes1,
\qquad
\ldb y,x\rdb=-1\otimes1,
\qquad
\ldb x,x\rdb=\ldb y,y\rdb=0.
\]
The moment map is \(\bw=xy-yx\), so \(\Pi Q\cong\bbC[x,y]\); the induced bracket is the standard Poisson bracket on \(\bbC[x,y]\).
\end{example}

\section{Noncommutative reduction commutes with quantization}\label{sec: noncom Ham red}

This section reviews the noncommutative form of ``quantization commutes with reduction'' for quiver algebras and explains the flatness issue that appears after passage to representation spaces.
The distinction between noncommutative reduction and representation-side Hamiltonian reduction is essential: the former is defined for every finite quiver, whereas Holland's work identifies the latter with a quantization of the classical quiver variety only when the classical moment map is flat.

\subsection{Quantization of double quivers}
\label{subsec: nc quant}

Building on \cite{Tur1991Ske}, Schedler \cite{Sch2005} constructed a
Hopf-algebra quantization of the necklace Lie algebra.
Let \(Q\) be a finite quiver, and let
\[
R=\bigoplus_{i\in Q_0}\bbC e_i
\]
be the semisimple algebra generated by the orthogonal idempotents. We use the following notation.
\begin{enumerate}
\item Let \(AH:=\overline{Q}_1\times\mathbb{N}\), {the set of arrows with heights}.

\item Let \(E_{\overline{Q},h}\) be the \(\mathbb{C}\)-vector space spanned by \(AH\), {equipped with the natural \(R\)-bimodule structure determined by the source and target of each arrow.}

\item Let \(LH:=(T_RE_{\overline{Q},h})_\cyc\), {the generalized cyclic path space with heights}.

\item Let \(SLH_{\mathrm{pol}}:=\operatorname{Sym}(LH)[\hbar]\). The symmetric product is denoted by \(\&\).
\end{enumerate}

Let \(SLH'_{\mathrm{pol}}\) be the \(\mathbb{C}[\hbar]\)-submodule
spanned by elements of the form
\begin{equation}
\label{general form}
\begin{split}
(a_{1,1},h_{1,1})\cdots&(a_{1,l_1},h_{1,l_1})
\&
(a_{2,1},h_{2,1})\cdots(a_{2,l_2},h_{2,l_2})
\\
&\quad \&\cdots\&
(a_{k,1},h_{k,1})\cdots(a_{k,l_k},h_{k,l_k})
\&v_1\&v_2\&\cdots\&v_m.
\end{split}
\end{equation}
Here the \(h_{i,j}\) are pairwise distinct, \(a_{i,j}\in\overline Q_1\), and \(v_i\in Q_0\).
Let \(\widetilde A_{\mathrm{pol}}\) be the quotient of
\(SLH'_{\mathrm{pol}}\) obtained by identifying two height assignments
whenever they induce the same relative order on all heights.

Next, let \(\widetilde B_{\mathrm{pol}}\) be the
\(\mathbb{C}[\hbar]\)-submodule of \(\widetilde A_{\mathrm{pol}}\)
generated by the following skein relations.
\begin{itemize}
\item \(X-X^{'}_{i,j,i^{'},j^{'}}+\hbar X^{''}_{i,j,i^{'},j^{'}}\), \\[2mm]
where \(i\neq i^{'}\), \(h_{i,j}<h_{i^{'},j^{'}}\), and there is no \((i^{''},j^{''})\) such that
\(h_{i,j}<h_{i^{''},j^{''}}<h_{i^{'},j^{'}}\);\vspace{2mm}

\item \(X-X^{'}_{i,j,i,j^{'}}+\hbar X^{''}_{i,j,i,j^{'}}\), \\[2mm]
where \(h_{i,j}<h_{i,j^{'}}\), and there is no \((i^{''},j^{''})\) such that
\(h_{i,j}<h_{i^{''},j^{''}}<h_{i,j^{'}}\).
\end{itemize}
In these relations, \(X'\) and \(X''\) are defined as follows.
The element \(X^{'}_{i,j,i^{'},j^{'}}\) is obtained from \(X\) by interchanging the heights \(h_{i,j}\) and \(h_{i^{'},j^{'}}\).
The element \(X^{''}_{i,j,i^{'},j^{'}}\) is obtained by replacing the two components
\[
(a_{i,1},h_{i,1})\cdots(a_{i,l_i},h_{i,l_i})
\quad\text{and}\quad
(a_{i^{'},1},h_{i^{'},1})\cdots(a_{i^{'},l_{i^{'}}},h_{i^{'},l_{i^{'}}})
\]
with the single component
\begin{displaymath}
	\varepsilon(a_{i,j},a_{i^{'},j^{'}})t(a_{i,j+1})(a_{i,j+1},h_{i,j+1})\cdots(a_{i,j-1},h_{i,j-1})(a_{i^{'},j^{'}+1},h_{i^{'},j^{'}+1})\cdots(a_{i^{'},j^{'}-1},h_{i^{'},j^{'}-1}).
\end{displaymath}
Similarly, \(X^{'}_{i,j,i,j^{'}}\) is obtained from \(X\) by interchanging the heights \(h_{i,j}\) and \(h_{i,j^{'}}\), whereas \(X^{''}_{i,j,i,j^{'}}\) is obtained by replacing the component \((a_{i,1},h_{i,1})\cdots(a_{i,l_i},h_{i,l_i})\) with
\begin{align*}
\varepsilon(a_{i,j}, a_{i,j^{'}} )t(a_{i,j^{'}+1})(a_{i,j^{'}+1},h_{i,j^{'}+1})\cdots(a_{i,j-1},h_{i,j-1})\\
\&\ 
t(a_{i,j+1})(a_{i,j+1},h_{i,j+1})\cdots(a_{i,j^{'}-1},h_{i,j^{'}-1}).
\end{align*}

Set first
\[
\qneck^{\mathrm{pol}}
:=\widetilde A_{\mathrm{pol}}/\widetilde B_{\mathrm{pol}},
\]
and then define the formal algebra used throughout the paper by
\[
\qneck
:=
\varprojlim_m
\qneck^{\mathrm{pol}}/\hbar^m\qneck^{\mathrm{pol}}.
\]
On \(\qneck^{\mathrm{pol}}\), the product \(X\ast Y\) is defined by
placing all heights occurring in \(Y\) above those occurring in \(X\);
it extends uniquely and continuously to \(\qneck\).

For the necklace Lie algebra \(\QQ_\cyc\),
Schedler proved that the PBW-type theorem holds for \(\qneck\) (see \cite[Section 4]{Sch2005}); then Ginzburg-Schedler gave a different version in \cite{GinSch2006}.
His PBW basis is constructed as follows.
Order the cyclic paths and vertex classes, and choose a first arrow in
each nonconstant cyclic path.  Lift every ordered symmetric monomial
in these classes by assigning consecutive heights along its successive
cyclic factors.  These lifted monomials form a
\(\bbC[\hbar]\)-basis before completion and a topological
\(\bbC[[\hbar]]\)-basis after completion.
Sending each such basis element to its height-free symmetric monomial
defines \(\pr\).  This map depends on the chosen basis conventions;
it is not literal height-forgetting on arbitrary representatives.
\begin{theorem}\cite[Theorem 4.1]{Sch2005}
\label{thm: schedler's}
Let \(Q\) be a finite quiver, and fix an order on the set \(\{x_i\}\) of cyclic paths in \(\overline Q\) and idempotents in \(Q_0\).
Then the above height-forgetting projection
establishes an isomorphism
\[
{\rm pr}\colon\qneck\longrightarrow \Sym(\QQ_\cyc)[[\hbar]]
\]
of topologically free  \(\mathbb{C}[[\hbar]]\)-modules.
Moreover, \(\qneck\) is a noncommutative quantization of \(\QQ\).
\end{theorem}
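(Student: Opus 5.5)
The proof splits into a spanning statement, a linear independence statement, and an identification of the semiclassical limit. Everything is done first over \(\bbC[\hbar]\) for \(\qneck^{\mathrm{pol}}\), and then passed to the \(\hbar\)-adic completion. The inverse limit of free modules with compatible bases is topologically free with the completed basis, so it suffices to show that the lifted PBW monomials form a \(\bbC[\hbar]\)-basis of \(\qneck^{\mathrm{pol}}\).

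\emph{Spanning.} Take any representative \(X\) of the form \eqref{general form}. I measure its distance from a PBW lift by a pair of invariants: first the total number of arrows, and then the number of ``inversions''. An inversion is a pair of heights whose relative order differs from the one prescribed by the basis conventions, which are: factors ordered, consecutive heights along each factor, each factor starting at its chosen first arrow. Each skein relation expresses \(X\) as \(X'\) plus \(\hbar X''\). Here \(X'\) differs from \(X\) by one adjacent transposition of heights, so it has one fewer inversion. The term \(X''\) has strictly fewer arrows, since the two arrows \(a_{i,j},a_{i',j'}\) are deleted. Using the cyclic invariance of components in \(LH\), the heights along a component can be rotated so that they start at the chosen first arrow. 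Double induction then shows that every element is a \(\bbC[\hbar]\)-combination of lifted ordered monomials. Vertex factors \(v_i\) carry no heights and are harmless.

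\emph{Independence.} This is the main obstacle. Spanning only uses the relations in one direction; for independence one must show that \(\widetilde B_{\mathrm{pol}}\) does not force any nontrivial relation among the PBW lifts. I would construct a \(\bbC[\hbar]\)-linear map \(\Psi:\widetilde A_{\mathrm{pol}}\to\Sym(\QQ_\cyc)[\hbar]\) that kills \(\widetilde B_{\mathrm{pol}}\) and sends each PBW lift to its height-free monomial. The natural candidate is defined by the reduction algorithm itself: \(\Psi(X)\) is the result of straightening \(X\) to normal form. The task is then to prove that \(\Psi\) is well defined, i.e.\ independent of the order in which inversions are resolved. This is a diamond-lemma check on overlapping pairs of adjacent transpositions. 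Disjoint swaps commute trivially. For the braid-type overlaps (three consecutive heights), the \(\hbar\)- and \(\hbar^2\)-correction terms must agree. That agreement amounts to the Jacobi identity for the necklace bracket \eqref{necklace bracket} together with the double-bracket identity of Definition~\ref{def: dpois bracket}(3) for \(\ldb-,-\rdb\) on \(\QQ\). There is also a cyclic-rotation compatibility to check, namely that moving the chosen first arrow commutes with straightening.

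I expect this confluence computation to be the hardest step. Self-overlaps within a single component, where a swap splits the component into two, must be treated separately from swaps between different components. As an alternative, if the direct diamond-lemma argument becomes unwieldy, I would try to realise \(\qneck^{\mathrm{pol}}\) faithfully enough, for instance by the quantum trace maps into \(\hbar\)-Weyl algebras of \(\Rep_\bd\) for all \(\bd\). Linear independence of the images of PBW lifts would then follow from that of their symbols, letting \(\bd\to\infty\).

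\emph{Quantization.} Setting \(\hbar=0\) makes the skein relations say that heights are irrelevant. Hence \(\qneck/\hbar\cong\Sym(\QQ_\cyc)\) as commutative algebras, and this is compatible with \(\pr\). For the Poisson bracket, take \(X,Y\) of one component each. The commutator \(X\ast Y-Y\ast X\) is obtained by moving every height of \(Y\) past every height of \(X\). Modulo \(\hbar^2\), each crossing of \(a_i\) past \(b_j\) contributes \(\varepsilon(a_i,b_j)\hbar\) times the glued cyclic word. Thus \((X\ast Y-Y\ast X)/\hbar\equiv\{[X],[Y]\}\) modulo \(\hbar\), which is exactly \eqref{necklace bracket}. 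Extending by the Leibniz rule gives the bracket on all of \(\Sym(\QQ_\cyc)\), so \(\qneck\) is a flat deformation quantization of the necklace Poisson algebra.
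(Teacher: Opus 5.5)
The paper does not prove this statement; it quotes it from Schedler. So the question is whether your outline closes on its own.

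The reduction to $\qneck^{\mathrm{pol}}$ and the spanning induction are fine. In the diamond-lemma route, however, you have put the nontrivial check in the wrong place.

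Take three consecutive heights $p_1<p_2<p_3$ carrying $a,b,c$, with both adjacent pairs inverted. The two routes to $cba$ give exactly the same element $cba-\hbar\bigl(X''_{ab}+X''_{ac}+X''_{bc}\bigr)$. In both routes each pair is contracted with the same components and the same lower-height arrow. The surviving arrow sits at $p_1$ in one route and at $p_3$ in the other, and these are the same element of $\widetilde A_{\mathrm{pol}}$ because only relative heights are recorded. So there is no $\hbar^2$ term, and the Jacobi identity plays no role.

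The real content is in the case you call trivial. For disjoint adjacent pairs $A,B$, the two routes give $X'_{AB}-\hbar X''_B(X'_A)-\hbar X''_A$ and $X'_{AB}-\hbar X''_A(X'_B)-\hbar X''_B$. Apply the skein relation for $A$ inside $X''_B$ and for $B$ inside $X''_A$; these configurations have fewer arrows, so the relations are available by induction. The difference of the two routes then becomes $\hbar^2\bigl((X''_A)''_B-(X''_B)''_A\bigr)$.

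What is missing is therefore the lemma that contractions at disjoint pairs commute. It holds because a contraction at $\{p,q\}$ is the same surgery whether it glues two components or splits one. The surgery joins the predecessor of $p$ to the successor of $q$, and the predecessor of $q$ to the successor of $p$. Equivalently, compose the cyclic-successor permutation with the transposition $(p\,q)$ and delete $p,q$. Disjoint transpositions commute, the $\varepsilon$-factors multiply, and emptied cycles give the same vertex factors.

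You also need a canonical tie-break among components carrying the same cyclic word. Without it, not every generator of $\widetilde B_{\mathrm{pol}}$ is, up to sign, one of your reductions.

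Your fallback through $\Tr^q$ avoids the diamond lemma altogether. It is the same mechanism the paper uses for linear independence in the proof of Theorem~\ref{thm: noncom quant red for quiver}. $\Tr^q$ is well defined on $\qneck^{\mathrm{pol}}$ by checking the skein relations directly against $[\partial_{a_{q,p}},a_{p',q'}]=\hbar\,\delta_{q,p'}\delta_{p,q'}$, with no PBW input. The Weyl algebras are $\hbar$-torsion-free, and modulo $\hbar$ one gets classical traces.

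The injectivity you then need, however, is not the diagonal argument of Proposition~\ref{prop: joint inj}. Already for the Jordan quiver, $[xyxy]$ and $[x^2y^2]$ have equal traces on commuting matrices. The correct input is that distinct monomials in traces of necklaces of total arrow-degree at most $m$ are linearly independent functions on $n\times n$ matrices when $n\ge m$; pure trace identities of $M_n$ begin in degree $n+1$. For general $Q$ use the quiver analogue, and use polynomial dependence on the $d_i$ to separate vertex factors.

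Finally, a sign. With the relation $X-X'+\hbar X''$ and $\varepsilon$ evaluated with the lower-height arrow first, moving the arrows of $Y$ below those of $X$ gives $X\ast Y-Y\ast X\equiv-\hbar\{[X],[Y]\}\bmod\hbar^2$, not $+\hbar\{[X],[Y]\}$. This matches the paper's convention $\{\pr_0(u),\pr_0(v)\}=-\hbar^{-1}[u,v]$ (compare its worked example).
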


Here, given an algebra \(A\) with a noncommutative Poisson structure, a \emph{noncommutative quantization} of \(A\) means a flat, \(\hbar\)-adically complete \(\bbC[[\hbar]]\)-algebra \(A_\hbar\), together with a module isomorphism
\[
\ell_\hbar:\Sym(A_\cyc)[[\hbar]]\xrightarrow{\sim}A_\hbar.
\]
Its reduction modulo \(\hbar\) is required to be an isomorphism of unital algebras. Under this identification, the bracket induced on \(A_\hbar/\hbar A_\hbar\) by \(-\hbar^{-1}[-,-]\bmod\hbar\) must coincide with the Poisson bracket on \(\Sym(A_\cyc)\) obtained by extending the given Lie bracket on \(A_\cyc\);
in other words, for \(u,v\in\qneck\),
\begin{equation}\label{nc  pic}
\{\pr_0(u),\pr_0(v)\}
=-\frac{1}{\hbar}[u,v]\bmod\hbar,
\qquad
\pr_0(u):=u\bmod\hbar.
\end{equation}

\begin{example}
	Let $Q$ be the quiver in Figure \ref{A3};
	its double $\overline{Q}$ is shown in Figure \ref{DA3}.
	\begin{figure}[H]
		\centering
		\subfigure[$Q$]{
			\includegraphics[scale=0.4]{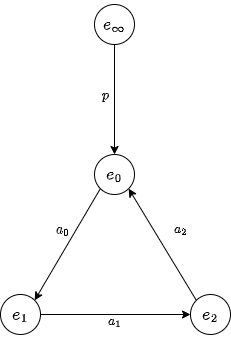}
			\label{A3}}
		\qquad\qquad\qquad
		\subfigure[$\overline Q$]{
			\includegraphics[scale=0.4]{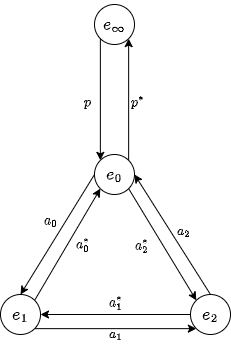}
			\label{DA3}}
		\caption{Quiver $Q$ and its doubled version $\overline{Q}$.}
	\end{figure}
	Consider
	\begin{equation*}
		X = (a_0 ^\ast, 1)(a_1 ^\ast, 2)(a_2 ^\ast, 3)\ \text{and}\ Y = (a_2 ^\ast, 1)(a_2, 2).
	\end{equation*}
	Then
	\begin{equation*}
		\begin{split}
			[X, Y] & = (a_0 ^\ast, 1)(a_1 ^\ast, 2)(a_2 ^\ast, 3) \& (a_2 ^\ast, 4)(a_2 , 5) - (a_2 ^\ast, 1)(a_2, 2) \& (a_0 ^\ast, 3)(a_1 ^\ast, 4)(a_2 ^\ast, 5)\\
			& = (a_0 ^\ast, 2)(a_1 ^\ast, 3)(a_2 ^\ast, 4) \& (a_2 ^\ast, 1)(a_2 , 5) - (a_2 ^\ast, 1)(a_2, 2) \& (a_0 ^\ast, 3)(a_1 ^\ast, 4)(a_2 ^\ast, 5)\\
			& = - \hbar \varepsilon(a_2 ^\ast, a_2)(a_0 ^\ast, 2)(a_1 ^\ast, 3)(a_2 ^\ast, 1)\\
			& = - \hbar \varepsilon(a_2 ^\ast, a_2)(a_0 ^\ast,1)(a_1 ^\ast, 2)(a_2 ^\ast,  3 )\\
		\end{split}
	\end{equation*}
	On the other hand,
	\(	\{a_0 ^\ast a_1^ \ast a_2 ^\ast, a_2 ^\ast  a_2 \} = \varepsilon(a_2 ^\ast, a_2) a_0 ^\ast a_1 ^\ast a_2 ^\ast.\)
	It is clear that the projection satisfies the  condition.
\end{example}

For comparison, we recall the standard notion of quantization for a commutative Poisson algebra.

\begin{definition}\label{def: quant Poi}
Let \(A_0\) be a commutative Poisson algebra over \(\bbC\).
A \emph{formal quantization} of \(A_0\) is an associative \(\bbC[[\hbar]]\)-algebra \(A_\hbar\) satisfying the following conditions:
\begin{enumerate}
\item \(A_\hbar\) is flat over \(\bbC[[\hbar]]\);

\item \(A_\hbar\) is complete with respect to the \(\hbar\)-adic topology;

\item there exists an isomorphism of commutative algebras
\[
A_\hbar/\hbar A_\hbar\xrightarrow{\sim}A_0;
\]
\item for any lifts \(a,b\in A_\hbar\) of \(\bar a,\bar b\in A_0 \cong A_\hbar/\hbar A_\hbar \), one has
\[
[a,b]\in\hbar A_\hbar,
\]
and the induced bracket
\begin{equation}\label{ pic}
\{\bar a,\bar b\}
:=
-\frac{1}{\hbar}[a,b]\quad\bmod\hbar
\end{equation}
coincides with the given Poisson bracket on \(A_0\).
\end{enumerate}
\end{definition}

The quantum trace realizes the representation-theoretic compatibility
predicted by the Kontsevich--Rosenberg principle.
In the present completed
and sign-adjusted convention it relates \(\qneck\) to the \(\hbar\)-Weyl
algebra \(\mD_\hbar (\Rep^{\bbC Q} _\bd)\) on \(\Rep^{\bbC Q}_\bd\).
For every coordinate function \(a_{p,q}\) on
\(\Rep^{\bbC Q}_{\mathbf d}\), let
\(\partial_{a_{p,q}}\) denote the corresponding generator of the
\(\hbar\)-Weyl algebra, i.e.
\[
\partial_{a_{p,q}}
=
\hbar\frac{\partial}{\partial a_{p,q}}.
\]
For an arrow \(a\in Q_1\), we write \([a]_{p,q} = a_{p,q}\) for its coordinate
function, and for the reverse arrow \(a^\ast \) we set
\[
[a^*]_{p,q}
:=
\partial_{a_{q,p}}.
\]

\begin{definition}[Schedler]\label{def: q trace map}
Suppose that $Q$ is a finite quiver and that $\mathbf{d}$ is a dimension vector. The \emph{quantum trace map} $\mathrm{Tr}^q$ is the continuous $\mathbb{C}[[\hbar]]$-algebra homomorphism from $\qneck$ to $\mathcal{D}_{\hbar}
	(\Rep^{\bbC Q} _\bd)$ such that, for any element of the form (\ref{general form}), its image is
	\begin{equation}
		d_{v_1}\cdots d_{v_{m}}
		\sum_{\substack{1\leq k_{i,j}\leq d_{t(a_{i,j})}\\
		                   \text{for all }i,j}}
		\left(\prod_{h=1} ^{N} [a_{\phi ^{-1}(h)}]_{k_{\phi^{-1}(h)},
			k_{\phi^{-1}(h)+1}}\right),\label{F.1}
	\end{equation}
where $\{ h_{i,j} \}=\{1,2,\ldots,N\}$ and $\phi$ is defined by $\phi(i,j)=h_{i,j}$.
\end{definition}

Here \((i,j)+1=(i,j+1)\), where the second index is taken modulo \(l_i\).
The factor \(d_{v_s}\) is the component of the dimension vector at the vertex \(v_s\), and the product in the displayed formula is ordered by increasing height.
The cited construction shows that \(\mathrm{Tr}^q\) is independent of
the chosen representative.  Multiplicativity follows directly from the
same formula: represent \(X*Y\) with all heights of \(X\) below all heights
of \(Y\); the component-index sums are independent, and the ordered
operator product places every factor from \(X\) before every factor from
\(Y\).  Hence \(\Tr^q(X*Y)=\Tr^q(X)\Tr^q(Y)\), and continuity extends the
polynomial map to the completions.  Its image is contained in
\(\mathcal D_\hbar(\Rep^{\bbC Q}_\bd)^{\gl_\bd(\bbC)}\).
The following theorem states that the quantum trace map is compatible with the two  conditions \eqref{nc  pic} and \eqref{ pic}.

\begin{theorem}\cite[Section~3.4]{Sch2005}
\label{thm: from nc  to }
Let \(Q\) be a finite quiver. Let \(\bd\) be a dimension vector.
	Then, for \(u,v\in\qneck\),
\begin{equation*}
	\{\mathrm{Tr}(\pr_0 (u)),\mathrm{Tr}( \pr_0 (v))\}  =
	-\frac1\hbar
	\left[
	\Tr^q(u),
	\Tr^q(v)
	\right]
	\bmod\hbar.
\end{equation*}
\end{theorem}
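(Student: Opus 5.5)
Since $\Tr^q$ is a continuous algebra homomorphism (Definition \ref{def: q trace map}), it suffices to check the identity on the topological spanning set of products of the form \eqref{general form}. For each side I will work with representatives taken modulo $\hbar$.

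On the right-hand side, write $u_0=\pr_0(u)$ and $v_0=\pr_0(v)$. Modulo $\hbar$, $\Tr^q(u)$ reduces to the commutative function $\Tr(u_0)$ on $T^*\Rep^{\bbC Q}_\bd=\Rep^{\bbC\DQ}_\bd$. The reason is that, modulo $\hbar$, reordering the factors $[a]_{k,k'}$ does not change the operator. Moreover, the symbol of $\partial_{a_{q,p}}$ is the cotangent coordinate $a^*_{p,q}$. Hence the order of heights disappears, and the sum \eqref{F.1} becomes $\prod_i \tr$ of the cyclic words times the factors $d_{v}$. This is exactly $\Tr$ of the height-free monomial, extended multiplicatively to $\Sym$.

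Next I use the standard fact that the $\hbar$-Weyl algebra quantizes the canonical Poisson bracket on $T^*\Rep$. With the sign convention $[\partial_{a},a]=\hbar$, one gets $-\hbar^{-1}[F,G]\bmod\hbar=\{\bar F,\bar G\}$ with $\{a_{p,q},a^*_{q',p'}\}$ equal to $\pm\delta\delta$. I fix the sign so that it matches the bracket $\{a_{ij},b_{uv}\}=\ldb a,b\rdb'_{u,j}\ldb a,b\rdb''_{i,v}$ of the Kontsevich--Rosenberg theorem applied to \eqref{Vdbsdoublebracket}. The equality of $\Tr^q(u)$ with a lift of $\Tr(u_0)$ then gives
\[
-\tfrac1\hbar[\Tr^q(u),\Tr^q(v)]\equiv\{\Tr(u_0),\Tr(v_0)\}_{T^*\Rep}\pmod\hbar .
\]

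It remains to identify this representation-space bracket with the bracket on the image of $\Sym(\QQ_\cyc)$. In other words, I need $\Tr$ to be a Poisson map from $\Sym(\QQ_\cyc)$, equipped with the necklace bracket \eqref{necklace bracket} extended by the Leibniz rule, to $\bbC[\Rep^{\bbC\DQ}_\bd]$. Both sides are biderivations, so it suffices to check this on single necklaces $[a_1\cdots a_k]$ and $[b_1\cdots b_l]$. Expanding $\tr(a_1\cdots a_k)=\sum a_{1,k_1k_2}\cdots a_{k,k_kk_1}$ and applying the Leibniz rule produces one term for each pair $(i,j)$ with $\varepsilon(a_i,b_j)\ne0$. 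Contracting the Kronecker deltas glues the two index cycles into the single trace $\tr(a_{i+1}\cdots a_{i-1}b_{j+1}\cdots b_{j-1})$, which is precisely $\Tr$ of the right-hand side of \eqref{necklace bracket}. When $k=1$ or $l=1$, the vertex-idempotent factor $t(a_{i+1})$ produces the factor $d$.

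\textbf{Main obstacle.} The delicate point is the sign convention. The paper's identification of the quantization uses $-\hbar^{-1}[\,,]$, $[a^*]_{p,q}=\partial_{a_{q,p}}$, and $\varepsilon(a,a^*)=1$, and these must be consistent with one another. I would verify this on the example $u=(x,1)$, $v=(y,1)$ for the Jordan quiver with $\bd=1$. There the claim reduces to $-\hbar^{-1}[x,\hbar\partial_x]=1=\{x,y\}$, which fixes all the signs; the general case then follows by the local computation above.
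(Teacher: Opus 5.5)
The paper gives no argument of its own for this statement; it only cites Schedler's Section~3.4, where the result is part of the construction of the quantum trace. Your proposal is therefore a self-contained replacement, and it is correct.

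In fact the displayed identity is already proved by the end of your third paragraph:
\begin{itemize}
\item both sides are bilinear and depend only on $u,v$ modulo $\hbar$, since $u\in\hbar\qneck$ gives $[\Tr^q(u),\Tr^q(v)]\in\hbar^2\mD_\hbar(\Rep^{\bbC Q}_\bd)$;
\item the residue of $\Tr^q(u)$ in $\mD_\hbar(\Rep^{\bbC Q}_\bd)/\hbar\mD_\hbar(\Rep^{\bbC Q}_\bd)=\bbC[T^*\Rep^{\bbC Q}_\bd]$ is $\Tr(\pr_0(u))$;
\item $-\hbar^{-1}[\cdot,\cdot]\bmod\hbar$ is the bracket on $T^*\Rep^{\bbC Q}_\bd$.
\end{itemize}

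Two points must be justified rather than assumed.

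First, the paper warns that $\pr$ is not literal height-forgetting on representatives. You need the observation that modulo $\hbar$ every skein relation becomes a pure height swap $X\equiv X'$. Hence height-forgetting is well defined on $\qneck/\hbar\qneck$ and agrees with $\pr_0$ on the PBW basis, so it agrees everywhere. Only then may you read $\pr_0(u)$ off an arbitrary representative of the form \eqref{general form}.

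Second, the sign is not yours to ``fix''; it has to be verified. Moreover, the check at $\bd=1$ cannot see which indices get contracted. The general check compares $-\hbar^{-1}[a_{ij},\partial_{a_{vu}}]=\delta_{iv}\delta_{uj}$ with the Kontsevich--Rosenberg value $\{a_{ij},a^*_{uv}\}=(e_{s(a)})_{u,j}(e_{t(a)})_{i,v}=\delta_{uj}\delta_{iv}$. These agree precisely because of the transpose in $[a^*]_{p,q}=\partial_{a_{q,p}}$. Since both brackets are biderivations, they then agree on all of $\bbC[T^*\Rep^{\bbC Q}_\bd]$.

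Your fourth step, that $\Tr$ intertwines the necklace bracket with the representation-space bracket, is not needed for the statement as written. Its left-hand side is the bracket of two functions on $T^*\Rep^{\bbC Q}_\bd$, not $\Tr$ of a necklace bracket. The step is nonetheless correct: your contraction producing $\tr(a_{i+1}\cdots a_{i-1}b_{j+1}\cdots b_{j-1})$ is right. One small correction: the dimension factor $d_v$ arises only when $k=l=1$, not when just one of them equals $1$.

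This step upgrades the result to $\Tr\{\pr_0(u),\pr_0(v)\}=-\hbar^{-1}[\Tr^q(u),\Tr^q(v)]\bmod\hbar$. That is the form linking the statement to the noncommutative quantization identity for $\qneck$. The stronger form also follows with no index computation. Apply your residue identification to $\hbar^{-1}[u,v]\in\qneck$, use multiplicativity of $\Tr^q$, and invoke $\{\pr_0(u),\pr_0(v)\}=-\hbar^{-1}[u,v]\bmod\hbar$. What your direct computation buys is independence from Schedler's quantization theorem.
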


\subsection{Holland's work}\label{subsec: [Q,R]=0 on rep}

For a finite quiver \(Q\) and a dimension vector \(\bd\), define the quiver variety by
\[
\mM_\bd(Q)=\Spec\bbC[\mu^{-1}(0)]^{\GL_\bd(\bbC)}.
\]
Here \(\mu\) is the moment map on \(T^*\Rep^{\bbC Q}_\bd\):
\begin{equation*}
\mu\colon T^*\Rep^{\bbC Q}_\bd\longrightarrow\gl_\bd(\bbC)^*,
\qquad
\rho\tos\tr\!\left(\left(\sum_{a\in Q_1}[\rho_a,\rho_{a^*}]\right)\cdot-\right).
\end{equation*}

The standard quantization of \(T^*\Rep^{\bbC Q}_\bd\) is the \(\hbar\)-Weyl algebra \(\mD_\hbar(\Rep^{\bbC Q}_\bd)\). Quantum Hamiltonian reduction requires a quantum moment map.

\begin{definition}\label{def: quantum moment map}
Let \(G\) be an algebraic group with Lie algebra \(\mathfrak g\), and let \(A_\hbar\) be a flat, complete \(\mathbb C[[\hbar]]\)-algebra endowed with a \(\mathfrak g\)-action.
A continuous \(\mathbb C[[\hbar]]\)-algebra homomorphism
\[
\mu_\hbar\colon\mathcal U_\hbar^-(\mathfrak g)\longrightarrow A_\hbar
\]
is called a \emph{quantum moment map} if, for every \(v\in\mathfrak g\), the derivation
\[
A_\hbar\longrightarrow A_\hbar,
\qquad
a\tos-\frac{1}{\hbar}[\mu_\hbar(v),a],
\]
is the action of \(v\).
\end{definition}
Here, \(U_{\hbar}^{-}(\mathfrak g)\) is the completed universal enveloping algebra
\[T(\mathfrak g)[[\hbar]]\Big/\overline{
\left\langle
vw-wv+\hbar[v,w]_{\mathfrak g}
\;\middle|\;
v,w\in\mathfrak g
\right\rangle}.
\]
For quiver representation spaces, the quantum moment map is obtained from the infinitesimal \(\GL_\bd(\bbC)\)-action.
On representation points we use the standard action
\(g\cdot\rho_a=g_{t(a)}\rho_a g_{s(a)}^{-1}\), and on coordinate
functions the induced action \((g\cdot f)(\rho)=f(g^{-1}\cdot\rho)\).
The following is obtained by direct calculation; it also fixes the
elementary-matrix and transpose conventions used below.

\begin{proposition}\label{prop: chi + habr br}
Let \(Q\) be a finite quiver and \(\bd\) a dimension vector. Then the following statements hold.
\begin{enumerate}
\item The infinitesimal action of \(\GL_\bd(\bbC)\) on \({\Rep^{\bbC Q}_\bd}\) is given by the Lie algebra homomorphism
\[
\tau_{\mathrm{cl}}\colon\gl_\bd(\bbC)
\longrightarrow
\operatorname{Der}_{\bbC} \big(\bbC[\Rep^{\bbC Q}_\bd]\big)
\]
determined by
\begin{equation}\label{for: tau}
e^i_{p,q}\tos
\sum_{\substack{a\in Q_1\\s(a)=i}}\sum_{j=1}^{d_{t(a)}}[a]_{j,p}
\frac{\partial}{\partial a_{j,q}}
-
\sum_{\substack{a\in Q_1\\t(a)=i}}\sum_{j=1}^{d_{s(a)}}[a]_{q,j}
\frac{\partial}{\partial a_{p,j}}.
\end{equation}
Here \({e^i_{p,q}}\) is the elementary matrix in the \(i\)-th summand of \(\gl_\bd(\bbC)\).

\item Put
\[
\tau_\hbar(v):=\hbar\tau_{\mathrm{cl}}(v)
\in\mD_\hbar(\Rep^{\bbC Q}_\bd).
\]
Then \(\mu_\hbar(v):=-\tau_\hbar(v)\) extends to a quantum
moment map
\[
\mu_\hbar:\mathcal U_\hbar^-(\gl_\bd(\bbC))
\longrightarrow\mD_\hbar(\Rep^{\bbC Q}_\bd).
\]
For every character \(\chi:\gl_\bd(\bbC)\to\bbC\), the assignment
\(v\mapsto\mu_\hbar(v)+\hbar\chi(v)\) is also a quantum moment map.
\end{enumerate}
\end{proposition}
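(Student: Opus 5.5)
The plan is a direct computation: first differentiate the group action on coordinate functions to get \eqref{for: tau}, then deduce the quantum moment map property from the fact that \(\hbar\)-multiples of vector fields have commutators controlled by the vector-field bracket.

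For part (1), I will differentiate the action on the generators \(a_{j,q}\) of \(\bbC[\Rep^{\bbC Q}_\bd]\). Take \(v=e^i_{p,q}\) and \(g=\exp(tv)\), so \(g^{-1}=1-tv+O(t^2)\). From \(g^{-1}\cdot\rho_a=g^{-1}_{t(a)}\rho_a g_{s(a)}\), the derivative at \(t=0\) is \(-v_{t(a)}\rho_a+\rho_a v_{s(a)}\).
\begin{itemize}
\item If \(s(a)=i\), the term \(\rho_a e_{p,q}\) has \((j,q)\)-entry \(\rho(a)_{j,p}\). This gives the first sum in \eqref{for: tau}.
\item If \(t(a)=i\), the term \(-e_{p,q}\rho_a\) has \((p,j)\)-entry \(-\rho(a)_{q,j}\). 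This gives the second sum.
\end{itemize}
Since both sides of \eqref{for: tau} are derivations, agreement on the generators \(a_{j,q}\) gives the formula on all of \(\bbC[\Rep^{\bbC Q}_\bd]\). The map \(\tau_{\mathrm{cl}}\) is a Lie algebra homomorphism because \((g\cdot f)(\rho)=f(g^{-1}\cdot\rho)\) is a left action, and the differential of a left action is a Lie homomorphism. I would also double-check this directly on elementary matrices, since the transpose conventions matter later.

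For part (2), note that \(\hbar\,\partial/\partial a_{j,q}=\partial_{a_{j,q}}\). Hence \(\tau_\hbar(v)=\hbar\tau_{\mathrm{cl}}(v)\) is a linear combination of elements \(a\,\partial_b\), so it lies in \(\mD_\hbar(\Rep^{\bbC Q}_\bd)\). In the \(\hbar\)-Weyl algebra, for vector fields \(\xi,\eta\) with polynomial coefficients one has \([\hbar\xi,\hbar\eta]=\hbar^2[\xi,\eta]\). Together with part (1) this gives
\[
[\mu_\hbar(v),\mu_\hbar(w)]=\hbar^2\tau_{\mathrm{cl}}([v,w])=-\hbar\,\mu_\hbar([v,w]).
\]
This is exactly the defining relation \(vw-wv+\hbar[v,w]=0\) of \(\mathcal U_\hbar^-(\gl_\bd(\bbC))\). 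So \(\mu_\hbar\) extends to a continuous algebra homomorphism, using the universal property of the tensor algebra and then completion.

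For the derivation condition, both \(a\mapsto-\hbar^{-1}[\mu_\hbar(v),a]=[\tau_{\mathrm{cl}}(v)\cdot\hbar,a]/\hbar\) and the induced \(\gl_\bd\)-action on \(\mD_\hbar\) are derivations. It therefore suffices to compare them on the generators.
\begin{itemize}
\item On functions \(f\), the bracket gives \(\tau_{\mathrm{cl}}(v)(f)\), which is the action.
\item On the generators \(\partial_b\), the bracket gives \(-\hbar\sum_c(\partial_b\,\xi^c)\,\partial/\partial c\), where \(\xi=\tau_{\mathrm{cl}}(v)=\sum_c\xi^c\,\partial/\partial c\). This is the standard Lie derivative of a vector field, i.e.\ the induced action on \(\partial_b\).
\end{itemize}

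For the shift by a character \(\chi\), scalars are central, so \(\mu_\hbar(v)+\hbar\chi(v)\) has the same commutators with everything as \(\mu_\hbar(v)\). In particular the derivation condition is unchanged. Since \(\chi([v,w])=0\), we get
\[
[\mu_\hbar(v)+\hbar\chi(v),\,\mu_\hbar(w)+\hbar\chi(w)]=-\hbar\bigl(\mu_\hbar([v,w])+\hbar\chi([v,w])\bigr),
\]
so the shifted map still satisfies the relation of \(\mathcal U_\hbar^-\).

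I expect the main obstacle to be the sign and transpose bookkeeping. The convention \((g\cdot f)(\rho)=f(g^{-1}\rho)\) must produce exactly \eqref{for: tau}, and that in turn must make \(\mu_\hbar=-\tau_\hbar\), rather than \(+\tau_\hbar\), compatible with the \(-\hbar^{-1}\) normalization. I would settle this first on the one-vertex, \(d=1\) case before writing the general computation.
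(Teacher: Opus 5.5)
Your proposal is correct and takes the same route as the paper, which gives no argument beyond "direct calculation": you differentiate \((g\cdot f)(\rho)=f(g^{-1}\cdot\rho)\) on the generators \(a_{j,q}\) to get the formula for \(\tau_{\mathrm{cl}}\), and you use \([\tau_\hbar(v),\tau_\hbar(w)]=\hbar\,\tau_\hbar([v,w])\) together with \(-\hbar^{-1}[\mu_\hbar(v),-]=[\tau_{\mathrm{cl}}(v),-]\) to obtain the quantum moment map property. Your signs and transposes check out, including the choice \(\mu_\hbar=-\tau_\hbar\) under the \(-\hbar^{-1}\) normalization, and the character shift works because \(\chi\) is central and vanishes on \([v,w]\).
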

Note that the scalar subgroup of \(\GL_n(\bbC)\) acts
trivially, so the effective moment map is therefore
\[
\mu_{\mathrm{eff}}:
\gl_n(\bbC)\times\gl_n(\bbC)
\longrightarrow\mathfrak{pgl}_n(\bbC)^*
\cong\mathfrak{sl}_n(\bbC),
\qquad (X,Y)\longmapsto[X,Y].
\]

For a character \(\chi:\mathfrak{gl}_{\mathbf d}(\mathbb C)\to\mathbb C\), let
\[
\mathcal J_{\hbar,\chi}
:=
\overline{
\mathcal D_{\hbar}\bigl(\operatorname{Rep}^{\mathbb C Q}_{\mathbf d}\bigr)
\left\{
\tau_{\hbar}(v)-\hbar\chi(v)
\mid
v\in\mathfrak{gl}_{\mathbf d}(\mathbb C)
\right\}}
\]
be the closed left ideal generated by the shifted quantum moment-map relations. Since \(GL_{\mathbf d}(\mathbb C)\) is reductive, taking invariants commutes with the corresponding quotient modulo \(\hbar^m\) for every \(m\). Taking the inverse limit gives the same statement after \(\hbar\)-adic completion. We may therefore apply Holland’s result in the following form.

We use Holland's result in the \(\hbar\)-adically completed setting.
\begin{proposition}[{\cite[Proposition~2.4]{Hol1999}}]
\label{prop: holland}
Suppose that \(Q\) is a finite quiver, \(\bd\) is a dimension vector,
and \(\chi\) is a character of \(\gl_\bd\) that vanishes on
\(\ker\tau_{\mathrm{cl}}\).  If \(\mu_{\mathrm{eff}}\)
is flat, then
\[
\left(
\mathcal D_\hbar(\Rep^{\bbC Q}_\bd)/\mathcal J_{\hbar,\chi}
\right)^{\gl_\bd}
\]
is a formal quantization of \(\mathbb C[\mM_{\mathbf d}(Q)]\).
\end{proposition}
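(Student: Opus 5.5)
The plan is to prove flatness of the quantum reduction by comparing it modulo \(\hbar\) with the classical Koszul resolution. Flatness of \(\mu_{\mathrm{eff}}\) will force the quantum Chevalley--Eilenberg complex to be acyclic, which gives \(\hbar\)-torsion freeness. Invariants are then handled using reductivity.

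First I will reduce to the effective group. Write \(\mathfrak k=\ker\tau_{\mathrm{cl}}\); for a doubled quiver this is the diagonal scalars. Since \(\chi|_{\mathfrak k}=0\), the generators \(\tau_\hbar(v)-\hbar\chi(v)\) with \(v\in\mathfrak k\) vanish identically, so they impose no relation. Without this hypothesis such a generator would be a nonzero scalar multiple of \(\hbar\), and \(\mathcal J_{\hbar,\chi}\) would contain \(\hbar\). Hence \(\mathcal J_{\hbar,\chi}\) is generated by \(\mu_\hbar(v_k)-\hbar\chi(v_k)\) for a basis \(v_1,\dots,v_r\) of a complement \(\mathfrak g'\cong\mathfrak{pgl}_\bd\). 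Modulo \(\hbar\), these elements reduce to the components \(\mu_1,\dots,\mu_r\in\bbC[T^*\Rep^{\bbC Q}_\bd]\) of \(\mu_{\mathrm{eff}}\). Because \(\mu_{\mathrm{eff}}\) is flat and \(\bbC[T^*\Rep^{\bbC Q}_\bd]\) is a polynomial ring, hence Cohen--Macaulay, the elements \(\mu_1,\dots,\mu_r\) form a regular sequence. Therefore the Koszul complex \(K_\bullet=\bbC[T^*\Rep]\otimes\wedge^\bullet\mathfrak g'\) resolves \(\bbC[\mu^{-1}(0)]\).

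The key step, and the one I expect to be the main obstacle, is lifting this exactness to the quantum setting. Consider the quantum Koszul complex \(K^\hbar_\bullet=\mD_\hbar(\Rep^{\bbC Q}_\bd)\otimes\wedge^\bullet\mathfrak g'\), where the Koszul differential is corrected by the \(\hbar\)-commutator terms coming from \(\mathfrak g'\). I will check that these are genuine differentials using Definition~\ref{def: quantum moment map} and the fact that \(v\mapsto\mu_\hbar(v)-\hbar\chi(v)\) is a quantum moment map (Proposition~\ref{prop: chi + habr br}(2)). The complex \(K^\hbar_\bullet\) is a complex of topologically free \(\bbC[[\hbar]]\)-modules whose reduction modulo \(\hbar\) is \(K_\bullet\). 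Its \(H_0\) is \(\mD_\hbar/\mathcal J_{\hbar,\chi}\). I will then run a standard Nakayama/spectral-sequence argument for the \(\hbar\)-adic filtration: since \(K_\bullet\) is exact in positive degrees, \(K^\hbar_\bullet/\hbar^m\) is exact in positive degrees for every \(m\), by induction on \(m\) using the short exact sequences \(0\to\hbar^{m-1}K/\hbar^m K\to K/\hbar^m K\to K/\hbar^{m-1}K\to0\). Passing to the limit (Mittag-Leffler holds because all transition maps are surjective) gives that \(\mD_\hbar/\mathcal J_{\hbar,\chi}\) is \(\hbar\)-torsion free and \(\hbar\)-adically complete. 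The same argument identifies its reduction modulo \(\hbar\) with \(\bbC[T^*\Rep]/(\mu_1,\dots,\mu_r)=\bbC[\mu^{-1}(0)]\); torsion freeness of \(H_0\) follows from the vanishing of \(H_1\) of the reduction, which is the \(\mathrm{Tor}_1\) obstruction.

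Finally I take invariants. As recorded before the proposition, reductivity of \(\GL_\bd(\bbC)\) means that \((-)^{\gl_\bd}\) commutes with reduction modulo \(\hbar^m\) and with the inverse limit. Hence \(A_\hbar:=(\mD_\hbar/\mathcal J_{\hbar,\chi})^{\gl_\bd}\) is flat and complete, with \(A_\hbar/\hbar A_\hbar\cong\bbC[\mu^{-1}(0)]^{\GL_\bd}=\bbC[\mM_\bd(Q)]\). This verifies conditions (1)--(3) of Definition~\ref{def: quant Poi}. The invariants form an algebra: for invariant \(a\) one has \(\mathcal J_{\hbar,\chi}a\subset\mathcal J_{\hbar,\chi}\), because \([\mu_\hbar(v),a]=0\) on invariants, and so \(A_\hbar\cong\mD_\hbar^{\gl_\bd}/(\mathcal J_{\hbar,\chi})^{\gl_\bd}\). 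For condition (4), lift invariant classes to invariant elements of \(\mD_\hbar\). There, \(-\hbar^{-1}[a,b]\bmod\hbar\) is the standard Poisson bracket on \(T^*\Rep^{\bbC Q}_\bd\), and restricting this bracket to invariants and descending modulo \((\mu)\) is exactly the definition of the reduced Poisson bracket on \(\bbC[\mM_\bd(Q)]\).
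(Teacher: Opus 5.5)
Your proposal is correct. It differs from the paper mainly in that the paper gives no argument of its own. The paper cites Holland's Proposition~2.4, which is an uncompleted, order-filtered statement. In the paragraph before the statement it then remarks that, since $\GL_\bd(\bbC)$ is reductive, taking invariants commutes with reduction modulo $\hbar^m$ and with the inverse limit, so Holland's result can be read in the $\hbar$-adically completed setting.

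You instead prove the statement directly in the completed setting, in four steps:
\begin{enumerate}
\item You use $\chi|_{\ker\tau_{\mathrm{cl}}}=0$ to discard the trivially acting part, correctly observing that otherwise $\hbar\in\mathcal J_{\hbar,\chi}$.
\item You convert flatness of $\mu_{\mathrm{eff}}$ into a regular sequence.
\item You deform the Koszul resolution into the quantum Chevalley--Eilenberg complex.
\item You use induction modulo $\hbar^m$ and the Milnor/Mittag-Leffler sequence to show that $\mathcal D_\hbar(\Rep^{\bbC Q}_\bd)/\mathcal J_{\hbar,\chi}$ is $\hbar$-torsion free and complete, with special fibre $\bbC[\mu^{-1}(0)]$.
\end{enumerate}
Only your final invariants step coincides with the paper's remark.

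This is essentially the $\hbar$-adic (Rees) form of the Koszul-strictness argument that underlies Holland's result. Your route buys self-containedness and explicit control of two points the paper's one-line transfer does not spell out. First, the left ideal generated by the finitely many elements $\tau_\hbar(v_k)-\hbar\chi(v_k)$ is already closed; this follows from your completeness of $H_0$, so $H_0$ of your complex really is the quotient by the closed ideal $\mathcal J_{\hbar,\chi}$. Second, $\hbar$-torsion freeness of this quotient is exactly where flatness is used. The paper's route buys brevity.

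Two small points to tidy:
\begin{enumerate}
\item $\ker\tau_{\mathrm{cl}}$ is the diagonal scalars only when $Q$ is connected. In general it contains one scalar per connected component, and all of $\gl_{d_i}$ at an isolated vertex.
\item Choose $\mathfrak g'$ as a complementary ideal, or replace it by $\gl_\bd/\ker\tau_{\mathrm{cl}}$, so that the Chevalley--Eilenberg differential is defined.
\end{enumerate}
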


The completed quotient in Proposition \ref{prop: holland} is defined even
without the flatness hypothesis. We call
\[
\left(
\mathcal D_\hbar(\Rep^{\bbC Q}_\bd)/\mathcal J_{\hbar,\chi}
\right)^{\gl_\bd(\bbC)}
\]
the \emph{quantum Hamiltonian reduction} associated with \(\chi\).  We
denote this algebra by \({\mM_\bd(Q)}_{\hbar,\chi}\).

\subsection{The flatness problem}
Holland's work identifies the representation side quantum Hamiltonian reduction with a quantization of the classical quiver variety only under the stated flatness hypothesis.
For the Jordan quiver and \(n>1\), \(\mu_{\mathrm{eff}}\) is not flat.
Indeed, its zero fiber contains the locus with \(X\) regular
of dimension \(n^2+n\), whereas a flat fiber would have dimension
\[
2n^2-\dim\mathfrak{sl}_n(\bbC)=n^2+1;
\]
the zero fiber has excess dimension.
Without flatness, Holland's theorem does not establish that the representation-side quantum reduction is a quantization of the classical affine quotient.

However,
the noncommutative reduction itself requires no flatness assumption.
Section \ref{subsec: necklace} showed that preprojective algebras arise by noncommutative Hamiltonian reduction, which leads naturally to a noncommutative form of ``quantization commutes with reduction''. This was proved by one of the authors in \cite{Zhao2021Com,Zhao2023Non}.

In \cite{Zhao2021Com,Zhao2023Non},
Zhao proposed the noncommutative quantum Hamiltonian reduction.
He constructed a two-sided ideal \(\mathcal{I^{\pol}}_{\hbar}\)
and proved the quotient algebra \(\qneck^\pol / \mathcal{I^{\pol}}_{\hbar} \)
quantizes \(\Pi Q\) at the polynomial level.
Under the flatness assumption on the classical moment map and for a suitable choice of character,
Zhao (\cite{Zhao2021Com,Zhao2023Non}) proved compatibility with the Kontsevich--Rosenberg principle, expressed by the following commutative cube:
	\begin{equation}\label{com cubic}
		\begin{split}
			\xymatrixrowsep{0.8pc}
			\xymatrixcolsep{1.2pc}
			\xymatrix{
				\qneck^\pol \ar@{-->}[rd] \ar[rr]^-{\Tr^q}  &&
				\mD^\pol _{\hbar}(\Rep^{\bbC Q} _{\bd}) \ar@{-->}[rd] \\
				&\Pi Q^\pol _{\hbar, \br} \ar[rr]^{\mathrm{Tr}^{q}}
				&&
				\mM^\pol _{\bd}(Q)_{\hbar, \chi_{\br}} \\
				\QQ \ar@{~>}[uu] \ar@{-->}[rd] \ar'[r][rr]^-{\mathrm{Tr}}&&
				\bbC[T^\ast \Rep^{\bbC Q} _\bd] \ar@{~>}'[u][uu] \ar@{-->}[rd] \\
				& \Pi Q \ar@{~>}[uu] \ar[rr]^{\mathrm{Tr}}
				&& \mathbb{C}[\mM _{\mathbf{d}}(Q)] \ar@{~>}[uu]
			}	
		\end{split}
	\end{equation}

For the reader's convenience,
we now specialize to the Jordan quiver and give an \(\hbar\)-adic version of his construction.
Note that in the Jordan case, the base ring \(R = \bbC e\),
\(\QQ=A=\bbC\langle x,y\rangle\), \(\bw=xy-yx\), and
\(\Pi Q=\bbC[x,y]\).
For a  path
\(s=a_1\cdots a_m \in \QQ \),
set
\[
\widehat{[s]}:=(a_1,1)\cdots(a_m,m) \in \qneck.
\]
The path lift is extended linearly in \(s\).
In particular,
we denote the lifted vertex element by \(E\). 
Define the two-sided ideal of \(\qneck^\pol\):
\begin{equation}\label{eq:jordan-relations}
\mathcal{I}_\hbar^{\pol}:=\langle \widehat{[sxy]}-\widehat{[syx]}+\hbar \widehat{[s]}*E \mid s \in \QQ \rangle.
\end{equation}
Set
\[
\Pi Q^\pol _{\hbar} = \qneck^\pol / \mathcal{I}^\pol _\hbar,
\qquad
\Pi Q_\hbar : = \varprojlim_m
\Pi Q^\pol _{\hbar} / \hbar^m\Pi Q^\pol _{\hbar}.
\]
Note that in general \(\widehat{[s]}\ne\ell_\hbar([s])\),
since the latter depends on the PBW basis.

First,
normalize \(\widehat{[s]}\) in the fixed PBW
basis before applying \(\pr\) (see Theorem \ref{thm: schedler's}).
Skein relations contribute terms with \(\hbar\);
hence
\[
\left.\pr(\widehat{[sxy]}-\widehat{[syx]}+\hbar \widehat{[s]}*E)\right|_{\hbar=0}=[s\bw].
\]
Therefore the quotient identity gives
\[
\Pi Q_\hbar^{\pol} / \hbar \Pi Q_\hbar^{\pol}
\cong\qneck^{\pol}/
 (\mathcal I_\hbar^{\pol}+\hbar \qneck^{\pol})
\cong\Sym(\QQ _\cyc)/\langle[s\bw]\mid s \in \QQ \rangle.
\]
The kernel of \(\QQ_\cyc \to (\Pi Q)_\cyc\) is the image of
\(A\bw A\), spanned by \([p\bw]\)
since
\([a\bw b]=[ba\bw]\).
Taking symmetric algebras proves
\begin{equation}\label{eq: preproj special fibre}
	\Pi Q_\hbar^{\pol} / \hbar \Pi Q_\hbar^{\pol}
\cong\Sym((\Pi Q)_\cyc)=\Sym(\bbC[x,y]).
\end{equation}

Second, fix a dimension \(n\). The compatibility studied in \cite{Zhao2021Com,Zhao2023Non} motivates the following direct calculation with our fixed conventions. Set
\[
T:=XD-DX+\hbar nI_n,
\qquad X=(x_{ij}),\quad D=(\partial_{x_{ji}}),\quad S=s(X,D).
\]
Here \(S\) is obtained by substituting \(X,D\) into the written word \(s\) in that order. The Weyl relations and \eqref{for: tau} give
\[
T_{ij}=\sum_k\bigl(x_{ik}\partial_{x_{jk}}-x_{kj}\partial_{x_{ki}}\bigr)
=-\tau_\hbar(e_{ji}).
\]
Therefore
\begin{align*}
\Tr^q\bigl(\widehat{[sxy]}-\widehat{[syx]}+\hbar\widehat{[s]}*E\bigr)
&=\operatorname{tr}(ST)\\
&=-\sum_{i,j}S_{ij}\tau_\hbar(e_{ij}).
\end{align*}
The coefficients \(S_{ij}\) occur on the left, so this element lies in the left ideal \(\mathcal J_{\hbar,0}^{\pol}\). Every quantum trace is invariant and therefore commutes with each \(\tau_\hbar(v)\). 
Multiplicativity of \(\Tr^q\) now gives the inclusion for the entire two-sided ideal:
\begin{equation}\label{eq: qTr descends}
	\Tr^q(\mathcal I_\hbar^{\pol})
\subseteq
\mathcal J_{\hbar,0}^{\pol}
\cap\mD_\hbar^{\pol}(\gl_n)^{\GL_n},
\end{equation}
where \(\mD_\hbar^{\pol} (\gl_n (\bbC))\) denotes the uncompleted Weyl  algebra
and \(\mathcal J_{\hbar,0}^{\pol}
=\mD_\hbar^{\pol}(\gl_n)\tau_\hbar(\gl_n)\).

Main results in \cite{Zhao2021Com,Zhao2023Non} in the Jordan case are reformulated as the following theorem.
\begin{theorem}[Zhao]
\label{thm: noncom quant red for quiver}
Fix any total order on pairs \((a,b)\in\bbZ_{\geq0}^2\) with
\(a+b>0\).
The images of the ordered products
\begin{equation}\label{eq:preproj pbw basis}
E^{*m} * \widehat{[x^{a_1}y^{b_1}]} * \cdots * \widehat{[x^{a_k}y^{b_k}]},
\quad
(a_1,b_1)\leq\cdots\leq(a_k,b_k),\quad m,k\geq0,
\end{equation}
form a \(\bbC[\hbar]\)-basis of \(\Pi Q_\hbar^{\pol}\).
In particular,
\(\Pi Q_\hbar\) is a noncommutative quantization of
\(\Pi Q\).
\end{theorem}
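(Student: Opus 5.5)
We prove the PBW statement for \(\Pi Q_\hbar^{\pol}\) in two halves, spanning and linear independence, and then deduce the quantization statement.

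\textbf{Spanning.} By Theorem \ref{thm: schedler's}, \(\qneck^{\pol}\) is spanned over \(\bbC[\hbar]\) by ordered \(*\)-products of lifted cyclic words \(\widehat{[w]}\) and powers of \(E\). The extra factor \(E\) is central, since \(\Tr^q\) sends it to \(n\) and, combinatorially, a vertex component carries no arrows that could take part in a skein relation.

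We then induct on total length and on a filtration by \(\hbar\)-degree. Modulo \(\mathcal I_\hbar^{\pol}\) we may use \(\widehat{[sxy]}\equiv\widehat{[syx]}-\hbar\widehat{[s]}*E\). Together with cyclic invariance of lifted words, this lets us sort the letters of any cyclic word into the form \(x^ay^b\), up to terms of \(\hbar\)-multiples and shorter words. Moving a height past another is controlled by the skein relations, which also produce only \(\hbar\)-multiples of lower-length terms. Reordering the factors \(\widehat{[x^{a}y^{b}]}\) into the fixed total order costs commutators, and each commutator is \(\hbar\) times an element of smaller length.

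A double induction, on length and then on \(\hbar\)-order, shows that the products in \eqref{eq:preproj pbw basis} span. The length is bounded in each step, so the induction terminates at the polynomial level.

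\textbf{Independence.} First we reduce modulo \(\hbar\). By \eqref{eq: preproj special fibre} the images of \eqref{eq:preproj pbw basis} become the monomials \(E^m[x^{a_1}y^{b_1}]\cdots[x^{a_k}y^{b_k}]\) in \(\Sym(\bbC[x,y])\). These form a basis because \(\{x^ay^b\}\) is a basis of \(\bbC[x,y]\) and the vertex class \(1=[e]\) plays the role of \(E\). So the images are independent over \(\bbC\) modulo \(\hbar\).

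To lift this to independence over \(\bbC[\hbar]\), we need \(\Pi Q_\hbar^{\pol}\) to be \(\hbar\)-torsion-free. This is the main obstacle: spanning plus independence of reductions does not by itself exclude torsion.

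We would first try to prove torsion-freeness by the diamond lemma (Bergman) applied to the rewriting system above. The reductions are the reordering of letters into \(x^ay^b\) form, the reordering of factors into the fixed order, and the skein moves. The main check is that the overlap ambiguities resolve: two reorderings inside one cyclic word, and a reordering interacting with a height swap. The resolution should follow because the two relation families are compatible. The skein relations encode commutators of Weyl-type operators, and the relation \(\widehat{[sxy]}-\widehat{[syx]}+\hbar\widehat{[s]}*E\) corresponds to \(\operatorname{tr}(ST)\), by the computation preceding \eqref{eq: qTr descends}.

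As an alternative route to independence, we would build a representation. We would construct an \(\hbar\)-adic filtered action of \(\qneck^{\pol}\) on the free \(\bbC[\hbar]\)-module with the proposed basis, making the ideal act by zero. The leading terms of this action are the symmetric-algebra multiplication on \(\Sym(\bbC[x,y])\). Once independence holds, \(\Pi Q_\hbar^{\pol}\) is free on this basis, hence flat.

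\textbf{Completion and quantization.} Completing \(\hbar\)-adically gives a topologically free \(\Pi Q_\hbar\). Defining \(\ell_\hbar\) on the symmetric monomial basis then yields an isomorphism \(\Sym((\Pi Q)_\cyc)[[\hbar]]\to\Pi Q_\hbar\), and \eqref{eq: preproj special fibre} gives the isomorphism of algebras modulo \(\hbar\).

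The bracket condition descends from \eqref{nc  pic}. By Proposition \ref{prop: noncom Hamil on quiver}, the map \(\QQ_\cyc\to(\Pi Q)_\cyc\) is a Lie morphism, so the semiclassical bracket on the quotient agrees with the Poisson bracket extended from \((\Pi Q)_\cyc\).
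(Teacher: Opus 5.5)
Your spanning argument and the mod-$\hbar$ independence are fine. The paper gets spanning more cheaply, from the special fibre \eqref{eq: preproj special fibre} together with the grading in which arrows have degree $1$ and $\hbar$ has degree $2$, but your length/$\hbar$-order induction amounts to the same thing.

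The gap is exactly where you locate it: linear independence over $\bbC[\hbar]$, equivalently the absence of $\hbar$-torsion in $\Pi Q_\hbar^{\pol}$, is never proved. Neither of your two routes is carried out.
\begin{itemize}
\item For the diamond lemma, $\qneck^{\pol}$ is not a free algebra with an obvious reduction system. It is a quotient of a symmetric algebra on heighted cyclic words by skein relations. You would have to fix a compatible ordering and resolve every ambiguity between skein moves and the two-sided ideal $\mathcal I^{\pol}_\hbar$. The sentence ``the resolution should follow because the two relation families are compatible'' is an assertion, not a check.
\item The action on the free module with the proposed basis is circular until it is constructed, and constructing it is as hard as the theorem itself.
\end{itemize}
Your remark that the defining relation maps to $\operatorname{tr}(ST)$ only shows that the trace maps descend to the quotient. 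By itself it gives no faithfulness.

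The missing idea is to detect torsion in a torsion-free target, using all ranks at once. Suppose $\sum_j \hbar^{r} g_j(\hbar) M_{\lambda_j}=0$ with some $g_j(0)\neq 0$.
\begin{itemize}
\item By \eqref{eq: qTr descends}, composing $\Tr^q$ with $\hc$ gives, for every $n$, an algebra map $\widetilde{\Tr^q}^{\pol}\colon\Pi Q^{\pol}_\hbar\to\mD^{\pol}_\hbar(\fkh)^{S_n}$. Its target is $\hbar$-torsion-free, so you may cancel $\hbar^{r}$ \emph{after} applying it.
\item Reduce modulo $\hbar$, where $E\mapsto n$ and $\widehat{[x^ay^b]}\mapsto\sum_i x_i^ay_i^b$. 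This shows that $S=\sum_j g_j(0)\overline M_{\lambda_j}$ has vanishing diagonal trace for every $n$.
\item A single $n$ is not enough: the class of $E$ minus $n$ times the unit already lies in the kernel for that $n$. This is why the paper proves Proposition \ref{prop: joint inj}.
\item In that proposition, take a top-length monomial $\{(a_1,b_1),\dots,(a_k,b_k)\}$ of $S$, with polynomial coefficient $c_{\lambda_*}$ in the class of $E$. Choose $n\geq k$ with $c_{\lambda_*}(n)\neq 0$. Then the monomial $\prod_{t=1}^k u_t^{a_t}v_t^{b_t}$ has nonzero coefficient in the diagonal restriction.
\item This forces $S=0$, contradicting the mod-$\hbar$ independence.
\end{itemize}
With this step supplied, the rest of your argument (freeness, completion, quantization) goes through as written.
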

For the reader's convenience,
we give a self-contained proof of the theorem.
The following is a proposition needed in the proof.
\begin{proposition}\label{prop: joint inj}
Let
\(\Com_n\) be the affine
scheme defined by commuting \(n \times n\)-matrices \((X,Y)\).
Let
\[
\Tr:
\Sym(\bbC[x,y]) \to
\bbC [\Com_n // \GL_n]
\]
be the classical trace map.
If \(S\in \Sym(\bbC [x,y])\) satisfies
\(\Tr(S)=0\) for all sufficiently large
integers \(n\), then \(S=0\).
\end{proposition}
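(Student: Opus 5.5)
Since \(\Com_n//\GL_n\) maps to \(\fkh\times\fkh//S_n\) (restriction to pairs of diagonal matrices), I will reduce to a statement about multisymmetric functions. The trace of \([x^ay^b]\) is a function on \(\Com_n\). Restricting it to diagonal pairs \((\mathrm{diag}(x_1,\dots,x_n),\mathrm{diag}(y_1,\dots,y_n))\) gives the polarized power sum
\[
P_{a,b}^{(n)}:=\sum_{i=1}^n x_i^a y_i^b .
\]
The trace map on \(\Sym(\bbC[x,y])\) is multiplicative, so it suffices to show the following. Write \(S\) as a polynomial in the symbols \(u_{a,b}:=[x^ay^b]\), with \(u_{0,0}=[1]\) mapping to the constant \(n\). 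If \(S(P^{(n)})=0\) in \(\bbC[x_1,\dots,x_n,y_1,\dots,y_n]\) for all large \(n\), then \(S=0\). Only finitely many monomials \(x^ay^b\) occur in \(S\), so \(S\) lies in a polynomial ring \(\bbC[u_{a,b}:(a,b)\in F]\) with \(F\) finite.

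First I will separate off \(u_{0,0}\). Write \(S=\sum_k u_{0,0}^k S_k\), where each \(S_k\) involves only the \(u_{a,b}\) with \((a,b)\neq(0,0)\). The vanishing hypothesis then reads \(\sum_k n^k S_k(P^{(n)})=0\).

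The key step is algebraic independence of the polarized power sums \(P_{a,b}^{(n)}\), \((a,b)\in F\setminus\{(0,0)\}\), once \(n\ge |F|\). I plan to prove this with a Jacobian criterion. Write \(a_j,b_j\) for the exponent pair of the \(j\)-th element of \(F\setminus\{(0,0)\}\). Consider the Jacobian of these functions with respect to the variables \(x_i\) only, where \(i\) ranges over \(m=|F|-1\) chosen indices. Set \(y_i=1\) and specialize generically. The column for \(x_i\) then has entries \(a_j x_i^{a_j-1}\). This fails when several pairs share the same \(a\) or have \(a=0\), so I will instead differentiate along a generic direction, or simply use the substitution \(x_i=t_i\), \(y_i=t_i^{N}\) with \(N\) large. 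With this substitution the \(P_{a,b}\) become ordinary power sums \(p_{a+Nb}\) in the variables \(t_i\). The exponents \(a+Nb\) are distinct and positive, and ordinary power sums of distinct positive degrees in \(m\le n\) variables are algebraically independent, by the Vandermonde-type Jacobian. This gives algebraic independence of the \(P_{a,b}\).

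The dependence on \(n\) remains to be handled. For fixed \(n_0\ge|F|\), taking only the first \(n_0\) variables nonzero is not the same as changing \(n\), because \(u_{0,0}\) still maps to \(n\). I therefore fix the substitution above with variables \(t_1,\dots,t_n\) and set \(t_{m+1}=\dots=t_n=0\). The hypothesis then gives \(\sum_k n^k S_k(p^{(m)})=0\) for all large \(n\), where \(p^{(m)}\) denotes the power sums in \(t_1,\dots,t_m\) and does not depend on \(n\). This is a polynomial identity in \(n\) with coefficients in \(\bbC[t_1,\dots,t_m]\), valid for infinitely many \(n\), so each \(S_k(p^{(m)})=0\). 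Algebraic independence then gives \(S_k=0\) for every \(k\), hence \(S=0\).

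I expect the main obstacle to be justifying that the diagonal substitution is allowed, namely that \(\Tr\) is computed by restricting to pairs of diagonal matrices and setting some entries to zero. This is immediate because pairs of diagonal matrices lie in \(\Com_n\) and traces of words in \(X\) and \(Y\) at such a pair are exactly the polarized power sums. A second point needing care is that the exponents \(a+Nb\) avoid \(0\) for \((a,b)\neq(0,0)\); this holds for any \(N\ge1\), and choosing \(N>\max a\) makes them distinct.
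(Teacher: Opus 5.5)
Your proof is correct, but it takes a different route from the paper's.

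\textbf{The paper's route.} The paper does not prove any algebraic independence. It writes $S=\sum_\lambda c_\lambda(z_{0,0})\prod_{(a,b)\in\lambda}z_{a,b}$ and picks a multiset $\lambda_*$ of maximal size $k$. It then fixes a \emph{single} $n\geq\max\{n_0,k\}$ with $c_{\lambda_*}(n)\neq0$. From the diagonal restriction it extracts the coefficient of $\prod_{r=1}^k u_r^{a_r}v_r^{b_r}$. This monomial occupies exactly $k$ indices, so only $\lambda_*$ can produce it, and its coefficient is $c_{\lambda_*}(n)\prod_\alpha m_\alpha!\neq0$.

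\textbf{Your route.} You handle the $u_{0,0}\mapsto n$ dependence by using infinitely many $n$: you read the identity as a polynomial in $n$ with coefficients in $\bbC[t_1,\dots,t_m]$. You then collapse the two-variable problem to one variable with the Kronecker-type substitution $y_i=x_i^N$, $N>\max a$. This sends the $P_{a,b}$ to ordinary power sums $p_{a+Nb}$ of distinct positive degrees. Their algebraic independence in $m$ variables follows from the Jacobian criterion. The one point worth writing out is that the generalized Vandermonde $\det(t_i^{d_j-1})$ is nonzero: the exponents $d_j-1$ are distinct, so the Leibniz expansion consists of distinct monomials and nothing cancels.

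\textbf{What each buys.} The paper's argument is more elementary, using only monomial bookkeeping and no Jacobian criterion. It also needs the hypothesis for just one suitable $n$. Yours needs it for more than $\deg_{u_{0,0}}S$ values of $n\geq m$. In exchange, you prove a cleaner and reusable structural fact: any finite family of polarized power sums $P_{a,b}$ with $(a,b)\neq(0,0)$ is algebraically independent once $n$ is at least its cardinality. Your bound on $n$ is the number of distinct generators occurring in $S$, whereas the paper's is the maximal number of non-unit factors in a monomial. Neither bound dominates the other.

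\textbf{A wording slip.} The morphism of quotients goes $(\fkh\times\fkh)//S_n\to\Com_n//\GL_n$, not the other way. What you actually use is pullback of functions along $\fkh\times\fkh\to\Com_n$, and that is fine.
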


\begin{proof}
For \(a,b\geq0\), write \(z_{a,b}=x^ay^b\), viewed as a
generator of the symmetric algebra. Thus
\[
\operatorname{Sym}(\mathbb C[x,y])
=\mathbb C[z_{a,b}]_{a,b\geq0}.
\]

Suppose, for a contradiction, that \(S \neq 0\), and choose
\(n_0 \geq 1\) such that \(\Tr( S)=0\) whenever
\(n\geq n_0\).
By definition,
one can write
\[
S=\sum_{\lambda\in\Lambda}c_\lambda(z_{0,0})
\prod_{(a,b)\in\lambda}z_{a,b}.
\]

Here \(\Lambda\) is a finite set of distinct finite multisets of pairs
\((a,b)\in\bbZ_{\geq0}^2\setminus\{(0,0)\}\), and each
\(c_\lambda(t)\in\mathbb C[t]\) is nonzero. 
Choose any \(\lambda_*\in\Lambda\) with
\[
k:=|\lambda_*|=\max_{\lambda\in\Lambda}|\lambda|.
\]
Here \(|\lambda|\) counts indices in \(\lambda\) itself.
We set
\(\lambda_*\) as \(\{(a_1,b_1),\ldots,(a_k,b_k)\}\).
Since \(c_{\lambda_*}\) is a nonzero polynomial, there exists
an integer
\begin{equation}\label{assp n}
	n\geq\max\{n_0,k,1\}
\quad\text{such that}\quad c_{\lambda_*}(n)\neq0.
\end{equation}
If \(k=0\), then \(S=c_{\lambda_*}(z_{0,0})\), so
\(\Tr(S)=c_{\lambda_*}(n)\neq0\), a contradiction.
Hence we may assume that \(k\geq1\).

Let \(u_1,\ldots,u_n,v_1,\ldots,v_n\) be independent
indeterminates. The diagonal matrices
\[
X=\operatorname{diag}(u_1,\ldots,u_n),
\qquad
Y=\operatorname{diag}(v_1,\ldots,v_n)
\]
commute, and therefore define a morphism
\(\bbC^n \times \bbC^n \to \Com_n \to \Com_n //\GL_n\).
Pulling back the identity \(\Tr(S)=0\)
along this morphism gives
\begin{equation}\label{for: Tr S on diag}
	0=\sum_{\lambda\in\Lambda}c_\lambda(n)
\prod_{(a,b)\in\lambda}
\left(\sum_{i=1}^n u_i^av_i^b\right)
\end{equation}
in \(\mathbb C[u_1,\ldots,u_n,v_1,\ldots,v_n]\).

For a monomial in the variables \(u_i,v_i\), call an index
\(i\) occupied if the exponent of \(u_i\) or of \(v_i\)
is positive. 
For example, in the polynomial
\[
\left(\sum_{i=1}^n u_i^2\right)
\left(\sum_{j=1}^n v_j\right)
=\sum_{i=1}^n u_i^2v_i
 +\sum_{\substack{1\leq i,j\leq n\\ i\neq j}}u_i^2v_j,
\]
\(u_1^2v_1\) occupies only index \(1\); whereas \(u_1^2v_2\)
occupies indices \(1\) and \(2\).

Our fixed \(\lambda_\ast = \{(a_1,b_1),\ldots,(a_k,b_k)\}\) contributes a special monomial
\[
m_*=\prod_{r=1}^{k}u_r^{a_r}v_r^{b_r}.
\]
Since \(a_r+b_r>0\) for every \(r\),
the occupied indices of \(m_*\) are exactly \(1,\ldots,k\).
Since an expanded term arising from the summand indexed by
\(\lambda\) occupies at most \(|\lambda|\) indices,
only summands with \(|\lambda|=k\) can contribute to
\(m_*\), and each such contribution must assign the \(k\)
factors to distinct indices.
Its multiset of exponent pairs must consequently equal
\(\{(a_1,b_1),\ldots,(a_k,b_k)\}\), including multiplicities.
This forces \(\lambda=\lambda_*\).

For each exponent pair \(\alpha\) occurring in \(\lambda_\ast\),
let \(m_\alpha\) be its multiplicity.
The coefficient of
\(m_*\) in the displayed polynomial \eqref{for: Tr S on diag} is therefore
\[
c_{\lambda_*}(n)\prod_\alpha m_\alpha!,
\]
which is nonzero due to \eqref{assp n}. This contradicts the polynomial identity
above. Hence \(S=0\).
\end{proof}
The main idea of the proof is that, for \(n\) chosen as in \eqref{assp n}, the restriction of \( \Tr(S) \) to diagonal matrices must vanish, whereas \(n\geq k\) and \(c_{\lambda_*}(n)\ne0\) produce a monomial with nonzero coefficient.

\begin{proof}[Proof of Theorem \ref{thm: noncom quant red for quiver}]
We prove the polynomial basis assertion first and then verify the formal quantization statement after completion.

Recall that
\eqref{eq: preproj special fibre} gives
\[
E\bmod\hbar=z_{0,0},
\qquad
\widehat{[x^ay^b]}\bmod\hbar=z_{a,b}.
\]
Note that \(z_{0,0}\) is a generator of the symmetric
algebra \(\Sym ((\Pi Q)_{\mathrm{cyc}})
 =\Sym(\mathbb C[x,y])\),
distinct from its scalar unit.

Let
\[
\{M_\lambda = E^{*m}*\widehat{[x^{a_1}y^{b_1}]}*\cdots*\widehat{[x^{a_k}y^{b_k}]}\ \big\vert\ \lambda = \big( m , (a_i, b_i) \big) \}
\]
be the family of ordered products
in the statement, viewed in \(\Pi Q^\pol _\hbar \).
Modulo \(\hbar\),
their residues
\(\overline M_\lambda\) are precisely the monomials
\[
z_{0,0}^{\,m}z_{a_1,b_1}\cdots z_{a_k,b_k},
\qquad
(a_1,b_1)\leq\cdots\leq(a_k,b_k),\qquad m,k\geq0,
\]
with \(a_i+b_i>0\). Hence they form a \(\mathbb C\)-basis of \(\Sym(\mathbb C[x,y])\).

Next, we prove that the \(M_\lambda\) span \(\Pi Q^\pol _\hbar\)
over
\(\mathbb C[\hbar]\).
On the algebra \(\qneck^\pol\),
we consider the grading in which each arrow occurrence has degree
\(1\), each vertex factor has degree \(0\), and \(\hbar\) has
degree \(2\).
Both skein relations are homogeneous, since
each contraction removes two arrows.
The defining relation
\[
\widehat{[sxy]}-\widehat{[syx]}+\hbar \widehat{[s]}*E
\]
is homogeneous of degree \(|s|+2\),
so this grading descends
to a nonnegative grading \(\Pi Q^\pol _\hbar = \bigoplus_{d\geq0} (\Pi Q^\pol _\hbar)_d \).
The semiclassical limit respects the induced
grading on \(\Sym(\mathbb C[x,y])\), where \(\deg z_{a,b}=a+b\).

Let \(u \in (\Pi Q^\pol _\hbar)_d\).
Since the \(\overline M_\lambda\) form a
homogeneous basis of \(\Sym(\bbC[x,y])\),
there is a finite
\(\bbC\)-linear combination \(L_0\) of the \(M_\lambda\)
of degree \(d\) such that \(u-L_0\in\hbar \Pi Q_\hbar ^\pol\).
Repeating this argument gives
\[
u=\sum_{r=0}^{\lfloor d/2\rfloor}\hbar^rL_r,
\]
where each \(L_r\) is a finite \(\mathbb C\)-linear
combination of the \(M_\lambda\) of degree \(d-2r\).
The process terminates because the grading is nonnegative.
Since every element of \(\Pi Q_\hbar^\pol\) is a finite sum of homogeneous
elements, the required spanning assertion follows.

We prove linear independence.
In Section \ref{subsec: tilde Tr},
we introduce a radial quantum trace map and its polynomial version is written as
\[
\widetilde{\Tr^q}^\pol:\Pi Q_\hbar^{\pol}
\longrightarrow \mD_\hbar^{\pol}(\fkh)^{S_n}.
\qquad n\geq1.
\]
Note that existence of this morphism is independent of the present theorem.

Suppose
\[
\sum_{j=1}^k f_j(\hbar)M_{\lambda_j}=0
\]
is a nontrivial relation with \(f_j\in\mathbb C[\hbar]\) and pairwise
distinct \(\lambda_j\).
Let \(r\ge 0\) be the largest integer such that \(\hbar^r\) divides
every \(f_j\), and write \(f_j(\hbar)=\hbar^r g_j(\hbar)\) with
\(g_j\in\mathbb C[\hbar]\).
By construction, at least one \(g_j(0)\neq 0\).
For each \(n\), apply
\(\widetilde{\Tr^q}^\pol\) to the relation before cancelling any factor.
The target \(\mD_\hbar^\pol(\fkh)^{S_n}\) is
\(\hbar\)-torsion-free, since it is a submodule of the polynomial Weyl
algebra. We can therefore cancel \(\hbar^r\) in the target and obtain
\[
\sum_{j=1}^k g_j(\hbar)
\widetilde{\Tr^q}^\pol(M_{\lambda_j})=0.
\]
The polynomial construction in Section~\ref{subsec: tilde Tr} specializes
on \(E\) to \(n\) and on \(\widehat{[x^ay^b]}\) to
\(\sum_{i=1}^n x_i^ay_i^b\), independently of the present PBW assertion.
Consequently
\[
S:=\sum_{j=1}^k g_j(0)\overline M_{\lambda_j}
\]
has zero diagonal trace for every \(n\). By the diagonal-trace argument in the proof of
Proposition~\ref{prop: joint inj}, \(S=0\).
The linear independence of the \(\overline M_{\lambda_j}\) implies
\(g_j(0)=0\) for all \(j\), a contradiction.
This proves the basis assertion.

Finally, completion identifies \(\Pi Q_\hbar\) with
\(\Sym(\mathbb C[x,y])[[\hbar]]\) as a module,
hence it is flat and complete.
Together with the special-fiber identification
\eqref{eq: preproj special fibre} and the negative  identity
descending from \eqref{nc  pic} through the cyclic Lie quotient
of Proposition \ref{prop: noncom red},
this proves the quantization assertion.
\end{proof}

Now,
Theorem \ref{thm: noncom quant red for quiver} is summarized by the schematic commutative diagram
\begin{equation}
\begin{split}
\xymatrixcolsep{4pc}
\xymatrix{
	\qneck \ar@{-->}[r]
& {\Pi Q}_{\hbar} \\
\QQ \ar@{~>}[u] \ar@{-->}[r]& \Pi Q.
\ar@{~>}[u]}
\end{split}
\end{equation}
The horizontal arrows represent Hamiltonian reduction, while the vertical arrows represent the relevant quantizations.
Moreover, the quantum-trace-descent property  remains valid without flatness (see \eqref{eq: qTr descends}). Hence,
in the Jordan case and for any positive integer \(n\),
one always obtains a homomorphism
\[
{
\Pi Q_{\hbar} \longrightarrow \mM_n (Q)_{\hbar,0},
}
\]
although the target is not known from Holland's work to be a quantization of \(\bbC[\mM_n (Q)]\) when \(\mu_{\rm eff}\) is non-flat.

Meanwhile,
restriction to diagonal
commuting pairs gives an isomorphism of affine quotients
\[
\mM_n (Q) = \Com_n//\GL_n
\cong
(\bbC^n\oplus\bbC^n)//S_n
\cong
\rmT^*\bbC^n//S_n.
\]
We suggest \cite{GanGin2006,Eti2009Lec} for further details and references.
This suggests replacing the unavailable flatness argument with a direct comparison between \(\Pi Q_{\hbar}\) and the invariant differential operators on \(\bbC^n\).
The next section constructs this comparison.

\section{The quantum homomorphism}\label{sec: quant mor}

This section presents the principal construction of the paper and develops the explicit formulas needed later.  
We first construct the radial quantum trace morphism
\[
\widetilde{\Tr^q}:
\Pi Q_{\hbar}
\longrightarrow
\mD_\hbar(\bbC^n)^{S_n},
\]
and then compute its radial and Harish--Chandra images explicitly.

\subsection{Construction of \texorpdfstring{\({\widetilde{\Tr^q}}\)}{the radial quantum trace}}\label{subsec: tilde Tr}

We begin by recalling the radial-part and Harish--Chandra homomorphisms, which are the representation-theoretic input for the construction of \(\widetilde{\Tr^q}\).

Let \(\rmG\) be a connected complex reductive group with Lie algebra
\(\fkg\), Cartan subalgebra \(\fkh\), root system
\(R=R(\fkg,\fkh)\), and Weyl group \(W\).
Choose positive roots
\(R^+\subset R\) and set
\[
\fkh_{\reg}
:=
\fkh\setminus\bigcup_{\alpha\in R}\ker(\alpha),
\qquad
\delta:=\prod_{\alpha\in R^+}\alpha.
\]
The radial-part homomorphism
\[
\hc':\mD(\fkg)^{\fkg}\longrightarrow\mD(\fkh_{\reg})^W
\]
is characterized by
\[
\hc'(P)(f|_{\fkh})=P(f)|_{\fkh},
\qquad f\in\bbC[\fkg]^{\fkg},
\]
and the Harish--Chandra homomorphism is
\[
\hc(P)=\delta\,\hc'(P)\,\delta^{-1}.
\]

Let \(\tau_{\mathrm{ad}}:\fkg\to\mD(\fkg)\) denote the infinitesimal
adjoint action, and write
\[
\mD(\fkg)///\fkg
:=
\left(
\mD(\fkg)\big/\mD(\fkg)\tau_{\mathrm{ad}}(\fkg)
\right)^{\fkg}.
\]
Fundamental properties are summarized as follows.
\begin{theorem}\label{thm: hc regular}
Let \(\rmG\) be a connected complex reductive group with Lie algebra
\(\fkg\), Cartan subalgebra \(\fkh\), and Weyl group \(W\).
\begin{enumerate}
\item \textup{(Harish--Chandra \cite{HC1964Inv})} The map
\[
\hc:\mD(\fkg)^\fkg \to
\mD(\fkh_{\reg})^W
\]
extends the Chevalley restriction isomorphisms
\[
\bbC[\fkg]^\fkg \cong\bbC[\fkh]^W,
\qquad
\Sym(\fkg)^\fkg \cong \Sym(\fkh)^W,
\]
and its image is contained in \(\mD(\fkh)^W\).

\item \textup{(Levasseur--Stafford \cite{LevSta1995Inv,LevSta1996Ker})} 
The map \(\hc\) induces an 
isomorphism
\[
\mD(\fkg)///\fkg\xrightarrow{\sim}\mD(\fkh)^W.
\]
\end{enumerate}
\end{theorem}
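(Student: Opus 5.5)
The statement has two parts, attributed to Harish--Chandra and to Levasseur--Stafford. Both are classical, so the plan is to reduce them to the cited results and recheck only the points that depend on our conventions: the $\hbar$-free normalization and the definition $\hc(P)=\delta\,\hc'(P)\,\delta^{-1}$.

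For part (1), the first step is to show that $\hc'$ is well defined with values in $\mD(\fkh_{\reg})^W$. Let $P\in\mD(\fkg)^{\fkg}$. Then $P$ preserves $\bbC[\fkg]^{\fkg}$, and by Chevalley restriction $\bbC[\fkg]^{\fkg}\cong\bbC[\fkh]^W$. Over $\fkg_{\reg}$, the map $G\times\fkh_{\reg}\to\fkg$ is étale onto its image, with fibres that are $W$-torsors. Hence an invariant operator restricts to a $W$-invariant operator on $\fkh_{\reg}$, via Harish--Chandra's radial-part computation. This operator is determined by its action on $\bbC[\fkh]^W$, because invariant differential operators on $\fkh_{\reg}$ are determined by their action on $W$-invariant functions ($\bbC(\fkh)$ is Galois over $\bbC(\fkh)^W$).

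Compatibility with the Chevalley isomorphisms is then checked on generators:
\begin{itemize}
\item for $f\in\bbC[\fkg]^{\fkg}$, multiplication by $f$ is sent to multiplication by $f|_{\fkh}$, and conjugating by $\delta$ does not change this;
\item for constant-coefficient invariant operators $\partial(u)$ with $u\in\Sym(\fkg)^{\fkg}$, we use Harish--Chandra's theorem $\delta\circ\mathrm{rad}(\partial(u))\circ\delta^{-1}=\partial(u|_{\fkh})$.
\end{itemize}
For the Laplacian this is the classical computation $\mathrm{rad}(\Delta)=\delta^{-1}\Delta_{\fkh}\,\delta$, using $\Delta_{\fkh}\delta=0$ because $\delta$ is $W$-harmonic. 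In general it is the identity $\delta\cdot\mathrm{rad}(\partial(u))\cdot\delta^{-1}=\partial(\mathrm{res}\,u)$ from \cite{HC1964Inv}.

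The real content of part (1) is that the image lies in $\mD(\fkh)^W$, i.e.\ that $\hc(P)$ has no poles along the root hyperplanes. Since $\mD(\fkg)^{\fkg}$ is not generated by these two subalgebras, this cannot be inferred from generators. I would cite \cite{HC1964Inv}, and if needed Wallach's argument \cite{Wal1993Inv}. That argument rests on two facts: $\hc(P)$ preserves $\bbC[\fkh]^W$ up to $\delta$-twisting, and a $W$-invariant operator on $\fkh_{\reg}$ which maps $\delta\,\bbC[\fkh]^W$ into $\bbC[\fkh]$ (equivalently, one preserving the skew-invariants) has regular coefficients. This regularity step is the main obstacle; I would quote it rather than reprove it.

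For part (2), the first observation is that $\hc$ kills $\mD(\fkg)\tau_{\mathrm{ad}}(\fkg)\cap\mD(\fkg)^{\fkg}$. Indeed, $\tau_{\mathrm{ad}}(v)$ annihilates $\bbC[\fkg]^{\fkg}$, so any invariant operator of the form $Q\,\tau_{\mathrm{ad}}(v)$ acts by zero on invariant functions, and its radial part is therefore zero. So $\hc$ factors through $\mD(\fkg)///\fkg$. Surjectivity onto $\mD(\fkh)^W$ and the precise identification of the kernel with $(\mD(\fkg)\tau_{\mathrm{ad}}(\fkg))^{\fkg}$ are the theorems of Levasseur--Stafford \cite{LevSta1995Inv,LevSta1996Ker}, together with Wallach's result for the image; these are what I would cite. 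Surjectivity is generation of $\mD(\fkh)^W$ by $\bbC[\fkh]^W$ and $\Sym(\fkh)^W$, plus the compatibilities from part (1). The kernel statement is the deep part, and I would take it verbatim from those references.

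Finally, the $\hbar$-versions used later follow by the standard grading argument: insert powers of $\hbar$ by order of operator, and use that all maps involved are filtered.
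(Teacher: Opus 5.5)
Your overall route matches the paper, which gives no argument of its own for this theorem and simply quotes Harish--Chandra and Levasseur--Stafford (with Wallach for the image). Several of your steps are fine:
\begin{itemize}
\item the checks on the generators $\bbC[\fkg]^{\fkg}$ and $\Sym(\fkg)^{\fkg}$;
\item the factorization through $\mD(\fkg)///\fkg$, since $\mD(\fkg)\tau_{\mathrm{ad}}(\fkg)$ kills invariant functions (you also need exactness of invariants for the reductive group here);
\item surjectivity from the generation of $\mD(\fkh)^W$ by $\bbC[\fkh]^W$ and $\Sym(\fkh)^W$.
\end{itemize}

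The one mechanism you supply for the regularity statement is false. A $W$-invariant operator on $\fkh_{\reg}$ that preserves the skew-invariants $\delta\,\bbC[\fkh]^W$ need not have regular coefficients. Take $\fkh=\bbC$, $W=\{\pm1\}$, $\delta=x$, and
\[
L=x\circ\frac{d}{d(x^2)}\circ x^{-1}=\frac12\Bigl(x^{-1}\frac{d}{dx}-x^{-2}\Bigr).
\]
This operator is $W$-invariant and satisfies $L(x^{2m+1})=m\,x^{2m-1}$, so it preserves $x\,\bbC[x^2]$. Yet it is singular at $x=0$. The same happens in the paper's own coordinates for $n=2$: $\delta\,\partial_{p_2}\,\delta^{-1}$ preserves skew polynomials but has poles along $x_1=x_2$. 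Put differently, knowing that $\hc'(P)$ preserves $\bbC[\fkh]^W$ only tells you that $\hc'(P)$ is a differential operator on the quotient $\fkh/W$. That ring is strictly larger than $\delta^{-1}\mD(\fkh)^W\delta$.

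So regularity of $\hc(P)$ cannot be deduced from the $\delta$-twisted invariance property. It depends on $P$ being an invariant operator on all of $\fkg$, and that is exactly the content of Harish--Chandra's theorem. You should quote that theorem itself, or a genuine proof of it, rather than the lemma you state.

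A smaller point: $\rmG\times\fkh_{\reg}\to\fkg$ is smooth but not étale. The étale map with $W$-torsor fibres is $\rmG/T\times\fkh_{\reg}\to\fkg$, where $T$ is the maximal torus with Lie algebra $\fkh$.
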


{We shall use the preceding result in the \(\hbar\)-adic setting.
The Harish--Chandra homomorphism \(\hc\)
is filtered with respect to the order filtrations on 
differential operators.
Hence it induces a homomorphism \(\hc^{\rm Weyl}\) between the corresponding Weyl algebras.
Since this \(\hc^{\rm Weyl}\) preserves
the powers of \(\hbar\), it is continuous for the \(\hbar\)-adic topology and therefore extends uniquely to the \(\hbar\)-adic completions \(\hc_\hbar\).
By abuse of notation,
we still denote this extension by \(\hc\).}

{Moreover, by Theorem \ref{thm: hc regular}, the Harish--Chandra homomorphism factors through the adjoint quantum
Hamiltonian reduction.  Applying the Weyl algebra construction and then
\(\hbar\)-adic completion therefore gives a continuous homomorphism
\[
\hc:
\mD_\hbar(\fkg)///\fkg
\longrightarrow
\mD_\hbar(\fkh)^W .
\]
We use this completed form throughout the sequel.}

The following fact is needed in later sections, see \cite[Lemma 4.6]{Eti2009Lec} for calculation.
\begin{proposition}\label{prop: hc laplacian}
Let \(\fg\) be a reductive Lie algebra and let
\(\fkh\subset\fg\) be a Cartan subalgebra. Fix a nondegenerate
invariant symmetric bilinear form on \(\fg\), and let
\(\Delta_{\fg}\) and \(\Delta_{\fkh}\) be the Laplacians defined by
this form and its restriction to \(\fkh\). Then
\[
\hc(\Delta_{\fg})=\Delta_{\fkh}.
\]
\end{proposition}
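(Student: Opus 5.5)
The plan is to reduce the statement to the radial-part characterization of \(\hc'\), since \(\hc=\delta\,\hc'\,\delta^{-1}\) means it suffices to prove \(\hc'(\Delta_\fg)=\Delta_\fkh-\delta^{-1}\Delta_\fkh(\delta)+(\text{first-order terms})\) in a form that conjugates exactly to \(\Delta_\fkh\). The argument has three steps.

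\textbf{Step 1: local decomposition near \(\fkh_\reg\).} Fix an orthonormal basis of \(\fkh\) and root vectors \(e_\alpha\), \(\alpha\in R\), normalized so that \((e_\alpha,e_{-\alpha})=1\). Write
\[
\Delta_\fg=\Delta_\fkh+\sum_{\alpha\in R^+}2\,\partial_{e_\alpha}\partial_{e_{-\alpha}}.
\]
For \(f\in\bbC[\fg]^\fg\) and a regular \(h\in\fkh_\reg\), use the submersion \(G\times\fkh_\reg\to\fg\), \((g,h')\mapsto \mathrm{Ad}(g)h'\). This expresses the transverse derivatives \(\partial_{e_{\pm\alpha}}f\) near \(h\) through the adjoint vector fields \(\tau_{\mathrm{ad}}(e_{\mp\alpha})\), which annihilate invariant functions. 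Concretely, \(\tau_{\mathrm{ad}}(e_{-\alpha})\) at \(h\) equals \(\pm\alpha(h)\,\partial_{e_{-\alpha}}\) plus higher-order terms away from \(h\).

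\textbf{Step 2: the mixed second derivatives.} Expanding \(\partial_{e_\alpha}\partial_{e_{-\alpha}}f\) at \(h\) with \(f\) invariant, the second-order transverse part is killed by invariance. The surviving contribution is \(\alpha(h)^{-1}\partial_{h_\alpha}f(h)\), where \(h_\alpha\in\fkh\) is the coroot dual to \(\alpha\) under the form; this is the classical computation of \(\partial_{e_\alpha}\partial_{e_{-\alpha}}\) on \(\mathrm{Ad}\)-invariant functions. Summing over roots gives
\[
\hc'(\Delta_\fg)=\Delta_\fkh+\sum_{\alpha\in R^+}\frac{2}{\alpha}\,\partial_{h_\alpha}
=\Delta_\fkh+2\,\nabla(\ln\delta)\cdot\nabla .
\]
Verifying this step with exact constants and signs is the main obstacle.

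\textbf{Step 3: conjugation by \(\delta\).} A direct computation gives
\[
\delta\bigl(\Delta_\fkh+2\nabla\ln\delta\cdot\nabla\bigr)\delta^{-1}=\Delta_\fkh-\delta^{-1}\Delta_\fkh(\delta).
\]
It therefore remains to show \(\Delta_\fkh\delta=0\). The polynomial \(\Delta_\fkh\delta\) is \(W\)-anti-invariant, because \(\Delta_\fkh\) is \(W\)-invariant and \(\delta\) is anti-invariant. Hence it is divisible by \(\delta\). Its degree, however, is \(\deg\delta-2\), so \(\Delta_\fkh\delta=0\).

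Finally, the statement is independent of the choice of invariant form on each simple and central summand, since changing the form only rescales the pieces compatibly. The same proof therefore holds for every reductive \(\fg\). In the \(\hbar\)-adic setting of the paper, the identity persists after inserting powers of \(\hbar\), because \(\hc\) is filtered.
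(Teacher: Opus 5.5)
Your proof is correct and is essentially the standard computation that the paper does not carry out itself but defers to \cite[Lemma~4.6]{Eti2009Lec}: you show that the radial part of \(\Delta_\fg\) is \(\Delta_\fkh+\sum_{\alpha\in R^+}\frac{2}{\alpha}\partial_{h_\alpha}=\delta^{-1}\Delta_\fkh\,\delta\) (using \(\Delta_\fkh\delta=0\) by anti-invariance and degree), and then conjugate by \(\delta\). The step you flag as the main obstacle can be made rigorous by differentiating the invariance identity \(df_Y([e_{-\alpha},Y])=0\) in the direction \(e_\alpha\) at \(Y=h\) and using \([e_\alpha,e_{-\alpha}]=(e_\alpha,e_{-\alpha})\,h_\alpha\), which gives exactly \(\alpha(h)\,\partial_{e_\alpha}\partial_{e_{-\alpha}}f(h)=\partial_{h_\alpha}f(h)\).
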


\begin{remarks}\label{rmk: comput laplace}
The Laplacian case shows that taking the radial part of an invariant differential operator is not equivalent to discarding its off-diagonal terms: derivatives in the root directions contribute nontrivially before Harish--Chandra conjugation.
\end{remarks}

A key property of the Harish--Chandra homomorphism is its compatibility with principal symbols restriction.
Consider the order filtration \(F_\bullet \mD(\fkg)\) on $\mD(\fkg)$, and let \(\sigma_\bullet: \mD(\fkg)\to\Sym^\bullet (\fkg\oplus\fkg^*)\) denote the symbol map.  If 
\(P\in F_m\mD(\fkg)^{\rmG}\), then
\[
\sigma_m(\hc(P))
=
\left.\sigma_m(P)\right|_{\fkh\oplus\fkh^*}.
\]
Indeed, the leading term of radial restriction is obtained by restricting
both the base and cotangent variables, while conjugation by the
multiplication operator \(\delta\) changes only terms of order strictly
less than \(m\).  Consequently,
\[
\gr\hc:
\bbC[T^*\fkg]^{\fkg}
\longrightarrow
\bbC[T^*\fkh]^W
\]
is the restriction map, and on completed Weyl algebras
\(\hc_\hbar\bmod\hbar=\operatorname{res}\).  After adjoint Hamiltonian
reduction, this compatibility is given by \cite[Theorem~6.10(ii)]{EG2002}.

We now specialize to the Jordan quiver.  Then
\(\rmG=\GL_n(\bbC)\), \(\fkg=\gl_n(\bbC)\),
\(\fkh\simeq\bbC^n\), and \(W=S_n\).
Composing the descended quantum trace with the completed
Harish--Chandra homomorphism gives
\[
\widetilde{\Tr^q}:
\Pi Q_{\hbar}
\longrightarrow
\mD_\hbar (\gl_n (\bbC))///\gl_n(\bbC)
\xrightarrow{\ \hc\ }
\mD_\hbar(\bbC^n)^{S_n}.
\]

It is natural to expect that the specialization at \(\hbar=0\) should recover the classical trace morphism from the noncommutative reduction to the Cartan quotient.  The next proposition makes this statement precise.

\begin{proposition}\label{prop: tilt Tr semiclassical}
{Let \(Q\) be the Jordan quiver and let \(n\) be a positive integer.  Reduction modulo \(\hbar\) of \(\widetilde{\Tr^q}\) defines a Poisson algebra morphism
\[
\tTr:
\Sym((\Pi Q)_\cyc)
\longrightarrow
\bbC[\rmT^\ast\bbC^n]^{S_n}.
\]
For any \(x,y\in(\Pi Q)_\cyc\) and lifts \(\widehat{x},\widehat{y}\in\Pi Q_{\hbar}\), one has
\[
\{\tTr(x),\tTr(y)\}
=
-\frac{1}{\hbar}
\bigl[
\widetilde{\Tr^q}(\widehat{x}),
\widetilde{\Tr^q}(\widehat{y})
\bigr]
\quad\bmod\hbar.
\]}
\end{proposition}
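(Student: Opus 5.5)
We reduce \(\widetilde{\Tr^q}=\hc\circ\Tr^q\) modulo \(\hbar\) one factor at a time and check that each step is compatible with both the product and the bracket \(-\frac1\hbar[-,-]\bmod\hbar\). Both factors are continuous \(\bbC[[\hbar]]\)-algebra homomorphisms between flat, \(\hbar\)-adically complete algebras. Hence they induce algebra homomorphisms on the special fibres, and so does their composite.

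The source fibre is \(\Pi Q_\hbar/\hbar\Pi Q_\hbar\cong\Sym((\Pi Q)_\cyc)\), by \eqref{eq: preproj special fibre} together with Theorem~\ref{thm: noncom quant red for quiver} (completion does not change the fibre). The target fibre is \(\mD_\hbar(\bbC^n)^{S_n}/\hbar\cong\bbC[\rmT^*\bbC^n]^{S_n}\). Taking \(S_n\)-invariants commutes with reduction mod \(\hbar\) because \(S_n\) is finite and we work in characteristic zero. This defines \(\tTr\) as an algebra morphism.

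To identify \(\tTr\) explicitly, we first use that \(\Tr^q\bmod\hbar\) is the classical trace \(\Tr\). This is clear from Definition~\ref{def: q trace map}: modulo \(\hbar\) the ordering of the factors is irrelevant, \(\partial_{x_{ji}}\) becomes the cotangent coordinate \(y_{ij}\), and so \(\widehat{[x^ay^b]}\) maps to \(\operatorname{tr}(X^aY^b)\) while \(E\) maps to \(n\). Next, the symbol compatibility \(\hc_\hbar\bmod\hbar=\operatorname{res}\) stated after Remark~\ref{rmk: comput laplace} says that \(\hc\bmod\hbar\) is restriction to \(\fkh\oplus\fkh^*\). Restricting \(\operatorname{tr}(X^aY^b)\) to diagonal pairs gives \(\sum_i x_i^ay_i^b\). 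Therefore \(\tTr=\operatorname{res}\circ\Tr\), and in particular \(\tTr([x^ay^b])=\sum_i x_i^ay_i^b\).

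For the bracket identity, let \(\widehat x,\widehat y\) be any lifts. Since \(\Pi Q_\hbar\bmod\hbar\) is commutative, \([\widehat x,\widehat y]\in\hbar\Pi Q_\hbar\). The images \(\widetilde{\Tr^q}(\widehat x)\) and \(\widetilde{\Tr^q}(\widehat y)\) are lifts of \(\tTr(x)\) and \(\tTr(y)\) in \(\mD_\hbar(\bbC^n)^{S_n}\), which is a formal quantization of \(\bbC[\rmT^*\bbC^n]^{S_n}\) with the standard Poisson bracket. Condition~(4) of Definition~\ref{def: quant Poi} then gives
\[
\{\tTr(x),\tTr(y)\}=-\tfrac1\hbar\bigl[\widetilde{\Tr^q}(\widehat x),\widetilde{\Tr^q}(\widehat y)\bigr]\bmod\hbar .
\]
This is exactly the displayed formula in the statement. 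Note also that the right-hand side equals \(\tTr\bigl(-\tfrac1\hbar[\widehat x,\widehat y]\bmod\hbar\bigr)\), because \(\widetilde{\Tr^q}\) is \(\bbC[[\hbar]]\)-linear and \(\hbar\)-torsion-freeness allows us to divide by \(\hbar\). By Theorem~\ref{thm: noncom quant red for quiver} and \eqref{nc  pic}, descended through Proposition~\ref{prop: noncom red}, the quantity \(-\tfrac1\hbar[\widehat x,\widehat y]\bmod\hbar\) is the Poisson bracket \(\{x,y\}\) on \(\Sym((\Pi Q)_\cyc)\). Hence \(\tTr\{x,y\}=\{\tTr x,\tTr y\}\), so \(\tTr\) is a Poisson morphism.

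\textbf{Main obstacle.} The delicate point is making sure that each input is valid in the completed, \(\hbar\)-adic setting. First, \(\hc\) must be well defined on \(\mD_\hbar(\gl_n)///\gl_n\) and reduce to restriction mod \(\hbar\); here we rely on \cite[Theorem~6.10(ii)]{EG2002} as quoted above. Second, the descended quantum trace into the reduction must reduce to the classical trace. If an independent check is wanted, one could compare the brackets directly on generators: by Theorem~\ref{thm: from nc  to }, composed with the Poisson property of restriction from \(\Com_n//\GL_n\) to \(\rmT^*\fkh//S_n\), the Poisson bracket of the traces corresponds to the leading term of the commutator.
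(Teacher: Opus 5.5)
Your proposal is correct and follows essentially the same route as the paper. You identify both special fibres, using averaging over \(S_n\) for the invariants. You use that \(\Tr^q\) reduces to the classical trace and that \(\hc\) reduces to restriction of symbols to \(\fkh\oplus\fkh^*\). You then get the bracket identity because an algebra morphism between \(\hbar\)-torsion-free algebras preserves \(-\hbar^{-1}[-,-]\bmod\hbar\).

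One harmless overstatement: you do not need, and the paper does not establish, flatness of the intermediate algebra \(\mD_\hbar(\gl_n(\bbC))///\gl_n(\bbC)\). Any \(\bbC[[\hbar]]\)-algebra map induces a map on special fibres, and the bracket argument uses only the torsion-freeness of the source and the target.
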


\begin{proof}
By Section \ref{subsec: nc quant} and Theorem \ref{thm: noncom quant red for quiver}, the source specializes as
\[
\Pi Q_{\hbar}/\hbar\Pi Q_{\hbar}
\cong
\Sym((\Pi Q)_\cyc).
\]
The target specializes as
\[
\mD_\hbar(\fkh)^{S_n}/\hbar\mD_\hbar(\fkh)^{S_n}
\cong
\bbC[\rmT^\ast\fkh]^{S_n}.
\]
Here invariants commute with reduction modulo \(\hbar\), because the
Reynolds operator \(\frac1{n!}\sum_{\sigma\in S_n}\sigma\) makes the
\(S_n\)-invariants functor exact.
Modulo \(\hbar\), Schedler's work \cite{Sch2005} and Zhao's work \cite{Zhao2023Non} show that the quantum trace map becomes the classical trace map,
which is an algebra morphism with respect to the symmetric product on
\(\Sym((\Pi Q)_\cyc)\).

Analysis below Remark \ref{rmk: comput laplace} shows that specialization of \(\hc\) is the restriction of the corresponding principal symbol from \(\rmT^\ast\gl_n\) to \(\rmT^\ast\fkh\).
Thus the specialization of \(\widetilde{\Tr^q}\) is an algebra morphism \(\tTr\).

Both source and target carry their semiclassical Poisson brackets
through \(-\hbar^{-1}\) times the commutator. Since
\(\widetilde{\Tr^q}\) is an algebra morphism, it preserves
commutators, and the stated identity follows after reduction modulo
\(\hbar\). Hence \(\widetilde{\Tr}\) is Poisson.
\end{proof}

\begin{proposition}\label{prop: tTr surj}
Let \(Q\) be the Jordan quiver and let \(n\) be a positive integer.
Then the morphisms
	\(\widetilde{\Tr^q},\ \widetilde{\Tr}\) are surjective algebra morphisms.
\end{proposition}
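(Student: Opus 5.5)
The plan is to prove the classical statement first and then lift it to the quantum one by an $\hbar$-adic graded argument.

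\emph{Classical surjectivity.} By Proposition \ref{prop: tilt Tr semiclassical} and the principal-symbol compatibility of $\hc$, the map $\tTr$ sends the generator $z_{a,b}=[x^ay^b]$ to the restriction of $\Tr(X^aY^b)$ to diagonal pairs, that is, to $\sum_{i=1}^n x_i^a y_i^b$. It also sends $z_{0,0}$ to $n$. So surjectivity of $\tTr$ is the statement that the ring of multisymmetric (diagonal) invariants $\bbC[x_1,\dots,x_n,y_1,\dots,y_n]^{S_n}$ is generated by the polarized power sums $p_{a,b}=\sum_i x_i^a y_i^b$. This is the classical theorem of Weyl on polarizations, valid in characteristic zero. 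I would give the short argument directly. The invariant ring is spanned by the symmetrizations $\mathrm{Sym}(m)=\sum_{\sigma\in S_n}\sigma(m)$ of monomials $m=\prod_{i=1}^k x_i^{a_i}y_i^{b_i}$, where $(a_i,b_i)\neq(0,0)$ after relabeling. I would then induct on the number $k$ of occupied indices. The product $p_{a_1,b_1}\cdots p_{a_k,b_k}$ equals a nonzero multiple of $\mathrm{Sym}(m)$ plus symmetrizations of monomials with fewer occupied indices, namely those in which some indices coincide. This is the same occupied-index bookkeeping already used in the proof of Proposition \ref{prop: joint inj}, so triangularity gives generation.

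\emph{Lifting to $\widetilde{\Tr^q}$.} Write $A=\Pi Q_\hbar$ and $B=\mD_\hbar(\fkh)^{S_n}$. Both are $\hbar$-adically complete and separated. $B$ is flat because it is a direct summand of $\mD_\hbar(\fkh)$ via the Reynolds operator. By Proposition \ref{prop: tilt Tr semiclassical}, the reduction of $\widetilde{\Tr^q}$ modulo $\hbar$ is $\tTr$, which is surjective by the previous step. Given $b\in B$, I would successively choose $u_0,u_1,\dots\in A$ with
\[
b-\widetilde{\Tr^q}(u_0+\hbar u_1+\cdots+\hbar^k u_k)\in\hbar^{k+1}B.
\]
At each stage I divide the error by $\hbar^{k+1}$, which is legitimate because $B$ is $\hbar$-torsion-free, and then lift its residue via $\tTr$. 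The series $u=\sum_k\hbar^ku_k$ converges in $A$ by completeness. Continuity of $\widetilde{\Tr^q}$, together with $B$ being separated, gives $\widetilde{\Tr^q}(u)=b$.

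\emph{Expected obstacle.} The main point needing care is the identification of $\widetilde{\Tr^q}\bmod\hbar$ with an explicit map on generators. One must check that the Harish--Chandra conjugation by $\delta$ and the normal-ordering corrections only contribute in positive $\hbar$-order, so they do not affect the residues $\sum_i x_i^ay_i^b$. This follows from the principal-symbol statement $\hc_\hbar\bmod\hbar=\operatorname{res}$ recorded before Proposition \ref{prop: tilt Tr semiclassical}, combined with Theorem \ref{thm: from nc  to }. One must also check that $B/\hbar B\cong\bbC[T^*\fkh]^{S_n}$, which holds by exactness of taking $S_n$-invariants.
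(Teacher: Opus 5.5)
Your proposal is correct and follows the paper's proof. Surjectivity of $\widetilde{\Tr}$ comes from the polarized power sums $\sum_i x_i^a y_i^b$ generating $\bbC[x_1,\dots,x_n,y_1,\dots,y_n]^{S_n}$, and the quantum statement follows by the same $\hbar$-adic successive-approximation lift, using completeness of $\Pi Q_\hbar$, continuity, and the torsion-freeness and separatedness of $\mD_\hbar(\fkh)^{S_n}$. The only difference is that you prove the generation statement directly, by the occupied-index triangularity argument as in Proposition~\ref{prop: joint inj}, whereas the paper cites Domokos; this works because every $[x^ay^b]$ lies in the source, so no degree bound is needed.
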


\begin{proof}
We first prove the surjectivity of \(\widetilde{\Tr}\).  
For the Jordan
quiver, we have
		\(\Pi Q\cong \bbC[x,y] \cong (\Pi Q)_\cyc\).
	Recall that we identify
	\(\fkh\simeq \bbC^n,\ 
		\rmT^\ast\fkh\simeq \fkh\oplus\fkh^\ast\).
	Let
	\(x_1,\dots,x_n\)
	be the standard coordinate functions on \(\fkh\), and let
	\(y_1,\dots,y_n\)
	be the corresponding coordinate functions on \(\fkh^\ast\).
	The Weyl group
	\(S_n\) acts diagonally on \(\rmT^\ast\fkh\simeq \fkh\oplus\fkh^\ast\).
	
	By the definition of the classical trace map, one has
	\[
	\widetilde{\Tr}([x^ay^b])
	=
	p_{a,b}
	=
	\sum_{i=1}^n x_i^ay_i^b
	\in
	\bbC[\rmT^\ast\fkh]^{S_n},\qquad a,b \in \bbZ_{\geq 0}.
	\]
	The functions \(p_{a,b}\) are the polarized power sums for the diagonal
	\(S_n\)-action on \(\rmT^\ast\fkh\).
	A standard result in invariant theory shows that
	the invariant ring
	\[
	\bbC[x_1,\dots,x_n,y_1,\dots,y_n]^{S_n}
	\]
	is generated as a \(\bbC\)-algebra by the polarized power sums
	\[
	p_{a,b}
	=
	\sum_{i=1}^n x_i^ay_i^b,
	\qquad a, b \in \bbZ_{\geq 0}\text{ and }
	1\leq a+b\leq n.
	\]
	See \cite[Theorem 2.5]{Dom2009Vec} and the references therein.
	Since each generator \(p_{a,b}\) lies in the image of
	\(\widetilde{\Tr}\), the morphism
	\(\widetilde{\Tr}:
	\Sym((\Pi Q)_\cyc)
	\longrightarrow
	\bbC[\rmT^\ast\fkh]^{S_n}\)
	is surjective.
	
	It remains to lift surjectivity to the quantum morphism
	\(\widetilde{\Tr^q}:
	\Pi Q_{\hbar}
	\longrightarrow
		\mD_\hbar(\fkh)^{S_n}\).
	Both algebras are complete for the \(\hbar\)-adic
	topology: this is part of the quantization statement.  The map \(\widetilde{\Tr^q}\) is continuous and
	its reduction modulo \(\hbar\) is the surjective map
	\(\widetilde{\Tr}\) proved above.

	Let \(\beta \in\mD_\hbar(\fkh)^{S_n}\).
	Choose \(\alpha_0\in\Pi Q_{\hbar}\)
	whose image under \(\widetilde{\Tr^q}\) agrees with \(\beta\) modulo \(\hbar\).  Inductively, suppose
	\(\alpha_0,\ldots,\alpha_{k-1}\) have been chosen so that
	\[
	\beta - \widetilde{\Tr^q}\!\left(\sum_{j=0}^{k-1}\hbar^j\alpha_j \right)
	\in \hbar^k\mD_\hbar(\fkh)^{S_n}.
	\]
	Write this difference as \(\hbar^k \beta_k\).  Surjectivity modulo \(\hbar\)
	provides \(\alpha_k \in \Pi Q_{\hbar}\) with
	\(\widetilde{\Tr^q}(\alpha_k) \equiv \beta_k \pmod \hbar \).
	Completeness of
	\(\Pi Q_{\hbar}\) gives
	\(\alpha=\sum_{k\geq0}\hbar^k\alpha_k \in \Pi Q_{\hbar}\), and continuity gives
	\(\widetilde{\Tr^q}(\alpha)=\beta\).  Thus \(\widetilde{\Tr^q}\) is surjective.
\end{proof}

The preceding results can be summarized as follows.

\begin{theorem} \label{thm:intro-radial-qtrace}
Let \(Q\) be the Jordan quiver and let \(n\geq1\).
\begin{enumerate}
\item[$(1)$] There is a surjective \(\bbC[[\hbar]]\)-algebra homomorphism
\[
\widetilde{\Tr^q}:
\Pi Q_{\hbar}
\longrightarrow
\mD_\hbar(\fkh)^{S_n}.
\]

\item[$(2)$] The semiclassical limit of \(\widetilde{\Tr^q}\),
\[
\widetilde{\Tr}:
\Sym((\Pi Q)_\cyc)
\longrightarrow
\bbC[T^*\fkh]^{S_n}
\]
is a surjective {Poisson} algebra morphism.  More precisely,
\[
\widetilde{\Tr}([x^ay^b])
=
\sum_{i=1}^n x_i^ay_i^b.
\]
\end{enumerate}
\end{theorem}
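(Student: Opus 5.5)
This theorem collects results already proved, so the plan is to assemble them.

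\textbf{Part (1).} The homomorphism is the composite defining \(\widetilde{\Tr^q}\):
\[
\Pi Q_\hbar\to\mD_\hbar(\gl_n)///\gl_n\xrightarrow{\hc}\mD_\hbar(\fkh)^{S_n}.
\]
I would check each ingredient in turn.
\begin{enumerate}
\item The first arrow exists because of the descent inclusion \eqref{eq: qTr descends}. It says \(\Tr^q(\mathcal I_\hbar^\pol)\) lands in \(\mathcal J^\pol_{\hbar,0}\cap\mD^\pol_\hbar(\gl_n)^{\GL_n}\). Since \(\Tr^q\) is multiplicative and continuous, it descends to the quotient \(\Pi Q^\pol_\hbar\). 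It then passes to the \(\hbar\)-adic completion \(\Pi Q_\hbar\), because \(\Tr^q\) preserves powers of \(\hbar\).
\item The image consists of invariant elements, so modulo the moment-map ideal it lies in the Hamiltonian reduction.
\item The second arrow is the completed Harish--Chandra map. It is a \(\bbC[[\hbar]]\)-algebra homomorphism by Theorem \ref{thm: hc regular} together with the filtered, \(\hbar\)-adic extension discussed after it.
\end{enumerate}
Surjectivity of the composite is then Proposition \ref{prop: tTr surj}. Its argument runs as follows: given surjectivity modulo \(\hbar\), successive approximation using completeness of \(\Pi Q_\hbar\) (Theorem \ref{thm: noncom quant red for quiver}) and continuity yields a preimage.

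\textbf{Part (2).} Proposition \ref{prop: tilt Tr semiclassical} identifies the reduction modulo \(\hbar\) as a Poisson algebra morphism \(\Sym((\Pi Q)_\cyc)\to\bbC[T^*\fkh]^{S_n}\). This uses three inputs:
\begin{itemize}
\item the special fibre identification \eqref{eq: preproj special fibre};
\item exactness of \(S_n\)-invariants;
\item the fact that \(\hc_\hbar \bmod \hbar\) is restriction of principal symbols.
\end{itemize}
To obtain the explicit formula, note that \(\widehat{[x^ay^b]}\bmod\hbar=[x^ay^b]\). Its image modulo \(\hbar\) is the classical trace \(\tr(X^aY^b)\) on \(T^*\gl_n\), restricted to diagonal pairs, which gives \(\sum_i x_i^ay_i^b\). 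Surjectivity of \(\widetilde{\Tr}\) is the first half of Proposition \ref{prop: tTr surj}: polarized power sums generate the multisymmetric invariants.

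\textbf{Main obstacle.} The only delicate point is the compatibility of \(\hc_\hbar \bmod\hbar\) with restriction after Hamiltonian reduction. Specifically, one must know that reducing the quotient \(\mD_\hbar(\gl_n)///\gl_n\) modulo \(\hbar\) maps to \(\bbC[T^*\gl_n]^{\gl_n}\) modulo the moment ideal, followed by restriction, without a flatness hypothesis. I would handle this directly on the images. The elements \(\Tr^q(\widehat{[x^ay^b]})\) already have well-defined invariant symbols \(\tr(X^aY^b)\), and restricting a symbol to \(\fkh\oplus\fkh^*\) is insensitive to the moment-map ideal, because \([X,Y]\) vanishes there. This sidesteps the non-flatness.
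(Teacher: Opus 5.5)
Your proposal is correct and follows the paper's own argument. The theorem is stated there as a summary of three pieces: the construction of $\widetilde{\Tr^q}$ as $\hc\circ\Tr^q$, descended via \eqref{eq: qTr descends}; Proposition \ref{prop: tilt Tr semiclassical}; and Proposition \ref{prop: tTr surj}, which uses generation of $\bbC[T^*\fkh]^{S_n}$ by polarized power sums followed by $\hbar$-adic lifting. As for your ``main obstacle'', the paper needs only that $\hc_\hbar \bmod \hbar$ is restriction of principal symbols on $\mD_\hbar(\gl_n)^{\gl_n}$ itself, applied to the explicit lifts $\Tr^q(X^aD^b)$, so no flatness input enters, exactly as you suggest.
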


Although the construction of \(\widetilde{\Tr^q}\) is formal once the descent statement is established, computing the image of a mixed invariant differential operator is substantially more difficult.  To indicate the source of this difficulty, let
\[
I
:=
\left\{
D\in \mD(\fkg)^{\rmG}
\;\middle|\;
D(f)=0
\text{ for every }
f\in \bbC[\fkg]^{\rmG}
\right\}.
\]
The work of Harish--Chandra \cite{HC1957Dif, HC1964Inv}, together with that of Wallach \cite{Wal1993Inv} and Levasseur--Stafford \cite{LevSta1995Inv,LevSta1996Ker}, gives the exact sequence
\[
0
\longrightarrow I
\longrightarrow \mD(\fkg)^{\rmG}
\xrightarrow{\hc}
\mD(\fkh)^{W}
\longrightarrow0.
\]
Moreover,
\[
\hc(f)=\left.f\right|_{\fkh},
\qquad
f\in\bbC[\fkg]^{\rmG},
\]
and
\[
\hc\bigl(P(\partial)\bigr)
=
\left.P\right|_{\fkh}(\partial),
\qquad
P\in\operatorname{Sym}(\fkg)^{\rmG}.
\]

Levasseur--Stafford \cite{LevSta1995Inv} proved that
\(\mD(\fkh)^{W}\) is generated by the two subalgebras
\[
\bbC[\fkh]^{W}
\qquad\text{and}\qquad
\operatorname{Sym}(\fkh)^{W},
\]
acting respectively by multiplication and by constant-coefficient differential operators.  Consequently, once an invariant operator is represented modulo \(I\) as a noncommutative word in the corresponding source subalgebras, its Harish--Chandra image is obtained by substitution.  The substantive difficulty is that naturally occurring mixed differential operators are rarely presented in this form; finding such a representative modulo \(I\) is itself a nontrivial invariant-theoretic problem.

\subsection{Computation of the radial-part homomorphism}
\label{subsec: state radial part in powersum}

To describe \(\widetilde{\Tr^q}\) explicitly, we compute the radial parts of the
invariant differential operators arising from quantum traces.
These quantum trace operators are the basic building blocks for the image of the quantum trace map.
By Schedler's construction in Section \ref{subsec: nc quant} and the similar lifting argument in the proof of Proposition \ref{prop: tTr surj},
every element of
\(\mD_\hbar (\gl_n (\bbC))^{\gl_n (\bbC)}\) is an
\(\hbar\)-adic limit of finite sums of products of such operators.

For \(s,r \geq 0\), let \(w_{s,r}\in\qneck\) be the
single heighted cyclic path
\[
w_{s,r}
=
(x,1)\cdots(x,s)(y,s+1)\cdots(y,s+r) \in \qneck.
\]
Schedler's formula \cite[Eq.~(3.12)]{Sch2005}, in the convention of
Definition~\ref{def: q trace map}, gives
\begin{equation}\label{for: trace w_sr}
	\Tr^q(w_{s,r})
	=
	\sum_{q_0,q_1,\ldots,q_r=1}^n
	(X^s)_{q_0,q_1}
	\prod_{j=1}^r\partial_{x_{q_{j+1},q_j}}
	\in\mD_\hbar(\gl_n (\bbC))^{\gl_n (\bbC)}.
\end{equation}
Here \(q_{r+1}:=q_0\).
To give a compact formula, we recall that
\(
	D=(\partial_{x_{ji}})
\)
where \(\partial_{x_{ij}}\) denotes the
\(\hbar\)-Weyl algebra generator; on polynomial functions it acts as
\[
\partial_{x_{ij}}
=
\hbar\frac{\partial}{\partial x_{ij}}.
\]

With the matrix multiplication convention, the preceding \(\Tr^q(w_{s,r})\) can be written as the trace of the matrix \(X^s D^r\).
By abuse of notation we denote it by
\(\Tr^q(X^sD^r)\).  
Write \(\overline w_{s,r}\) for the image of \(w_{s,r}\) in
\(\Pi Q_{\hbar}\).

We now introduce the notation needed for the radial-part formula for
\(\Tr^q(X^sD^r)\).
Let
\[
q=(q_0,q_1,\ldots,q_r),\qquad q_j\in\{1,\ldots,n\},
\]
and set \(q_{r+1}=q_0\). Put
\[
M_q(u)
=
X+\sum_{j=1}^{r}u_jE_{q_{j+1},q_j}
=
X+\sum_{j=2}^{r+1}u_{j-1}E_{q_j,q_{j-1}},
\]
where \(u_1,\ldots,u_r\) are commuting indeterminates and
\(E_{a,b}\) denotes the matrix unit with a \(1\) in position
\((a,b)\).

No distinctness condition is imposed on the indices \(q_j\), and throughout this section \(\sum_q\) means \(\sum_{q_0,\dots,q_r=1}^n\).
For \(k\geq0\), define \(\Gamma_{q,I}^{(k)}(X)\) as the coefficient of \(u^I\)
in \(M_q(u)^k\):
\[
	M_q(u)^k
	=
	\sum_{I\in\bbZ_{\geq0}^r}
	\Gamma_{q,I}^{(k)}(X)u^I.
\]
Here, \(u^I=u_1^{\rho_1}\cdots u_r^{\rho_r}\) for
\(I=(\rho_1,\dots,\rho_r)\in\bbZ_{\geq 0}^r\).
If \(B\subseteq [r]=\{1,\dots,r\}\), let \(I_B\in\{0,1\}^r\) be its
indicator multi-index.
For example, if \(B=\{3,5,7\}\subset [7]\), then \(I_B=(0,0,1,0,1,0,1)\). 
A straightforward calculation shows that
\[
\left.
\left(\prod_{j\in B}\frac{\partial}{\partial u_j}\right)
M_q(u)^k
\right|_{u=0}
=
\Gamma_{q,I_B}^{(k)}(X).
\]

For every \(k\in\bbZ_{\geq0}\), set
\[
p_k:=\sum_{i=1}^n x_i^k;
\qquad\text{in particular, }p_0=n.
\]
The Newton identities give
\(\bbC[\fkh]^{S_n}=\bbC[p_1,\ldots,p_n]\).  Since
\(\fkh_{\reg}\to\fkh_{\reg}/S_n\) is finite \'{e}tale, each coordinate
derivation \(\hbar\,\partial/\partial p_a\) has a unique
\(S_n\)-invariant lift to \(\fkh_{\reg}\).  We denote it by
\(\partial_{p_a}\).  Then, for \(1\leq a,b\leq n\),
\[
[\partial_{p_a},p_b]=\hbar\delta_{a,b},
\qquad
[\partial_{p_a},\partial_{p_b}]=0.
\]
Recall that in this work, the symbols
\(\frac{\partial}{\partial u_j}\) denote ordinary derivations,
whereas \(\partial_{p_a}\) denotes the element of the \(\hbar\)-Weyl algebra.
For a set partition \(\pi\) of \([r]\) (see Appendix \ref{app: fdb} for the definition) and a map 
\(\alpha:\pi\to\{1,\dots,n\}\), set
\[
	\Tr(\Gamma_{q,\pi}^{\alpha})
	:=
	\prod_{B\in\pi}
	\Tr\!\left(\Gamma_{q,I_B}^{(\alpha(B))}\right),
	\quad
		\partial_p ^\alpha
	:=
	\prod_{B\in\pi}
	\partial_{p_{\alpha(B)}},
	\quad
	\sum_{(\pi,\alpha)}
	=
	\sum_{\pi\in\Pi([r])}
		\sum_{\alpha:\pi\to\{1,\dots,n\}}.
	\]
The product defining \(\partial_p^\alpha\) is independent of the
ordering of the blocks because the lifted derivations commute.
With this notation, we have

\begin{theorem}[Radial-part homomorphism in power sums]
\label{thm: radial part}
Let \(\fkg=\gl_n(\bbC)\) and let \(\fkh\) be the 
Cartan subalgebra of diagonal matrices.
Then for any \(s\in\bbZ_{\geq 0}\) and \(r \in \bbZ_{\geq 1}\),
\begin{equation*}
\hc'(\Tr^q(X^sD^r))
=
		\sum_{(\pi,\alpha)}
		\hbar^{r - |\pi|}
		\left(
		\sum_{q}
		{\delta_{q_0,q_1}} x_{q_0}^s
		\Tr(\Gamma_{q,\pi}^{\alpha}(x_1,\dots,x_n))
		\right)
		\pa^\alpha_p.
\end{equation*}
Furthermore, each coefficient
\[
\sum_q {\delta_{q_0, q_1}} x_{q_0}^s
\Tr(\Gamma_{q,\pi}^{\alpha}(x_1,\dots,x_n))
\]
is a polynomial in the power sums \(p_k(x_1,\dots,x_n)\).
\end{theorem}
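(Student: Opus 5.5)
The radial part $\hc'(P)$ is determined by how $P$ acts on invariant functions: it is the unique element of $\mD_\hbar(\fkh_{\reg})^{S_n}$ with $\hc'(P)(f|_\fkh)=P(f)|_\fkh$ for $f\in\bbC[\fkg]^{\fkg}$. So I would compute $\Tr^q(X^sD^r)$ applied to an arbitrary invariant function and read off its coefficients in the basis $\partial_p^\alpha$. By Chevalley restriction, every $f\in\bbC[\fkg]^\fkg$ has the form $F(\Tr X,\Tr X^2,\dots,\Tr X^n)$ for a polynomial $F$ in $n$ variables. It is convenient to write $P_k(X):=\Tr(X^k)$, so that $P_k|_\fkh=p_k$.

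The first step treats the derivative part. In \eqref{for: trace w_sr} the factors $\partial_{x_{q_{j+1},q_j}}$ for $j=1,\dots,r$ all commute, so their product is a constant-coefficient operator. Hence
\[
\prod_{j=1}^r \partial_{x_{q_{j+1},q_j}}\, f(X)=\hbar^r\left.\frac{\partial^r}{\partial u_1\cdots\partial u_r} f(M_q(u))\right|_{u=0}.
\]
The function $u\mapsto f(M_q(u))=F(P_1(M_q(u)),\dots,P_n(M_q(u)))$ is a composite. I would expand it with the multivariate Fa\`a di Bruno formula from Appendix \ref{app: fdb} for mixed first-order derivatives in distinct variables. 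This yields a sum over set partitions $\pi\in\Pi([r])$ and maps $\alpha:\pi\to\{1,\dots,n\}$. The outer derivative contributes $\prod_{B\in\pi}(\partial F/\partial P_{\alpha(B)})$, and each inner factor is
\[
\prod_{j\in B}\frac{\partial}{\partial u_j}\,\Tr(M_q(u)^{\alpha(B)})\Big|_{u=0}=\Tr\Gamma^{(\alpha(B))}_{q,I_B}(X).
\]
The multiplication by $(X^s)_{q_0,q_1}$ occurs to the left of all derivatives, so no commutator terms arise.

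The second step restricts to $\fkh$. There $(X^s)_{q_0,q_1}=\delta_{q_0,q_1}x_{q_0}^s$, and $\Gamma$ is evaluated at the diagonal matrix $\operatorname{diag}(x_1,\dots,x_n)$. On $\fkh_{\reg}$ the operator $\prod_{B\in\pi}\partial_{p_{\alpha(B)}}$ applied to $F(p_1,\dots,p_n)$ equals $\hbar^{|\pi|}\prod_B \partial F/\partial p_{\alpha(B)}$. Comparing powers of $\hbar$ gives the factor $\hbar^{r-|\pi|}$, provided the coefficient is placed to the left of $\partial^\alpha_p$. This produces the displayed identity for the operators acting on all of $\bbC[\fkh]^{S_n}=\bbC[p_1,\dots,p_n]$. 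Since an operator in $\mD(\fkh_{\reg})^{S_n}$ is determined by its action on invariant functions, because $\fkh_{\reg}\to\fkh_{\reg}/S_n$ is étale, the two operators agree.

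The third step, which I expect to be the main obstacle, is showing that each coefficient $c=\sum_q\delta_{q_0,q_1}x_{q_0}^s\prod_B\Tr\Gamma^{(\alpha(B))}_{q,I_B}$ is a polynomial in the power sums. Symmetry is easy: permuting the indices $q$ by $\sigma\in S_n$ conjugates every $M_q$ by the corresponding permutation matrix, so $c$ is $S_n$-invariant. It is also a polynomial, hence lies in $\bbC[p_1,\dots,p_n]$ by Newton's identities. For an explicit description I would expand each $\Gamma^{(k)}_{q,I_B}(x)$ as a sum over placements of the matrix units $E_{q_{j+1},q_j}$, $j\in B$, among $k$ factors of the diagonal matrix. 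Each trace then becomes a sum over closed walks in a graph on the index set, with coefficients monomials $x_{q}^{m}$. Collapsing equal indices via sums over set partitions of $\{q_0,\dots,q_r\}$ would then express $c$ through products of the $p_k$. The invariance argument already suffices for the statement as claimed.
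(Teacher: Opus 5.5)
Your proposal is correct, and its first part is the paper's argument. Both you and the paper use Chevalley restriction to write $f=F(\Tr X,\dots,\Tr X^n)$, take directional derivatives along $M_q(u)$, expand with the set-partition form of Fa\`a di Bruno, restrict to $\fkh$, and match powers to get $\hbar^{r-|\pi|}\partial_p^\alpha$. Your ``$\prod_{B}\partial F/\partial P_{\alpha(B)}$'' must be read as the mixed partial $\bigl(\prod_{B\in\pi}\partial/\partial v_{\alpha(B)}\bigr)F$, not as a product of first partials.

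The genuine difference is the power-sum claim. You argue abstractly: simultaneously permuting the $x_i$ and reindexing $q$ (conjugation by a permutation matrix) fixes each coefficient for fixed $(\pi,\alpha)$. The coefficient is therefore a symmetric polynomial and lies in $\bbC[p_1,\dots,p_n]$ by Newton's identities. This is valid, shorter, and sufficient for the theorem as stated.

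The paper instead proves the claim constructively. Corollary~\ref{coro: trace gamma} writes each diagonal trace as Kronecker deltas times complete homogeneous polynomials. Proposition~\ref{prop: pow sum coeff} then reads the deltas as the edges of the graph $G_{\pi,\bfi}$ on $\{0,\dots,r\}$. Because the sum over $q$ is unrestricted, it factors over connected components, each giving one $p_{L_C}$. This produces the closed form $C^{s,r}_{\pi,\alpha}(p)=\sum_{\bfi\in\Ord(\pi)}\sum_{\bfl\in\Lambda(\pi,\alpha)}\prod_{C\in\mC(G_{\pi,\bfi})}p_{L_C}$. It has nonnegative integer coefficients, and $p_0=n$ and $p_k$ for $k>n$ appear directly rather than through Newton's identities.

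That explicit form is what the paper needs afterwards: the Harish--Chandra formula, the first-order and Laplacian computations, and the Cherednik-side formula. Your symmetry argument gives existence but not this formula. The same factorization also shows that the inclusion--exclusion over set partitions of $\{q_0,\dots,q_r\}$ you sketch for an explicit version is unnecessary. Since no distinctness is imposed on the $q_j$, the constrained sum factorizes immediately.
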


The degenerate case \(r=0\) is handled by Theorem \ref{thm: hc regular}.
 The expanded coefficient formula appears in \eqref{for: radial part coeff}.

\subsection{Proof of Theorem \ref{thm: radial part}}

We first make the coefficients 
\(\Tr(\Gamma_{q,I}^{(k)}(X))\) explicit.
Let \(n,r\geq 1\), and fix indices
\[
q=(q_0,q_1,\dots,q_r),
\qquad
q_j\in\{1,\dots,n\}.
\]
No distinctness condition is imposed on the 
indices \(q_j\).

For \(I=(\rho_1,\dots,\rho_r)\neq 0\) in \(\mathbb Z_{\geq 0}^{r}\), set
\(m=|I|:=\sum_{j=1}^r\rho_j\), and define
\[
\Ord(I)
:=
\Biggl\{
	(i_1,\dots,i_m)\in\{1,\dots,r\}^m
	\ \bigg\vert\
	\#\{a\in\{1,\dots,m\}\mid i_a=j\}=\rho_j
	\text{ for every }j
\Biggr\}.
\]
Equivalently, \(\Ord(I)\) is the set of all orderings of the multiset
containing \(\rho_j\) copies of the symbol \(j\).
Expanding \(M_q(u)^k\) gives the following formula.

\begin{lemma}
Let \(k\in\bbZ_{\geq0}\) and
\(I=(\rho_1,\ldots,\rho_r)\in\mathbb Z_{\geq 0}^{r}\), and set
\(m=|I|\).
\begin{enumerate}
\item If \(m>k\), then
\[
\Gamma_{q,I}^{(k)}(X)=0.
\]
\item If \(I=(0,\ldots,0)\), then
\[
\Gamma_{q,0}^{(k)}(X)=X^k.
\]
\item If \(1\leq m\leq k\), then
\[
\Gamma_{q,I}^{(k)}(X)
=
\sum_{i=(i_1,\ldots,i_m)\in \Ord (I)}
\ \sum_{\substack{\ell_0,\ldots,\ell_m\geq 0\\
\ell_0+\cdots+\ell_m=k-m}}
X^{\ell_0}
E_{q_{i_1+1},q_{i_1}}
X^{\ell_1}
\cdots
E_{q_{i_m+1},q_{i_m}}
X^{\ell_m},
\]
where \(q_{r+1}=q_0\).
\end{enumerate}
\end{lemma}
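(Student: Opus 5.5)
The plan is to expand the matrix power \(M_q(u)^k\) directly as a noncommutative product and then read off the coefficient of \(u^I\). Write \(N:=\sum_{j=1}^r u_jE_{q_{j+1},q_j}\), so that \(M_q(u)=X+N\). The \(u_j\) are commuting scalars, so they commute with every matrix, although \(X\) and the matrix units do not commute with each other. By distributivity, \(M_q(u)^k\) equals the sum over all words of length \(k\) in the two letters \(X\) and \(N\), each taken in its given order. Then I would expand every occurrence of \(N\) as \(\sum_j u_jE_{q_{j+1},q_j}\).

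This turns \(M_q(u)^k\) into a sum indexed by three pieces of data: the number \(m\) of positions occupied by \(N\), with \(0\le m\le k\); the choices \(i_1,\dots,i_m\in[r]\) of summands for those occurrences, read left to right; and the lengths \(\ell_0,\dots,\ell_m\ge0\) of the runs of \(X\) between them, subject to \(\ell_0+\cdots+\ell_m=k-m\). The term attached to these data is
\[
u_{i_1}\cdots u_{i_m}\,X^{\ell_0}E_{q_{i_1+1},q_{i_1}}X^{\ell_1}\cdots E_{q_{i_m+1},q_{i_m}}X^{\ell_m}.
\]
This correspondence is a bijection. A word in \(X,N\) with a chosen summand at each \(N\) is determined by the run lengths together with the sequence \((i_1,\dots,i_m)\), and conversely every such tuple of data determines exactly one word with chosen summands.

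Next I would collect terms by monomial. The term above has monomial \(u_{i_1}\cdots u_{i_m}=u^I\) exactly when the sequence \((i_1,\dots,i_m)\) contains each symbol \(j\) precisely \(\rho_j\) times, that is, when \((i_1,\dots,i_m)\in\Ord(I)\) and \(m=|I|\). Reading off the coefficient of \(u^I\) gives the three cases of the lemma.

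\emph{Case (3), \(1\le m\le k\).} The coefficient is the stated double sum.

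\emph{Case (2), \(I=0\).} Only the word with \(m=0\) contributes, and that word is \(X^k\).

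\emph{Case (1), \(|I|>k\).} No word of length \(k\) contains more than \(k\) occurrences of \(N\), so the coefficient is \(0\).

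The only point that needs care is the bookkeeping. Ordered sequences in \(\Ord(I)\) must be counted individually, with no multinomial symmetrization, because the matrix units do not commute. This is precisely why the formula sums over \(\Ord(I)\) rather than weighting by multiplicities. The convention \(q_{r+1}=q_0\) is already built into the definition of \(M_q(u)\), so it needs no separate treatment.
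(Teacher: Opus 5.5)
Your proposal is correct and follows essentially the same route as the paper. Both expand \(M_q(u)^k\) as a noncommutative product of \(k\) factors, encode each term by the left-to-right sequence of perturbation indices (an element of \(\Ord(I)\)) together with the run lengths \(\ell_0,\dots,\ell_m\) of the intervening copies of \(X\), read off the coefficient of \(u^I\), and dispose of the cases \(m>k\) and \(m=0\) exactly as you do. Your explicit statement that this correspondence is a bijection, and your remark that sequences in \(\Ord(I)\) are counted individually without symmetrization, make precise what the paper leaves implicit.
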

\begin{proof}
Expand
\[
M_q(u)^k
=
\left(
	X+\sum_{j=1}^{r}u_jE_{q_{j+1},q_j}
\right)^k
\]
as a noncommutative product of $k$ factors.

To obtain the monomial
\[
u^I=u_1^{\rho_1}\cdots u_r^{\rho_r},
\]
one must choose exactly
\[
m=\rho_1+\cdots+\rho_r
\]
perturbation factors. Reading these perturbation factors from left to
right gives a word
\[
(i_1,\dots,i_m)\in\Ord(I).
\]

The remaining $k-m$ factors are copies of $X$. They are distributed
among the $m+1$ gaps before, between, and after the perturbation
factors. Hence they determine nonnegative integers
\[
\ell_0,\dots,\ell_m
\]
satisfying
\[
\ell_0+\cdots+\ell_m=k-m.
\]

The corresponding matrix word is
\[
	X^{\ell_0}
	E_{q_{i_1+1},q_{i_1}}
	X^{\ell_1}
	\cdots
	E_{q_{i_m+1},q_{i_m}}
	X^{\ell_m}.
\]
Summing over all words in $\Ord(I)$ and all weak compositions
$\ell_0+\cdots+\ell_m=k-m$ proves the formula.

If $m>k$, one cannot choose $m$ perturbation factors from a product of
length $k$, so the coefficient is zero. If $m=0$, no perturbation factor
is chosen, and the coefficient is $X^k$.
\end{proof}

When \(X\) is diagonal, the preceding coefficient formula can be written in closed form.  We use the following notation.
For variables \(y_0,\dots,y_m\) and \(d\in\bbZ_{\geq 0}\), define the {\it complete
homogeneous symmetric polynomial}
\[
h_d(y_0,\dots,y_m)
=
\sum_{\substack{\ell_0,\dots,\ell_m\geq 0\\
\ell_0+\cdots+\ell_m=d}}
y_0^{\ell_0}\cdots y_m^{\ell_m}.
\]
Also, set
\[
	h_0(y_0,\dots,y_m)=1,
	\qquad
	h_d(y_0,\dots,y_m)=0 \quad \text{for } d<0.
\]
The next formula follows by specializing the preceding lemma to a diagonal matrix.

\begin{corollary}
Let
\[
X=\operatorname{diag}(x_1,\dots,x_n),
\qquad
I=(\rho_1,\dots,\rho_r)\in\bbZ_{\geq 0}^r,
\qquad
m=|I|,
\qquad k\in\bbZ_{\geq0}.
\]
\begin{enumerate}
\item[$(1)$]
If \(I=(0,\dots,0)\), then
\[
\Gamma_{q,0}^{(k)}(X)=X^k.
\]

\item[$(2)$]
If \(1\leq m\), then
\[
\begin{aligned}
\Gamma_{q,I}^{(k)}(X)
&=
\sum_{i=(i_1,\ldots,i_m)\in\Ord (I)}
\left(
\prod_{a=1}^{m-1}
\delta_{q_{i_a},q_{i_{a+1}+1}}
\right)
\\
&\qquad\times
h_{k-m}\left(
x_{q_{i_1+1}},
x_{q_{i_1}},
x_{q_{i_2}},
\ldots,
x_{q_{i_m}}
\right)
E_{q_{i_1+1},q_{i_m}}.
\end{aligned}
\]
\end{enumerate}
\end{corollary}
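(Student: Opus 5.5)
The plan is to specialize the preceding lemma to \(X=\operatorname{diag}(x_1,\dots,x_n)\) and evaluate each matrix word directly. Part \((1)\) is immediate from part \((2)\) of the lemma. For part \((2)\), note first that if \(m>k\) the lemma gives zero, and the claimed formula also vanishes because \(h_{k-m}=0\) for negative degree. So it suffices to treat \(1\leq m\leq k\) and the third case of the lemma.

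Fix a word \(i=(i_1,\dots,i_m)\in\Ord(I)\) and a weak composition \(\ell_0+\cdots+\ell_m=k-m\). Since \(X\) is diagonal, we have \(X^{\ell}E_{a,b}=x_a^{\ell}E_{a,b}\) and \(E_{a,b}X^{\ell}=x_b^{\ell}E_{a,b}\). We also have \(E_{a,b}E_{c,d}=\delta_{b,c}E_{a,d}\). Write \(E^{(a)}:=E_{q_{i_a+1},q_{i_a}}\). The product \(E^{(a)}X^{\ell_a}E^{(a+1)}\) then equals \(x_{q_{i_a}}^{\ell_a}\,\delta_{q_{i_a},q_{i_{a+1}+1}}\,E_{q_{i_a+1},q_{i_{a+1}}}\). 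Iterating over \(a=1,\dots,m-1\), the whole word
\[
X^{\ell_0}E^{(1)}X^{\ell_1}\cdots E^{(m)}X^{\ell_m}
\]
equals
\[
\Bigl(\prod_{a=1}^{m-1}\delta_{q_{i_a},q_{i_{a+1}+1}}\Bigr)\,
x_{q_{i_1+1}}^{\ell_0}\,x_{q_{i_1}}^{\ell_1}\cdots x_{q_{i_m}}^{\ell_m}\,
E_{q_{i_1+1},q_{i_m}} .
\]
The point to get right is that the exponent \(\ell_a\) for \(1\le a\le m-1\) attaches to \(x_{q_{i_a}}\). This is the column index of \(E^{(a)}\), and when the delta is nonzero it coincides with the row index \(q_{i_{a+1}+1}\) of \(E^{(a+1)}\), so the choice is consistent. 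Likewise \(\ell_0\) attaches to the row index \(x_{q_{i_1+1}}\) of \(E^{(1)}\), and \(\ell_m\) attaches to the column index \(x_{q_{i_m}}\) of \(E^{(m)}\).

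Next, sum over all weak compositions with \(\ell_0+\cdots+\ell_m=k-m\). By the definition of the complete homogeneous symmetric polynomial, this sum gives exactly
\[
h_{k-m}\bigl(x_{q_{i_1+1}},x_{q_{i_1}},\dots,x_{q_{i_m}}\bigr).
\]
Summing over \(i\in\Ord(I)\) then yields the stated formula.

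The only real obstacle is the index bookkeeping in the telescoping product: the successive matrix units must chain correctly, and the surviving outer indices must be \(q_{i_1+1}\) and \(q_{i_m}\). I would check this by a short induction on \(m\), appending one factor \(X^{\ell}E^{(a)}\) at a time and keeping the convention \(q_{r+1}=q_0\) throughout.
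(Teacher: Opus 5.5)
Your proposal is correct and takes the same route as the paper, which simply specializes the preceding lemma to \(X=\operatorname{diag}(x_1,\dots,x_n)\). You make the index bookkeeping explicit: each word collapses to \(\prod_{a=1}^{m-1}\delta_{q_{i_a},q_{i_{a+1}+1}}\, x_{q_{i_1+1}}^{\ell_0}x_{q_{i_1}}^{\ell_1}\cdots x_{q_{i_m}}^{\ell_m}E_{q_{i_1+1},q_{i_m}}\), and summing over weak compositions gives \(h_{k-m}\). Your use of the convention \(h_d=0\) for \(d<0\) correctly absorbs the case \(m>k\).
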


For \(m=1\), the empty product
\(\prod_{a=1}^{0}(\cdots)\)
is understood to be \(1\).  Setting \(i_{m+1}=i_1\), we obtain the following trace formula.

\begin{corollary}\label{coro: trace gamma}
Let
\[
X=\operatorname{diag}(x_1,\ldots,x_n),
\qquad
I=(\rho_1,\ldots,\rho_r)\in\bbZ_{\geq0}^r,
\qquad
m=|I|,
\qquad k\in\bbZ_{\geq0}.
\]
\begin{enumerate}
\item[$(1)$]
If \(I=(0,\ldots,0)\), then
\[
\Tr\left(\Gamma_{q,0}^{(k)}(X)\right)
=
\Tr(X^k)
=
\sum_{a=1}^n x_a^k.
\]

\item[$(2)$]
If \(1\leq m\), then, 
\[
\begin{aligned}
\Tr\left(\Gamma_{q,I}^{(k)}(X)\right)
&=
\sum_{\bfi=(i_1,\ldots,i_m)\in\Ord(I)}
\left(
\prod_{a=1}^{m}
\delta_{q_{i_a},q_{i_{a+1}+1}}
\right)\\
&\qquad\times
h_{k-m}\left(
 x_{q_{i_1+1}},
 x_{q_{i_1}},
 x_{q_{i_2}},
 \ldots,
 x_{q_{i_m}}
\right).
\end{aligned}
\]
\end{enumerate}
\end{corollary}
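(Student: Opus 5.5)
The statement is Corollary \ref{coro: trace gamma}; it follows from the preceding corollary by taking traces. Case (1) is immediate: \(\Gamma_{q,0}^{(k)}(X)=X^k\), and for diagonal \(X\) we have \(\Tr(X^k)=\sum_a x_a^k\).

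For case (2), the plan is to start from the previous corollary. It writes \(\Gamma_{q,I}^{(k)}(X)\) as a sum over \(\bfi\in\Ord(I)\) of the scalar
\[
\prod_{a=1}^{m-1}\delta_{q_{i_a},q_{i_{a+1}+1}}\;h_{k-m}(\cdots)
\]
times the single matrix unit \(E_{q_{i_1+1},q_{i_m}}\). Since \(\Tr\) is linear and \(\Tr(E_{c,d})=\delta_{c,d}\), each summand contributes its scalar coefficient times \(\delta_{q_{i_1+1},q_{i_m}}\).

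With the convention \(i_{m+1}=i_1\), this extra factor is exactly the \(a=m\) term \(\delta_{q_{i_m},q_{i_{m+1}+1}}\). It therefore extends the product to \(\prod_{a=1}^{m}\), which gives the displayed formula.

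The case \(m>k\) needs one check. Here the previous corollary's formula still holds, because \(h_{k-m}=0\) for negative degree. This matches the earlier lemma, which gives \(\Gamma=0\) when \(m>k\), so no separate argument is needed.

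The only point requiring any care is the index bookkeeping for the previous corollary, in case one wants to verify it rather than cite it. In the product \(X^{\ell_0}E_{q_{i_1+1},q_{i_1}}X^{\ell_1}\cdots\) with \(X\) diagonal, \(X^{\ell}E_{c,d}=x_c^{\ell}E_{c,d}\) and \(E_{c,d}X^{\ell}=x_d^{\ell}E_{c,d}\). Consecutive units multiply as \(E_{c,d}E_{c',d'}=\delta_{d,c'}E_{c,d'}\), which yields the chain of deltas \(\delta_{q_{i_a},q_{i_{a+1}+1}}\). The exponents then collect into the monomial \(x_{q_{i_1+1}}^{\ell_0}x_{q_{i_1}}^{\ell_1}\cdots x_{q_{i_m}}^{\ell_m}\), where the delta constraints let each middle variable be written either way. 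Summing over weak compositions of \(k-m\) gives \(h_{k-m}\). Once that corollary is accepted, the trace step above is routine.
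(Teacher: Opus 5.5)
Your proposal is correct and follows the paper's argument. The paper also obtains this corollary by taking the trace of the preceding diagonal-specialization corollary: it uses \(\mathrm{Tr}(E_{c,d})=\delta_{c,d}\) and the cyclic convention \(i_{m+1}=i_1\) (with \(q_{r+1}=q_0\)) to absorb \(\delta_{q_{i_1+1},q_{i_m}}\) as the \(a=m\) factor, while your observations about \(m>k\) and the empty product for \(m=1\) make explicit what the paper leaves implicit.
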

The  index \(i_{a+1}+1\) may equal \(r+1\),
which is incorrect.
In the graph notation below, we represent this successor by the
vertex \(0\), in accordance with \(q_{r+1}=q_0\) in
\eqref{for: trace w_sr}.  Thus, for \(1\leq j<r\), the successor
of \(j\) is \(j+1\), and the successor of \(r\) is \(0\).

We record two elementary examples that illustrate the preceding coefficient formula.

\begin{example}
Let \(X=\operatorname{diag}(x_1,\dots,x_n)\).  Write
\(\varepsilon_j\in\bbZ_{\geq 0}^r\) for the standard multi-index whose
\(j\)-th entry is \(1\) and whose other entries are \(0\).

First, let \(k\geq1\) and take \(I=\varepsilon_j\).  Then \(m=1\), and
\(\Ord(I)=\{(j)\}\).  Corollary~\ref{coro: trace gamma} gives
\[
\Tr \left(
\Gamma_{q,\varepsilon_j}^{(k)}(X)
\right)
=
\delta_{q_j,q_{j+1}}
h_{k-1}(x_{q_{j+1}},x_{q_j})
=
k\delta_{q_j,q_{j+1}}x_{q_j}^{k-1}.
\]

This is the coefficient form of the elementary identity
\[
   \left.
\frac{\partial}{\partial u_j}
\Tr\!\left(
\bigl(X+u_jE_{q_{j+1},q_j}\bigr)^k
\right)
\right|_{u_j=0}
   =
   k\,\delta_{q_{j},q_{j+1}}\,x_{q_j}^{k-1}.
\]
\end{example}

\begin{example}
Assume \(r=2\) and take
\(I=(1,1)=\varepsilon_1+\varepsilon_2\).  Then
\[
   \Ord(I)=\{(1,2),(2,1)\}.
\]
For \(k\geq 2\), Corollary~\ref{coro: trace gamma} gives
\[
\begin{aligned}
\Tr \left(
\Gamma_{q,(1,1)}^{(k)}(X)
\right)
&=
\delta_{q_0,q_1}
h_{k-2}(x_{q_2},x_{q_1},x_{q_2})
\\
&\quad+
\delta_{q_0,q_1}
h_{k-2}(x_{q_0},x_{q_2},x_{q_1}).
\end{aligned}
\]
In particular, for \(k=2\),
\[
	\Tr \left(
		\Gamma_{q,(1,1)}^{(2)}(X)
		\right)
		=
		2\delta_{q_0,q_1}.
\]
\end{example}

The preceding identities provide the matrix coefficients required in the proof of Theorem \ref{thm: radial part}; in particular, Corollary \ref{coro: trace gamma} gives the diagonal trace coefficients. It remains to reorganize the resulting sums and prove that the coefficients
\[
\sum_q {\delta_{q_0, q_1}}x_{q_0}^s
\Tr(\Gamma_{q,\pi}^{\alpha}(x_1,\dots,x_n))
\]
are the power-sum polynomials asserted in Theorem \ref{thm: radial part}.  We first examine the case \(r=1\).

\begin{example}
For \(r=1\), the only set partition of \([1]\) is
\(\pi=\{\{1\}\}\).
Set \(\alpha(\{1\})=a\).
Corollary~\ref{coro: trace gamma}
gives
\[
   \Tr\left(\Gamma_{q,I_{\{1\}}}^{(a)}(X)\right)
   =
   \delta_{q_0,q_1}\,a\,x_{q_1}^{a-1}.
\]
Hence the formula gives
\[
\begin{aligned}
   \hc'\bigl(\Tr^q(X^sD)\bigr)
   &=
   \sum_{a=1}^n
   \left(
      \sum_{q_0,q_1=1}^n
      \delta_{q_0,q_1}x_{q_0}^s
      \delta_{q_0,q_1}\,a\,x_{q_1}^{a-1}
   \right)
   \partial_{p_a}  \\
   &=
   \sum_{a=1}^n
   a\,p_{a+s-1}\,\partial_{p_a}.
\end{aligned}
\]

On the other hand, the chain rule gives
\[
   \hc'\bigl(\Tr^q(X^sD)\bigr)
   =
   {\sum_{i=1}^n x_i^s\partial_{x_i}}
   =
   \sum_{a=1}^n
   a\,p_{a+s-1}\,\partial_{p_a}.
\]
\end{example}

We now introduce the combinatorial notation used to treat the general
coefficient.
For a set partition \(\pi\) of \([r]\) and a map
\(\alpha:\pi\to\{1,\dots,n\}\), define
\[
\Ord(\pi):=\prod_{B\in\pi}\Ord(I_B);
\]
for every block \(B\in\pi\), write
\(\bfi^B=(i_1^B,\ldots,i_{|B|}^B),\ i_{|B|+1}^B:=i_1^B\);
and
\[
\Lambda(\pi,\alpha):=
\left\{\bfl=(\ell_t^B)_{B\in\pi,\,0\leq t\leq |B|}\ \middle|\
\begin{array}{l}
 \ell_t^B\in\bbZ_{\geq0},\\
 \sum_{t=0}^{|B|}\ell_t^B=\alpha(B)-|B|\text{ for every }B\in\pi
\end{array}
\right\}.
\]
If \(\alpha(B)<|B|=|I_B|\) for some block \(B\), this set is empty, in agreement
with the convention \(h_d=0\) for \(d<0\).

Using Corollary \ref{coro: trace gamma} and expanding the complete homogeneous
symmetric polynomials, we obtain
\begin{equation}\label{for: coeff expand}
	\begin{aligned}
		&\sum_q
		\delta_{q_0,q_1}x_{q_0}^{s}
		\Tr \left(
		\Gamma_{q,\pi}^{\alpha}(x_1,\ldots,x_n)
		\right)
		\\
		&=
		\sum_{\bfi\in\Ord (\pi)}
		\sum_{\bfl\in\Lambda(\pi,\alpha)}
		\sum_q
		\delta_{q_0,q_1}x_{q_0}^{s}
		\prod_{B\in\pi}
		\left[
		\left(
		\prod_{t=1}^{|B|}
		\delta_{q_{i_t^B},q_{i_{t+1}^B+1}}
		\right)
		x_{q_{i_1^B+1}}^{\ell_0^B}
		\prod_{t=1}^{|B|}
		x_{q_{i_t^B}}^{\ell_t^B}
		\right].
		\end{aligned}
\end{equation}

Fix \(\pi\), \(\alpha\), \(\bfi\in\Ord(\pi)\), and
\(\bfl\in\Lambda(\pi,\alpha)\).  Define \(G_{\pi,\bfi}\) to be the graph
with vertices \(\{0,1,\ldots,r\}\), allowing loops and
repeated edges, whose edges are the distinguished edge \(\{0,1\}\) and
\[
\{i_t^B,i_{t+1}^B+1\},
\qquad B\in\pi,
\quad 1\leq t\leq|B|.
\]
All successor labels are understood cyclically according to the
convention above.  Let \(\mC(G_{\pi,\bfi})\) denote its set of connected
components.

Appendix \ref{sect:examples} gives an \(r=4\) example illustrating how the graph is assembled from the combinatorial data.

{For \(j\in\{0,1,\dots,r\}\), define
\[
\lambda_j
=
s\ind_{\{0\}}(j)
+
\sum_{B\in\pi}
\left(
\ell_0^B\ind_{\{i_1^B +1\}}(j)
+
\sum_{t=1}^{|B|}
\ell_t^B\ind_{\{i_t^B\}}(j)
\right),
\]
where \(\ind_A(j)=1\) if \(j\in A\) and \(\ind_A(j)=0\) otherwise.  For each connected component \(C\in\mC(G_{\pi,\bfi})\), set
\[
L_C:=\sum_{j\in C}\lambda_j.
\]}

The following identity explains precisely how the Kronecker-delta
constraints produce power sums.

\begin{proposition}[Power-sum coefficient formula]\label{prop: pow sum coeff}
	With the notation above,
	\begin{equation}
		\sum_{q_0,\ldots,q_r=1}^{n}
			\delta_{q_0,q_1}x_{q_0}^{s}
			\prod_{B\in\pi}
			\left[
			\left(
			\prod_{t=1}^{|B|}
			\delta_{q_{i_t^B},q_{i_{t+1}^B + 1}}
			\right)
			x_{q_{i_1^B + 1}}^{\ell_0^B}
			\prod_{t=1}^{|B|}
			x_{q_{i_t^B}}^{\ell_t^B}
			\right]
			=
			\prod_{C\in\mC(G_{\pi,\bfi})}
			p_{L_C}(x_1,\ldots,x_n).
	\end{equation}
\end{proposition}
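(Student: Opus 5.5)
The plan is to read the left-hand side as a sum over colourings of the vertex set \(\{0,1,\dots,r\}\) by \(\{1,\dots,n\}\), namely \(j\mapsto q_j\). The Kronecker deltas then impose that \(q\) is constant along each edge of \(G_{\pi,\bfi}\), and the remaining monomial is \(\prod_j x_{q_j}^{\lambda_j}\).

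\textbf{Step 1: identify the deltas with the edges.} The distinguished factor \(\delta_{q_0,q_1}\) corresponds to the edge \(\{0,1\}\). For each block \(B\) and each \(1\leq t\leq|B|\), the factor \(\delta_{q_{i_t^B},q_{i_{t+1}^B+1}}\) corresponds to the edge \(\{i_t^B,i_{t+1}^B+1\}\), with the cyclic conventions \(i_{|B|+1}^B=i_1^B\) and successor of \(r\) equal to \(0\). A product of deltas over all edges of a graph equals \(1\) exactly when \(q\) is constant on each connected component, and \(0\) otherwise. This holds equally when the graph has loops, which give \(\delta_{q_j,q_j}=1\), and repeated edges, which give idempotent repeated factors. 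Hence the sum over \(q\in\{1,\dots,n\}^{r+1}\) collapses to a sum over functions \(c:\mC(G_{\pi,\bfi})\to\{1,\dots,n\}\), with \(q_j=c(C)\) for \(j\in C\).

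\textbf{Step 2: identify the exponents.} Collect the powers of each variable \(x_{q_j}\) appearing in the product:
\begin{itemize}
\item the factor \(x_{q_0}^s\) contributes \(s\) to vertex \(0\);
\item each \(x_{q_{i_1^B+1}}^{\ell_0^B}\) contributes \(\ell_0^B\) to vertex \(i_1^B+1\), again read cyclically;
\item each \(x_{q_{i_t^B}}^{\ell_t^B}\) contributes \(\ell_t^B\) to vertex \(i_t^B\).
\end{itemize}
This is exactly the definition of \(\lambda_j\), so the monomial equals \(\prod_{j=0}^r x_{q_j}^{\lambda_j}\). On a constant colouring this becomes \(\prod_{C}x_{c(C)}^{L_C}\). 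The only care needed is that \(i_1^B+1\) is interpreted via the successor convention, so that it agrees with the vertex labelling of \(G_{\pi,\bfi}\). This bookkeeping is, I expect, the main (minor) obstacle: checking that the index \(r+1\) is consistently replaced by \(0\) both in the edges and in the \(\lambda_j\).

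\textbf{Step 3: factorize.} The sum over independent choices \(c(C)\in\{1,\dots,n\}\) for each component factorizes:
\[
\sum_{c}\prod_C x_{c(C)}^{L_C}=\prod_{C}\sum_{a=1}^n x_a^{L_C}=\prod_C p_{L_C}.
\]
This uses the convention \(p_0=n\), which is needed when a component carries total exponent zero. That completes the proof.
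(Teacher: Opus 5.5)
Your proposal is correct and follows essentially the same argument as the paper: the Kronecker deltas force \(q\) to be constant on each connected component of \(G_{\pi,\bfi}\), the monomial is regrouped via the \(\lambda_j\) as \(\prod_{C} x_{a_C}^{L_C}\), and the sum over independent component labels factorizes into \(\prod_{C\in\mC(G_{\pi,\bfi})} p_{L_C}\). Your remarks on loops, repeated edges, the cyclic successor convention (index \(r+1\) read as vertex \(0\)), and the convention \(p_0=n\) are details the paper leaves implicit.
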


\begin{proof}
The product of Kronecker deltas on the left-hand side imposes exactly the equalities \(q_a=q_b\) along the edges \(\{a,b\}\) of \(G_{\pi,\bfi}\).
Thus this Kronecker delta condition implies choosing one index \(a_C\in\{1,\dots,n\}\) for each connected component \(C\in\mC(G_{\pi,\bfi})\) and setting \(q_j=a_C\) for every \(j\in C\).
By the definitions of \(\lambda_j\) and \(L_C\),
\[
	x_{q_0}^{s}
	\prod_{B\in\pi}
	\left[
	x_{q_{i_1^B + 1}}^{\ell_0^B}
	\prod_{t=1}^{|B|}
	x_{q_{i_t^B}}^{\ell_t^B}
	\right]
=
\prod_{j=0}^r x_{q_j}^{\lambda_j}
=
	\prod_{C\in\mC(G_{\pi,\bfi})}
	\prod_{j\in C}x_{a_C}^{\lambda_j}
	=
	\prod_{C\in\mC(G_{\pi,\bfi})}
	x_{a_C}^{L_C}.
\]
Therefore
\[
	\begin{aligned}
		&\sum_{q_0,\ldots,q_r=1}^{n}
		\delta_{q_0,q_1}x_{q_0}^{s}
		\prod_{B\in\pi}
		\left[
		\left(
		\prod_{t=1}^{|B|}
		\delta_{q_{i_t^B},q_{i_{t+1}^B + 1}}
		\right)
		x_{q_{i_1^B + 1}}^{\ell_0^B}
		\prod_{t=1}^{|B|}
		x_{q_{i_t^B}}^{\ell_t^B}
		\right] \\
		&=
		\sum_{(a_C)_{C\in\mC(G_{\pi,\bfi})}}
		\prod_{C\in\mC(G_{\pi,\bfi})}
		x_{a_C}^{L_C} \\
		&=
		\prod_{C\in\mC(G_{\pi,\bfi})}
		\left(\sum_{a_C=1}^{n}x_{a_C}^{L_C}\right) \\
		&=
		\prod_{C\in\mC(G_{\pi,\bfi})}
		p_{L_C}(x_1,\dots,x_n).
	\end{aligned}
\]
This proves the formula.
\end{proof}

Applying Proposition \ref{prop: pow sum coeff} to \eqref{for: coeff expand}, we obtain
\begin{equation}
	\sum_{q} {\delta_{q_0, q_1}}x_{q_0}^s
	\Tr(\Gamma_{ q, \pi} ^\alpha (x_1, \dots, x_n))
	=
	\sum_{\bfi \in \Ord(\pi)}
	\sum_{\bfl \in \Lambda(\pi, \alpha)}
	\Bigl(
	\prod_{C\in\mC(G_{\pi,\bfi})}
	p_{L_C}(x_1,\dots,x_n)
	\Bigr).
\end{equation}
This identity gives a finite algorithm for every coefficient.  Example \ref{Appexample:r=4contd} illustrates the computation when \(r=4\).

We now complete the proof of Theorem \ref{thm: radial part}.

\begin{proof}[Proof of Theorem \ref{thm: radial part}]
Let \(f\in\bbC[\gl_n (\bbC)]^{\gl_n (\bbC)}\) and write
\[
	f|_{\fkh}=F(p_1,\dots,p_n).
\]
The polynomial
\[
X\longmapsto F\bigl(\Tr(X),\Tr(X^2),\ldots,\Tr(X^n)\bigr)
\]
is \(\gl_n (\bbC)\)-invariant and has the same restriction to \(\fkh\) as
\(f\).  By the injectivity of Chevalley restriction,
\[
f(X)=F\bigl(\Tr(X),\Tr(X^2),\ldots,\Tr(X^n)\bigr)
\qquad (X\in\gl_n (\bbC)).
\]
Consequently, by the definition of the quantum trace operator and the
choice \(D=(\partial_{x_{ji}})\), the action of
\(\Tr^q(X^sD^r)\) on \(f\), restricted to \(\fkh\), is
\[
	\bigl(\Tr^q(X^sD^r)f\bigr)|_{\fkh}
	=
	\sum_q \hbar^r
	{\delta_{q_0, q_1}} x_{q_0}^s
	\left.
	\frac{\partial ^r F\bigl(\Tr(M_q(u)),\dots,\Tr(M_q(u)^n)\bigr)}{\partial u_1 \dots \partial u_r}
	\right|_{u=0}.
\]

Here the factor \(\delta_{q_0, q_1}\) comes from restricting the matrix entry \((X^s)_{q_0, q_1}\) to the diagonal Cartan subalgebra.  
Set \(g_a(u)=\Tr(M_q(u)^a)\) for \(1\leq a\leq n\).  Applying the multivariate Fa\`a di Bruno formula 
\eqref{Brunosformula}
in Appendix \ref{app: fdb}, we obtain
\[
	\left.
	\frac{\partial ^r F\bigl(g_1(u),\dots,g_n(u)\bigr)}{\partial u_1 \dots \partial u_r}
	\right|_{u=0}
	=
	\sum_{(\pi,\alpha)}
	(\pa_v^\alpha F)(g(0))
	\prod_{B\in\pi}
	\left.
	\left(\prod_{j\in B}\frac{\partial}{\partial u_j}\right)
	\Tr\bigl(M_q(u)^{\alpha(B)}\bigr)
	\right|_{u=0}.
\]
By the squarefree coefficient identity above,
\[
\left.
\left(
\prod_{j\in B}\frac{\partial}{\partial u_j}
\right)
\Tr \left(
M_q(u)^{\alpha(B)}
\right)
\right|_{u=0}
=
\Tr \left(
\Gamma_{q,I_B}^{(\alpha(B))}
\right).
\]

Therefore
\begin{equation}\label{for: radial part coeff}
\begin{aligned}
\bigl(\Tr^q(X^sD^r)f\bigr)|_{\fkh}
&=
\sum_{(\pi,\alpha)}\hbar^{r-|\pi|}
\left(
\sum_q\delta_{q_0,q_1}x_{q_0}^s
\Tr\bigl(\Gamma_{q,\pi}^{\alpha}(x_1,\ldots,x_n)\bigr)
\right)
\partial_p^\alpha(F)\\
&=
\sum_{(\pi,\alpha)}\hbar^{r-|\pi|}
\left\{
\sum_{\bfi\in\Ord(\pi)}
\sum_{\bfl\in\Lambda(\pi,\alpha)}
\prod_{C\in\mC(G_{\pi,\bfi})}
p_{L_C}(x_1,\ldots,x_n)
\right\}
\partial_p^\alpha(F).
\end{aligned}
\end{equation}
Note that
\[
\hbar^r
\left.
\left(
\prod_{B\in\pi}
\frac{\partial}{\partial v_{\alpha(B)}}
\right)
F(v_1,\ldots,v_n)
\right|_{(v_1,\ldots,v_n)=(p_1,\ldots,p_n)}
=
\hbar^{r-|\pi|}
\partial_p^\alpha(F),
\]
where \(\partial_p^\alpha\) contains \(|\pi|\) \(\hbar\)-scaled
derivations.  By the defining property of \(\hc'\), this is the asserted formula for \(\hc'(\Tr^q(X^sD^r))\).
This completes the proof.
\end{proof}

\subsection{Harish--Chandra homomorphism in power-sum coordinates}
\label{subsec: hc power sum formula}

We now pass from the radial-part formula of Theorem~\ref{thm: radial part} to the Harish--Chandra homomorphism.
For Harish--Chandra conjugation, \(\partial_{p_a}\) also denotes its
unique \(S_n\)-invariant lift to \(\fkh_{\reg}\); this lift exists
because \(\fkh_{\reg}\to\fkh_{\reg}/S_n\) is finite étale.  Define
\[
\delta\,\partial_{p_a}\,\delta^{-1}
=
\partial_{p_a}-\frac{1}{2}\partial_{p_a}(\ln\delta^2),
\qquad 1\leq a\leq n.
\]
The last expression is algebraic on \(\fkh_{\reg}\).  Moreover,
\[
\delta[\partial_{p_a},\partial_{p_b}]\delta^{-1}
=0,
\]
so the product
\(\prod_{B\in\pi}\delta\,\partial_{p_{\alpha(B)}}\,\delta^{-1}\)
is independent of the ordering of the blocks \(B\).

Next, we write the coefficients in \eqref{for: radial part coeff} as follows.
\begin{equation}\label{eq:C-pi-alpha-def-hc}
   C_{\pi,\alpha}^{s,r}(p)
   :=
   \sum_{\bfi\in\Ord(\pi)}
   \sum_{\bfl\in\Lambda(\pi,\alpha)}
   \prod_{C\in\mC(G_{\pi,\bfi})}p_{L_C}.
\end{equation}

\begin{remark}\label{rmk: algorithm for coeff}
We summarize the steps of our algorithm for computing the coefficients as follows.
\begin{enumerate}
\item Choose \(\bfi\in\Ord(\pi)\).
\item Construct \(G_{\pi,\bfi}\), including the distinguished edge
      \(\{0,1\}\).
\item Determine the connected components
      \(C\in\mC(G_{\pi,\bfi})\).
\item For each \({\bf l} \in \Lambda (\pi, \alpha)\),
	assign the vertex weights \(\lambda_j\).
\item For every component, compute
      \(L_C=\sum_{j\in C}\lambda_j\).
\item Record the graph contribution
      \(\prod_{C\in\mC(G_{\pi,\bfi})}p_{L_C}\).
\item Sum these contributions over
      \(\bfi\in\Ord(\pi)\) and
      \(\bfl\in\Lambda(\pi,\alpha)\).
\end{enumerate}
\end{remark}

Theorem \ref{thm: radial part}, together with the construction in Section \ref{subsec: tilde Tr}, yields the following formula for the Harish--Chandra homomorphism.

\begin{proposition}[Harish--Chandra homomorphism in power sums]
	\label{prop: hc formula power sums}
	Let \(\fkg=\gl_n(\bbC)\) and let \(\fkh\) be the Cartan subalgebra
	of diagonal matrices. For \(s\in\bbZ_{\geq0}\) and
	\(r\in\bbZ_{\geq1}\),
	\[
	\hc(\Tr^q(X^sD^r))
	=
	\sum_{(\pi,\alpha)}\hbar^{r-|\pi|}
	C_{\pi,\alpha}^{s,r}(p)
	\prod_{B\in\pi}
	\left(
	\partial_{p_{\alpha(B)}}
	-
	\frac12\partial_{p_{\alpha(B)}}(\ln\delta^2)
	\right).
	\]
\end{proposition}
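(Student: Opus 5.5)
The plan is to derive Proposition~\ref{prop: hc formula power sums} directly from Theorem~\ref{thm: radial part} and the definition \(\hc(P)=\delta\,\hc'(P)\,\delta^{-1}\).

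The first step is to rewrite Theorem~\ref{thm: radial part} in the notation \eqref{eq:C-pi-alpha-def-hc}. By \eqref{for: radial part coeff} and Proposition~\ref{prop: pow sum coeff},
\[
\hc'\bigl(\Tr^q(X^sD^r)\bigr)=\sum_{(\pi,\alpha)}\hbar^{r-|\pi|}\,C^{s,r}_{\pi,\alpha}(p)\,\partial_p^\alpha .
\]
This identity is to be read in \(\mD_\hbar(\fkh_{\reg})^{S_n}\), where the \(\partial_{p_a}\) are the invariant lifts. One point needs care here. The theorem was established by testing on invariant functions \(f|_\fkh=F(p)\). An operator in \(\mD(\fkh_{\reg})^{S_n}\) is determined by its action on \(\bbC[\fkh]^{S_n}\), because \(\fkh_{\reg}\to\fkh_{\reg}/S_n\) is étale and invariant operators descend to \(\fkh_{\reg}/S_n\), whose coordinates are \(p_1,\dots,p_n\). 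So the displayed expression, which has the correct action on \(\bbC[p_1,\dots,p_n]\), equals \(\hc'(\Tr^q(X^sD^r))\) as an element of \(\mD_\hbar(\fkh_{\reg})^{S_n}\).

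The second step is to conjugate by \(\delta\). Conjugation \(P\mapsto \delta P\delta^{-1}\) is an algebra automorphism of \(\mD_\hbar(\fkh_{\reg})\). It fixes multiplication operators, so it fixes each \(C^{s,r}_{\pi,\alpha}(p)\) and each scalar power of \(\hbar\). Since it is multiplicative, it sends \(\partial_p^\alpha=\prod_{B\in\pi}\partial_{p_{\alpha(B)}}\) to \(\prod_{B\in\pi}\delta\,\partial_{p_{\alpha(B)}}\delta^{-1}\).

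It then remains to compute a single conjugate. The lift \(\partial_{p_a}\) acts as \(\hbar\) times a derivation \(\xi_a\), so
\[
\delta\,\partial_{p_a}\,\delta^{-1}=\partial_{p_a}+\hbar\,\delta\,\xi_a(\delta^{-1})=\partial_{p_a}-\hbar\,\xi_a(\ln\delta).
\]
Writing \(\ln\delta=\tfrac12\ln\delta^2\) gives
\[
\delta\,\partial_{p_a}\,\delta^{-1}=\partial_{p_a}-\tfrac12\partial_{p_a}(\ln\delta^2).
\]
This matches the convention fixed before the proposition, where \(\partial_{p_a}(\cdot)\) applied to a function carries the \(\hbar\)-scaling. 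The expression is \(S_n\)-invariant and regular on \(\fkh_{\reg}\), because \(\delta^2\) is invariant and \(\partial_{p_a}(\delta^2)/\delta^2\) is a rational function with poles only along the root hyperplanes. The conjugated derivations still commute pairwise, so the product over blocks is unambiguous.

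Combining the two steps gives the stated formula. Theorem~\ref{thm: hc regular} additionally guarantees that the result lies in \(\mD_\hbar(\fkh)^{S_n}\), although the individual terms live only on \(\fkh_{\reg}\). The main obstacle I expect is the first step: justifying that the radial-part identity, proved only by its action on invariant functions, holds as an equality of operators on \(\fkh_{\reg}\). This is what licenses the conjugation. I would settle it by the étale-descent argument above, combined with the fact that \(\hc'(P)\) is by definition the invariant operator determined by \(\hc'(P)(f|_\fkh)=P(f)|_\fkh\).
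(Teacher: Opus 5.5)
Your proposal is correct and follows the same route as the paper. You rewrite \(\hc'(\Tr^q(X^sD^r))\) via Theorem~\ref{thm: radial part} using the coefficients \(C^{s,r}_{\pi,\alpha}(p)\), conjugate by \(\delta\) as an algebra automorphism that fixes those coefficient functions, replace each \(\partial_{p_a}\) by \(\partial_{p_a}-\frac12\partial_{p_a}(\ln\delta^2)\), and invoke Theorem~\ref{thm: hc regular} for regularity of the total. Your extra \'etale-descent remark, that an invariant operator on \(\fkh_{\reg}\) is determined by its action on \(\bbC[p_1,\dots,p_n]\), justifies a step the paper leaves implicit when it reads the radial-part identity as an equality in \(\mD_\hbar(\fkh_{\reg})^{S_n}\).
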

\begin{proof}
In \(\mathcal D_\hbar(\fkh_{\reg})^{S_n}\),
Theorem~\ref{thm: radial part} gives
\[
\hc'\bigl(\Tr^q(X^sD^r)\bigr)
=
\sum_{(\pi,\alpha)}\hbar^{r-|\pi|}
C_{\pi,\alpha}^{s,r}(p)
\prod_{B\in\pi}\partial_{p_{\alpha(B)}}.
\]
Multiplication by \(C_{\pi,\alpha}^{s,r}(p)\) commutes with
multiplication by \(\delta\), and conjugation is an algebra
automorphism.  Hence
\[
\begin{aligned}
\hc\bigl(\Tr^q(X^sD^r)\bigr)
&=\delta\,\hc'\bigl(\Tr^q(X^sD^r)\bigr)\,\delta^{-1}\\
&=
\sum_{(\pi,\alpha)}\hbar^{r-|\pi|}
C_{\pi,\alpha}^{s,r}(p)
\prod_{B\in\pi}
\bigl(\delta\partial_{p_{\alpha(B)}}\delta^{-1}\bigr)\\
&=
\sum_{(\pi,\alpha)}\hbar^{r-|\pi|}
C_{\pi,\alpha}^{s,r}(p)
\prod_{B\in\pi}\left(
	\partial_{p_{\alpha(B)}}
	-
	\frac12\partial_{p_{\alpha(B)}}(\ln\delta^2)
	\right).
\end{aligned}
\]
The factors commute by the calculation above.  Although individual
summands may have poles, Theorem~\ref{thm: hc regular} implies that the
total operator extends across \(\delta=0\) and belongs to
\(\mathcal D_\hbar(\fkh)^{S_n}\).
\end{proof}

We next examine several low-order cases, beginning with first-order quantum trace operators.

\begin{proposition}
	Let \(\fkg = \gl_n (\bbC)\) and let \(\fkh\) be the Cartan subalgebra of diagonal matrices. 
	Let
	\[
		R_0(p)=0,\quad
		\text{and}\quad
		R_s(p)
		=
		\frac12
		\sum_{u=0}^{s-1}
		\bigl(p_u p_{s-1-u}-p_{s-1}\bigr) \text{ for \(s\geq1\).}
	\]
	{Then, for every \(s\in\bbZ_{\geq0}\),}
	\[
   \hc\bigl(\Tr^q(X^sD)\bigr)
   =
   \sum_{a=1}^n
   a\,p_{s+a-1}\partial_{p_a}
   -
   \hbar R_s(p).
   \]
\end{proposition}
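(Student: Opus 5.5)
The plan is to combine the case $r=1$ of Theorem~\ref{thm: radial part} with the Harish--Chandra conjugation of Proposition~\ref{prop: hc formula power sums}. The key idea is to perform the conjugation in the coordinates $x_i$ rather than in the $p_a$, and only at the end to convert back to power sums.

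First I would recall from the $r=1$ example after Corollary~\ref{coro: trace gamma} that
\[
\hc'\bigl(\Tr^q(X^sD)\bigr)=\sum_{i=1}^n x_i^s\partial_{x_i}=\sum_{a=1}^n a\,p_{s+a-1}\partial_{p_a},
\]
where $\partial_{x_i}=\hbar\,\partial/\partial x_i$. Next I would compute $\hc=\delta\,\hc'(\cdot)\,\delta^{-1}$ on $\fkh_{\reg}$. Since $\delta\,\partial_{x_i}\,\delta^{-1}=\partial_{x_i}-\hbar\,\partial(\ln\delta)/\partial x_i$ and multiplication by $x_i^s$ commutes with $\delta$, this gives
\[
\hc\bigl(\Tr^q(X^sD)\bigr)=\sum_{i}x_i^s\partial_{x_i}-\hbar\sum_i x_i^s\sum_{j\neq i}\frac{1}{x_i-x_j}.
\]
The first term is already $\sum_a a\,p_{s+a-1}\partial_{p_a}$ by the chain rule, so it remains to identify the scalar correction.

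Pairing the terms $(i,j)$ and $(j,i)$, the correction becomes
\[
\sum_{i<j}\frac{x_i^s-x_j^s}{x_i-x_j}=\sum_{i<j}\sum_{u=0}^{s-1}x_i^u x_j^{s-1-u}.
\]
In particular, the poles cancel, consistent with Theorem~\ref{thm: hc regular}. The inner sum is symmetric under $u\leftrightarrow s-1-u$, so we may sum over ordered pairs instead:
\[
\sum_{i<j}\sum_{u=0}^{s-1}x_i^ux_j^{s-1-u}=\frac12\sum_{u=0}^{s-1}\sum_{i\neq j}x_i^ux_j^{s-1-u}.
\]
Then I would use
\[
\sum_{i\neq j}x_i^ux_j^{s-1-u}=p_up_{s-1-u}-p_{s-1},
\]
with the convention $p_0=n$ (needed for $u=0$ and $u=s-1$). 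This yields exactly $R_s(p)$. For $s=0$ the correction is an empty sum, which matches $R_0=0$.

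The main point needing care is the sign and $\hbar$ normalization of the conjugation, i.e.\ checking $\delta\,\partial_{x_i}\,\delta^{-1}=\partial_{x_i}-\hbar\,\partial_{x_i}\ln\delta$ with the $\hbar$-scaled generators. One must also check that this agrees with the formula $\partial_{p_a}-\tfrac12\partial_{p_a}(\ln\delta^2)$ in Proposition~\ref{prop: hc formula power sums}, where $\partial_{p_a}$ applied to a function already carries the factor $\hbar$. As a consistency check, I would verify that applying Proposition~\ref{prop: hc formula power sums} directly in the $p$-coordinates gives the same result via the chain rule $\sum_a a\,p_{s+a-1}\,\partial(\cdot)/\partial p_a=\sum_i x_i^s\,\partial(\cdot)/\partial x_i$ applied to $\ln\delta$.
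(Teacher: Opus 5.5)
Your proposal is correct and follows essentially the same route as the paper. The paper obtains the coefficient $C^{s,1}_{\pi,\alpha}=a\,p_{s+a-1}$ from the graph, conjugates in $p$-coordinates, and passes to $x$-coordinates to evaluate the scalar term as $\hbar\sum_{i<j}(x_i^s-x_j^s)/(x_i-x_j)$ before using the same geometric-series identity; your direct conjugation in the $x_i$, with the correct normalization $\delta\,\partial_{x_i}\,\delta^{-1}=\partial_{x_i}-\hbar\,\partial(\ln\delta)/\partial x_i$, simply makes explicit the ``straightforward calculation'' the paper leaves to the reader.
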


\begin{proof}
We first compute the complete coefficient in
\eqref{eq:C-pi-alpha-def-hc} from the graph, for every
\(s\geq0\), and then evaluate the Harish--Chandra conjugation term.

For \(r=1\), the only partition is \(\pi=\{B\}\), where
\(B=\{1\}\). Fix \(\alpha(B)=a\), with \(1\leq a\leq n\).
There is only one ordering, \(\bfi^B=(1)\), and the admissible
weights satisfy
\[
\ell_0^B+\ell_1^B=a-1,
\qquad \ell_0^B,\ell_1^B\geq0.
\]
The cyclic successor of \(1\) is \(0\). Thus the distinguished
edge \(\{0,1\}\) and the block edge \(\{1,0\}\) are parallel,
and the graph has the single connected component \(C=\{0,1\}\).
The vertex and component weights are
\[
(\lambda_0,\lambda_1)=(s+\ell_0^B,\ell_1^B),
\qquad L_C=s+\ell_0^B+\ell_1^B=s+a-1.
\]
Consequently every admissible pair contributes \(p_{s+a-1}\).
The pairs are \((\ell_0^B,\ell_1^B)=(u,a-1-u)\),
\(0\leq u\leq a-1\), so their number is exactly \(a\), and
\[
C_{\pi,\alpha}^{s,1}(p)=a\,p_{s+a-1}.
\]

Since \(\hbar^{r-|\pi|}=1\), Proposition
\ref{prop: hc formula power sums} now gives
\[
\hc\bigl(\Tr^q(X^sD)\bigr)
=\sum_{a=1}^n a\,p_{s+a-1}\partial_{p_a}
-\frac12\sum_{a=1}^n a\,p_{s+a-1}
\partial_{p_a}(\ln\delta^2).
\]
Only the last, scalar term remains to be evaluated. On
\(\fkh_{\reg}\), the chain rule gives
\[
\sum_{a=1}^n a\,p_{s+a-1}\partial_{p_a}
=\sum_{i=1}^n x_i^s\partial_{x_i}.
\]
A straightforward calculation shows that
\[
\frac12\sum_{a=1}^n a\,p_{s+a-1}
\partial_{p_a}(\ln\delta^2)
=\hbar\sum_{1\leq i<j\leq n}
\frac{x_i^s-x_j^s}{x_i-x_j}.
\]
For \(s=0\), every numerator is zero, so this equals
\(\hbar R_0(p)=0\). For \(s\geq1\), the finite geometric-series
identity yields
\[
\begin{aligned}
2\sum_{1\leq i<j\leq n}\frac{x_i^s-x_j^s}{x_i-x_j}
&=\sum_{i\neq j}\sum_{u=0}^{s-1}x_i^{s-1-u}x_j^u\\
&=\sum_{u=0}^{s-1}
\bigl(p_{s-1-u}p_u-p_{s-1}\bigr)
=2R_s(p).
\end{aligned}
\]
Thus, for every \(s\geq0\),
\[
\frac12\sum_{a=1}^n a\,p_{s+a-1}
\partial_{p_a}(\ln\delta^2)=\hbar R_s(p).
\]
Substitution proves the asserted formula.
\end{proof}

\begin{example}[The Euler operator]
Take \(s=1\) in the first-order formula.  Since
\[
	R_1(p)=\frac{n(n-1)}2,
\]
we obtain
\[
	\hc\bigl(\Tr^q(XD)\bigr)
	=
	\sum_{a=1}^n a\,p_a\partial_{p_a}
	-
	\hbar\frac{n(n-1)}2.
\]

\end{example}

\begin{example}
Let \(n=s=2\).  The formula gives
\[
   \hc\bigl(\Tr^q(X^2D)\bigr)
   =
   p_2\partial_{p_1}
   +
   2p_3\partial_{p_2}
   -
   \hbar p_1.
\]

\end{example}

We next turn to second-order quantum trace operators, focusing on the Laplacian.

\begin{proposition}
Let \(\fkg = \gl_n (\bbC)\) and let \(\fkh\) be the Cartan subalgebra of diagonal matrices. 
Then the image of \(\Tr^q(D^2)\) under the Harish--Chandra homomorphism is
\begin{equation}
		\hc(\Tr^q(D^2))
	   =\sum_{a,b=1}^n
	   ab\,p_{a+b-2}
	   \partial_{p_a}\partial_{p_b}
	   +
	   \hbar
	   \sum_{a=2}^n
	   a(a-1)p_{a-2}\partial_{p_a}.
\end{equation}
\end{proposition}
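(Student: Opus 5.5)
The plan is to avoid the graph expansion of Proposition \ref{prop: hc formula power sums} altogether and instead go through Proposition \ref{prop: hc laplacian}, then convert the result to power-sum coordinates by the chain rule; the graph formula will serve only as a cross-check.

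\emph{Step 1: identify $\Tr^q(D^2)$ with the Laplacian.} By \eqref{for: trace w_sr} with $s=0$, $r=2$,
\[
\Tr^q(D^2)=\sum_{q_0,q_1,q_2}\delta_{q_0,q_1}\,\partial_{x_{q_2,q_1}}\partial_{x_{q_0,q_2}}
=\sum_{i,j=1}^n\partial_{x_{ji}}\partial_{x_{ij}} .
\]
This is the $\hbar$-Weyl version of the Laplacian $\Delta_{\fg}$ attached to the trace form $(A,B)\mapsto\operatorname{tr}(AB)$ on $\gl_n(\bbC)$. Indeed, the dual basis of $E_{ij}$ under the trace form is $E_{ji}$, and the factors $\partial_{x_{ji}}$ and $\partial_{x_{ij}}$ commute, so their order does not matter. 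The restriction of the trace form to $\fkh$ is the standard form, so the corresponding Laplacian on $\fkh$ is $\Delta_{\fkh}=\sum_{i}\partial_{x_i}^2$, where $\partial_{x_i}=\hbar\,\partial/\partial x_i$.

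\emph{Step 2: apply Proposition \ref{prop: hc laplacian}.} That proposition is stated for ordinary differential operators, so I first have to pass to the $\hbar$-setting. Both $\Delta_{\fg}$ and $\Delta_{\fkh}$ are homogeneous of order $2$. The $\hbar$-version $\hc^{\rm Weyl}$ was obtained from the filtered map $\hc$, so it carries $\hbar^2\Delta_{\fg}$ to $\hbar^2\Delta_{\fkh}$. This gives
\[
\hc(\Tr^q(D^2))=\sum_{i=1}^n\partial_{x_i}^2
\]
in $\mD_\hbar(\fkh)^{S_n}$. I expect this step to be the only delicate point. One has to check that the sign and transpose conventions for $D=(\partial_{x_{ji}})$ really produce the trace-form Laplacian rather than, say, $\sum_{i,j}\partial_{x_{ij}}^2$. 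One also has to check that the normalization of the invariant form does not introduce a scalar factor.

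\emph{Step 3: rewrite in power sums.} On $\fkh_{\reg}$, the chain rule for the invariant lifts gives
\[
\partial_{x_i}=\sum_{a=1}^n a\,x_i^{a-1}\partial_{p_a},
\]
exactly as in the $r=1$ example. Applying $\partial_{x_i}$ once more and using $[\partial_{x_i},x_i^{a-1}]=\hbar(a-1)x_i^{a-2}$, I get
\[
\partial_{x_i}^2=\sum_{a,b=1}^n ab\,x_i^{a+b-2}\partial_{p_a}\partial_{p_b}+\hbar\sum_{a=2}^n a(a-1)x_i^{a-2}\partial_{p_a}.
\]
Summing over $i$ turns $x_i^{a+b-2}$ into $p_{a+b-2}$ and $x_i^{a-2}$ into $p_{a-2}$, which yields the asserted formula. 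Both sides are in $\mD_\hbar(\fkh)^{S_n}$ and agree on the dense open set $\fkh_{\reg}$, so the identity holds as stated.

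\emph{Cross-check via the graph formula.} Alternatively, the graph formula of Theorem \ref{thm: radial part} with $r=2$ should give the same answer. The partition $\{\{1\},\{2\}\}$ produces the coefficient $ab\,p_{a+b-2}$ of $\partial_{p_a}\partial_{p_b}$. The partition $\{\{1,2\}\}$ produces $\hbar\,a(a-1)p_{a-2}$, using the example $\Tr(\Gamma^{(k)}_{q,(1,1)})$ computed above. For this route to agree with the one above, the extra term $-\tfrac12\partial_{p_a}(\ln\delta^2)$ coming from the conjugation by $\delta$ must cancel. That cancellation is exactly the content of Proposition \ref{prop: hc laplacian}, which is why I prefer to invoke it directly rather than verify it by hand.
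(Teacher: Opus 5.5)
Your argument is correct, but it takes a different route from the paper's. The paper runs its general machinery. It evaluates the graph coefficients of Theorem~\ref{thm: radial part} for $s=0$, $r=2$: the partition $\{\{1\},\{2\}\}$ gives $ab\,p_{a+b-2}$, and the one-block partition gives $a\sum_{u=0}^{a-2}p_up_{a-2-u}$ once both orderings are counted. This yields $\hc'(\Tr^q(D^2))$ explicitly. The paper then performs the $\delta$-conjugation ``by a straightforward calculation'', citing Proposition~\ref{prop: hc laplacian} only as a consistency check.

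You instead identify $\Tr^q(D^2)=\sum_{i,j}\partial_{x_{ji}}\partial_{x_{ij}}$ with $\hbar^2\Delta_{\fg}$ for the trace form. You apply $\hc(\Delta_{\fg})=\Delta_{\fkh}$ through the filtered extension of $\hc$ to $\hbar$-Weyl algebras, and finish with the chain rule $\partial_{x_i}=\sum_a a\,x_i^{a-1}\partial_{p_a}$ on $\fkh_{\reg}$. Your convention check (the dual of $E_{ij}$ is $E_{ji}$, and rescaling the form affects both Laplacians equally) and your expansion of $\partial_{x_i}^2$ are both right.

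Your route is shorter and sidesteps the conjugation step the paper leaves implicit. That step requires the identity $\sum_{i\neq j}x_i^{a-1}/(x_i-x_j)=\tfrac12\sum_{u=0}^{a-2}(p_up_{a-2-u}-p_{a-2})$ and the cancellation of all zeroth-order terms. The paper's route, in exchange, exercises the graph formula and produces the radial part as a by-product. It is also the route that generalizes to arbitrary $\Tr^q(X^sD^r)$, where no analogue of $\hc(\Delta_{\fg})=\Delta_{\fkh}$ is available.

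Your ``cross-check'' paragraph is wrong as written, although Steps~1--3 do not depend on it. The graph formula computes the radial part $\hc'$, not $\hc$. There the block partition $\{\{1,2\}\}$ contributes $\hbar\,a\sum_{u=0}^{a-2}p_up_{a-2-u}\,\partial_{p_a}$ (sum the two terms of $\Tr(\Gamma^{(a)}_{q,(1,1)})$ over $q$), not $\hbar\,a(a-1)p_{a-2}\,\partial_{p_a}$; at $a=2$ these are $2n^2$ versus $2n$.

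Accordingly, the terms produced by replacing $\partial_{p_a}$ with $\partial_{p_a}-\tfrac12\partial_{p_a}(\ln\delta^2)$ do not cancel. Their first-order part is exactly what converts $a\sum_u p_up_{a-2-u}$ into $a(a-1)p_{a-2}$; only the zeroth-order contributions cancel. Proposition~\ref{prop: hc laplacian} amounts to $\hc'(\Delta_{\fg})=\delta^{-1}\Delta_{\fkh}\delta$, not to the vanishing of the conjugation correction. Either correct the cross-check along these lines or drop it.
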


\begin{proof}
We compute all coefficients from the graph side of Proposition
\ref{prop: pow sum coeff}, using \eqref{eq:C-pi-alpha-def-hc} with
\(s=0\) and \(r=2\), and then perform \(\delta\)-conjugation.
There are exactly two set partitions of \([2]\).

First take \(\pi_{\mathrm{disc}}=\{\{1\},\{2\}\}\), with
\(\alpha(\{1\})=a\) and \(\alpha(\{2\})=b\).
The singleton orderings are fixed, and
\[
\ell_0^{\{1\}}+\ell_1^{\{1\}}=a-1,
\qquad
\ell_0^{\{2\}}+\ell_1^{\{2\}}=b-1.
\]
The graph edges are \(\{0,1\}\), \(\{1,2\}\), and \(\{2,0\}\),
where the last edge uses the cyclic successor of \(2\), namely \(0\).
Hence \(C=\{0,1,2\}\) is the only connected component. Its weights are
\[
\begin{aligned}
(\lambda_0,\lambda_1,\lambda_2)
&=\bigl(\ell_0^{\{2\}},\ell_1^{\{1\}},
\ell_0^{\{1\}}+\ell_1^{\{2\}}\bigr),\\
L_C&=(a-1)+(b-1)=a+b-2.
\end{aligned}
\]
Therefore,
it contributes
\(p_{a+b-2}\), and
\[
C_{\pi_{\mathrm{disc}},\alpha}^{0,2}(p)=ab\,p_{a+b-2}.
\]

Next take \(\pi_{\mathrm{blk}}=\{B\}\), with \(B=\{1,2\}\)
and \(\alpha(B)=a\). For \(a=1\), the set
\(\Lambda(\pi_{\mathrm{blk}},\alpha)\) is empty, so the coefficient
is zero. For \(a\geq2\), the admissible triples satisfy
\[
\ell_0^B+\ell_1^B+\ell_2^B=a-2,
\qquad \ell_0^B,\ell_1^B,\ell_2^B\geq0,
\]
and both orderings \((1,2)\) and \((2,1)\) must be counted.
Together with the distinguished edge \(\{0,1\}\), they give the
same multigraph, with connected components
\[
C_1=\{0,1\},\qquad C_2=\{2\}.
\]
Now, we compute weights:
\[
\begin{array}{c|c|c}
\bfi^B & (\lambda_0,\lambda_1,\lambda_2)&(L_{C_1},L_{C_2})\\ \hline
(1,2)&(0,\ell_1^B,\ell_0^B+\ell_2^B)
     &(\ell_1^B,\ell_0^B+\ell_2^B)\\
(2,1)&(\ell_0^B,\ell_2^B,\ell_1^B)
     &(\ell_0^B+\ell_2^B,\ell_1^B)
\end{array}
\]
Fix \(u=L_{C_1}\), with \(0\leq u\leq a-2\).
For \((1,2)\), one has \(\ell_1^B=u\), leaving
\(a-1-u\) choices for the pair with
\(\ell_0^B+\ell_2^B=a-2-u\).
For \((2,1)\), one has \(\ell_1^B=a-2-u\), leaving
\(u+1\) choices for the pair with \(\ell_0^B+\ell_2^B=u\).
Every one of these choices contributes \(p_u p_{a-2-u}\).
Consequently the coefficient is
\[
\begin{aligned}
C_{\pi_{\mathrm{blk}},\alpha}^{0,2}(p)
&=\sum_{u=0}^{a-2}\bigl((a-1-u)+(u+1)\bigr)p_u p_{a-2-u}\\
&=a\sum_{u=0}^{a-2}p_u p_{a-2-u}.
\end{aligned}
\]
Here \(p_0=n\); in particular, \(a=2\) gives \(2p_0^2=2n^2\).

The powers \(\hbar^{2-|\pi|}\) are respectively \(1\) and
\(\hbar\). Thus Theorem~\ref{thm: radial part}, in the form
\eqref{for: radial part coeff}, gives
\begin{equation}\label{eq:hc-D2-V-form}
\hc'\bigl(\Tr^q(D^2)\bigr)
=\sum_{a,b=1}^n ab\,p_{a+b-2}\partial_{p_a}\partial_{p_b}
+\hbar\sum_{a=2}^n a
\left(\sum_{u=0}^{a-2}p_u p_{a-2-u}\right)\partial_{p_a}.
\end{equation}

Conjugate \eqref{eq:hc-D2-V-form} by \(\delta\), replacing each
\(\partial_{p_a}\) by
\(\partial_{p_a}-\frac12\partial_{p_a}(\ln\delta^2)\).
A straightforward calculation shows that
\[
\hc\bigl(\Tr^q(D^2)\bigr)
=\sum_{a,b=1}^n ab\,p_{a+b-2}\partial_{p_a}\partial_{p_b}
+\hbar\sum_{a=2}^n a(a-1)p_{a-2}\partial_{p_a},
\]
as asserted, consistently with Proposition~\ref{prop: hc laplacian}.
\end{proof}

\begin{example}[The Laplacian for \(n=2\)]
Let \(n=2\).
The preceding proposition gives
\[
\hc\bigl(\Tr^q(D^2)\bigr)
=
2\partial_{p_1}^2
+4p_1\partial_{p_1}\partial_{p_2}
+4p_2\partial_{p_2}^2
+4\hbar\partial_{p_2}.
\]
\end{example}

Note that power-sum realizations of differential operators are
well established,
see \cite{SerVes2015,Dra2024Gen} for examples.
The new point of Theorem \ref{thm: radial part} is a closed-form,
finite-\(n\), and nonrecursive power-sum expansion for arbitrary \(s\) and \(r\), in which coefficients \(C_{\pi,\alpha}^{s,r}\) are
characterized by combinatorial data such as set partitions and graphs.

More broadly,
these formulas provide a concrete starting point and may reveal a structural connection between the noncommutative geometry on quiver algebras
and the deformed \(W_{1+\infty}\), affine-Yangian, and
Calogero--Moser systems \cite{ArbSch2013,KN2016Qua,SchVas2013,DesHal2012}.

\section{From quantum preprojective to rational Cherednik}\label{sec: comparison}

In this section, we construct a homomorphism from the quantum preprojective algebra of the Jordan quiver to a localized spherical rational Cherednik algebra.
 We then express the resulting map explicitly in power-sum coordinates. Our principal reference for rational Cherednik algebras is Etingof--Ginzburg \cite{EG2002}.

\subsection{Rational Cherednik algebra}

Let \(\fkg\) be a finite-dimensional complex reductive Lie algebra with Cartan subalgebra \(\fkh\), root system \(R\subset\fkh^*\), and Weyl group \(W\). Fix a set of positive roots \(R^+\subset R\), and set
\[
\fkh_{\reg}=\fkh\setminus\bigcup_{\alpha\in R}\ker(\alpha),
\qquad
\delta=\prod_{\alpha\in R^+}\alpha.
\]
Let \(c:R\to\bbC\) be \(W\)-invariant, so that \(c_\alpha=c_{-\alpha}\).

\begin{definition}[Etingof--Ginzburg]\label{def: rca}
The rational Cherednik algebra \(\rmH_{\hbar,c}^{\mathrm{pol}}(\fkh,W)\) associated to the data
\(\fkh, W\) is defined to be the
\(\bbC[\hbar]\)-algebra generated by
\(\fkh\oplus\fkh^*\) and \(W\), subject to the following relations:
\[\begin{array}{ll}
w x w^{-1}=w(x),\qquad w y w^{-1}=w(y),
&w\in W,\ y\in\fkh,\ x\in\fkh^*,\\[1mm]
[x_1,x_2]=[y_1,y_2]=0,
&y_1,y_2\in\fkh,\ x_1,x_2\in\fkh^*,\\[1mm]
\displaystyle
[y,x]=\hbar\langle y,x\rangle-\frac{1}{2}\sum_{\alpha\in R}c_\alpha
\langle y,\alpha\rangle\langle\alpha^\vee,x\rangle s_\alpha,
&y\in\fkh,\ x\in\fkh^*.
	\end{array}
    \]
\end{definition}

For \(\fkg=\gl_n(\bbC)\), we write
\(\rmH_{\hbar,c}^\pol (n)=\rmH_{\hbar,c} ^\pol (\bbC^n,S_n)\). It is generated by
\(\bbC S_n\), commuting elements \(x_1,\dots,x_n\), and commuting elements
\(y_1,\dots,y_n\), with
\[
s_{ij}x_k=x_{s_{ij}(k)}s_{ij},
\qquad
s_{ij}y_k=y_{s_{ij}(k)}s_{ij},
\]
and
\[
\begin{array}{ll}
\relax [x_i,x_j]=[y_i,y_j]=0, &\text{for all }i,j;\\[1mm]
\relax [y_i,x_j]=c\,s_{ij}, &\text{for }i\neq j;\\[1mm]
\relax 
\displaystyle [y_i,x_i]=\hbar-c\sum_{k\neq i}s_{ik}, &\text{for all }i.
\end{array}
\]
Let
\[
\be=|W|^{-1}\sum_{w\in W}w.
\]
The \emph{spherical subalgebra} is \(\be\rmH_{\hbar,c}^\pol(\fkh,W)\be\).

{Since quantization in this work is stated in terms of \(\hbar\)-adic completions, we also consider the \(\hbar\)-adic completion of the rational Cherednik algebra.
Define the {\it formal rational Cherednik algebra} to be its
\(\hbar\)-adic completion,
\[
\rmH_{\hbar,c}(\fkh,W)
:=
\varprojlim_m
\rmH_{\hbar,c}^{\mathrm{pol}}(\fkh,W)/
\hbar^m\rmH_{\hbar,c}^{\mathrm{pol}}(\fkh,W).
\]
Thus \(\rmH_{\hbar,c}(\fkh,W)\) is topologically generated by
\(\fkh\oplus\fkh^*\) and \(W\), and its relation ideal is the closure of the above polynomial relation ideal.
The {\it formal spherical subalgebra} is defined analogously and
denoted by \(\be \rmH_{\hbar,c}(\fkh, W) \be\).}

\subsection{Dunkl embedding}
For \(y\in\fkh\), define the Dunkl operator by
\begin{equation}\label{eq: dunkl}
	D_y=
	\partial_y+\frac12\sum_{\alpha\in R}c_\alpha
	\langle y,\alpha\rangle\,\alpha^{-1}(s_\alpha-1)
	\in\mD_\hbar(\fkh_{\reg})\rtimes W.
\end{equation}

\begin{proposition}\cite[Proposition~4.5]{EG2002}
Let \(\fkg\) be a finite-dimensional complex reductive Lie algebra with Cartan subalgebra \(\fkh\).
Fix the root data \((R\subset\fkh^*, R^+ \subset R, W)\).
The assignments
	\[
	w\tos w,
	\qquad
	x\tos x,
	\qquad
	y\tos D_y
	\]
	{extend to an injective \(\bbC[[\hbar]]\)-algebra homomorphism}
	\[
	\widetilde\Theta_{\hbar,c}:
	\rmH_{\hbar,c}(\fkh,W)
	\into
	\mD_\hbar(\fkh_{\reg})\rtimes W.
	\]
\end{proposition}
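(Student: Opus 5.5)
The plan is to first prove the statement for the polynomial algebra \(\rmH_{\hbar,c}^{\pol}(\fkh,W)\) with values in \(\mD_\hbar^{\pol}(\fkh_{\reg})\rtimes W\), and then pass to \(\hbar\)-adic completions. By the universal property of the presentation in Definition \ref{def: rca}, it suffices to check that the elements \(w\), \(x\), \(D_y\) satisfy the defining relations. The relations \([x_1,x_2]=0\) and \(wxw^{-1}=w(x)\) hold trivially in the smash product. For \(wD_yw^{-1}=D_{w(y)}\), I will conjugate formula \eqref{eq: dunkl}: the term \(\partial_y\) goes to \(\partial_{w(y)}\), and the sum over \(R\) is permuted by \(w\), using \(ws_\alpha w^{-1}=s_{w\alpha}\) and the \(W\)-invariance of \(c\).

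For the mixed relation \([D_y,x]\), I will compute \([\partial_y,x]=\hbar\langle y,x\rangle\). I will then compute \([\alpha^{-1}(s_\alpha-1),x]=\alpha^{-1}(s_\alpha(x)-x)s_\alpha=-\langle\alpha^\vee,x\rangle s_\alpha\), using \(s_\alpha(x)=x-\langle\alpha^\vee,x\rangle\alpha\). Together these give exactly the stated relation.

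The main obstacle is commutativity \([D_{y_1},D_{y_2}]=0\). Here I will follow the standard Dunkl argument. The terms of order zero in \(\hbar\) cancel by a rank-two reduction: the sum over pairs of roots splits according to the rank-two subsystems they span, and within each such subsystem the cross terms cancel. Alternatively, it suffices to show that \([D_{y_1},D_{y_2}]\) commutes with every \(x\in\fkh^*\) and annihilates \(1\); then, by the Jacobi identity and the mixed relation above, it vanishes on \(\bbC[\fkh]\), and a faithfulness argument for \(\mD_\hbar(\fkh_{\reg})\rtimes W\) acting on \(\bbC[\fkh_{\reg}][[\hbar]]\) concludes. The commutation with \(x\) reduces, via Jacobi, to the expression \([D_{y_1},c\text{-terms}\cdot s_\alpha]\) antisymmetrised in \((y_1,y_2)\), which cancels. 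I would cite \cite[Proposition~4.5]{EG2002} for this identity if needed.

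Injectivity will follow from the PBW theorem for \(\rmH_{\hbar,c}\): the elements \(x^a w y^b\) span. Their images under the map have leading symbols \(x^a w\,\partial^b\) with respect to the order filtration, and these are linearly independent in \(\mD_\hbar(\fkh_{\reg})\rtimes W\). Hence a nonzero combination cannot map to zero. Finally, both sides are \(\hbar\)-torsion-free and the map is \(\bbC[\hbar]\)-linear and continuous, so it extends to the completions. Injectivity survives completion because, by the PBW basis, the associated graded for the \(\hbar\)-adic filtration is preserved: the image of \(\hbar^m\rmH\) is contained in \(\hbar^m\) times the target, and the map remains injective modulo \(\hbar\), since the specialization at \(\hbar=0\) is the classical Dunkl embedding.
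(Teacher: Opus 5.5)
Your proposal is correct. The paper gives no argument of its own here; it only cites \cite[Proposition~4.5]{EG2002}. What you reconstruct is the standard proof behind that citation: direct verification of the relations, commutativity of the Dunkl operators, and injectivity from the easy spanning half of PBW together with linear independence of the leading symbols \(x^a w\,\partial^b\).

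Your version adds the passage to \(\hbar\)-adic completions. The cited polynomial statement does not literally cover this, and the paper leaves it implicit. Your ingredients suffice once the key consequence is stated. Write \(\Theta\) for the polynomial-level map and \(T\) for \(\mD^{\pol}_\hbar(\fkh_{\reg})\rtimes W\). Since \(T\) is \(\hbar\)-torsion-free and \(\Theta\) is injective modulo \(\hbar\), an induction gives \(\Theta^{-1}(\hbar^m T)=\hbar^m\rmH^{\pol}_{\hbar,c}(\fkh,W)\). Injectivity modulo \(\hbar\) follows from the same leading-term argument, now in \(y\)-degree, applied to the classical Dunkl map. Hence the map is injective modulo every \(\hbar^m\), and so on the inverse limit.

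Two places deserve tightening.

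\begin{enumerate}
\item The ``rank-two'' sentence is too loose to count as a proof, but your Jacobi route works once you record one identity. From \(s_\alpha D_y s_\alpha=D_{s_\alpha(y)}\) one gets \([D_{y_1},s_\alpha]=\langle y_1,\alpha\rangle D_{\alpha^\vee}s_\alpha\). This makes \([D_{y_1},[D_{y_2},x]]=-\tfrac12\sum_{\alpha\in R}c_\alpha\langle y_1,\alpha\rangle\langle y_2,\alpha\rangle\langle\alpha^\vee,x\rangle D_{\alpha^\vee}s_\alpha\) visibly symmetric in \(y_1,y_2\).
\item You can avoid a faithfulness argument on \(\bbC[\fkh_{\reg}][[\hbar]]\). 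Write \(P=[D_{y_1},D_{y_2}]=\sum_w P_w w\) and suppose \(P\) commutes with every \(x\in\fkh^*\). For \(w\neq1\), comparing top-order symbols in \([P_w,w(x)]=(x-w(x))P_w\) forces \(P_w=0\). Torsion-freeness forces \(P_1\) to be a function. So \(P=P\cdot 1=0\).
\end{enumerate}
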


Let \(\rmH_{\hbar,c}^{\mathrm{pol}}(\fkh,W)\) be the polynomial
\(\bbC[\hbar]\)-algebra from Definition~\ref{def: rca}.  First form its
 Ore localization at the powers of \(\delta\), and then define
\[
\rmH_{\hbar,c}(\fkh,W)^\loc
:=
\varprojlim_m
\frac{
\rmH_{\hbar,c}^{\mathrm{pol}}(\fkh,W)[\delta^{-1}]
}{
\hbar^m\rmH_{\hbar,c}^{\mathrm{pol}}(\fkh,W)[\delta^{-1}]
}.
\]

At the uncompleted \(\bbC[\hbar]\)-level, the localized Dunkl embedding is
already an isomorphism.
Its inverse is determined on generators by
\[
w\longmapsto w,
\qquad
x\longmapsto x,
\qquad
\partial_y\longmapsto
y-\frac12\sum_{\alpha\in R}c_\alpha
\langle y,\alpha\rangle\alpha^{-1}(s_\alpha-1).
\]
Indeed, these formulas are mutually inverse on the localized generators.
Thus the localized map and its spherical restriction are isomorphisms
over \(\bbC[\hbar]\).  Taking \(\hbar\)-adic completions gives the
following proposition.

\begin{proposition}[Localized Dunkl isomorphism]
\label{prop: dunkl embed loc neq 0}
Let \(\fkg\) be a finite-dimensional complex reductive Lie algebra with Cartan subalgebra \(\fkh\).
	Fix the root data \((R\subset\fkh^*, R^+ \subset R, W)\).
	The Dunkl embedding and its spherical restriction extend to
	\(\bbC[[\hbar]]\)-algebra isomorphisms
	\[
	\widetilde\Theta_{\hbar,c}^{\loc}:
	\rmH_{\hbar,c}(\fkh,W)^\loc
	\xrightarrow{\sim}
	\mD_\hbar(\fkh_{\reg})\rtimes W
	\]
	and
	\[
	\widetilde\Theta_{\hbar,c}^{\sph}:
	\be\rmH_{\hbar,c}(\fkh,W)^\loc\be
	\xrightarrow{\sim}
	\mD_\hbar(\fkh_{\reg})^W.
	\]
\end{proposition}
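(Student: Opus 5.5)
We will first show that the localized Dunkl map is an isomorphism at the polynomial level over \(\bbC[\hbar]\), and then pass to \(\hbar\)-adic completions. The first step is to check that \(\widetilde\Theta_{\hbar,c}\) extends to the Ore localization \(\rmH^{\mathrm{pol}}_{\hbar,c}(\fkh,W)[\delta^{-1}]\). This holds because \(\widetilde\Theta_{\hbar,c}(\delta)=\delta\) is invertible in \(\mD^{\mathrm{pol}}_\hbar(\fkh_{\reg})\rtimes W\). Since \(\delta\) is \(W\)-semi-invariant, the powers of \(\delta\) satisfy the Ore condition in the Cherednik algebra. To see this, note that \([D_y,\delta]\) and \([y,\delta]\) lie in \(\bbC[\fkh]\rtimes W\) with coefficients of lower degree, so \(\operatorname{ad}\delta\) is locally nilpotent.

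Next we define the candidate inverse \(\Psi\) on generators, as in the text: \(w\mapsto w\), \(x\mapsto x\), \(\delta^{-1}\mapsto\delta^{-1}\), and
\[
\partial_y\mapsto y-\tfrac12\sum_{\alpha\in R}c_\alpha\langle y,\alpha\rangle\alpha^{-1}(s_\alpha-1).
\]
The algebra \(\mD^{\mathrm{pol}}_\hbar(\fkh_{\reg})\rtimes W\) is presented by \(\bbC[\fkh_{\reg}]\), the commuting \(\partial_y\), \(W\), the Weyl relations \([\partial_y,f]=\hbar\,\partial_y(f)\), and the twisted group relations. So we must verify that the images satisfy these relations.

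\(W\)-equivariance is clear, since the correction term is built from a \(W\)-invariant sum.

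The Weyl relation \([\Psi(\partial_y),x]=\hbar\langle y,x\rangle\) follows from the Cherednik relation \([y,x]\) together with the identity
\[
[\alpha^{-1}(s_\alpha-1),x]=\alpha^{-1}(s_\alpha(x)-x)s_\alpha=-\langle\alpha^\vee,x\rangle s_\alpha.
\]
The Leibniz rule then extends this to all \(f\in\bbC[\fkh_{\reg}]\).

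The main obstacle is the commutativity \([\Psi(\partial_{y_1}),\Psi(\partial_{y_2})]=0\). We will not expand this directly. Instead, write \(\Psi(\partial_y)=y-c_y\), where \(c_y\) denotes the correction term, and observe that \(\widetilde\Theta(y-c_y)=D_y-c_y=\partial_y\). This holds because \(\widetilde\Theta\) fixes \(\bbC[\fkh_{\reg}]\rtimes W\) in the localized sense, and \(D_y=\partial_y+c_y\) by \eqref{eq: dunkl}.

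Hence \(\widetilde\Theta\circ\Psi\) is the identity on generators, and thus also on the free algebra mapping to both sides. The commutator \([y_1-c_{y_1},y_2-c_{y_2}]\) is mapped by \(\widetilde\Theta\) to \([\partial_{y_1},\partial_{y_2}]=0\). Injectivity of the localized \(\widetilde\Theta\) then gives the relation. This injectivity follows from \cite[Proposition~4.5]{EG2002} together with the fact that Ore localization preserves injectivity, since \(\delta\) is a regular element.

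With all relations verified, \(\Psi\) is a well-defined homomorphism. The identities \(\widetilde\Theta\circ\Psi=\id\) and \(\Psi\circ\widetilde\Theta=\id\) can then be checked on generators. For the second, \(\Psi(D_y)=\Psi(\partial_y)+c_y=y\). So the localized map is an isomorphism.

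For the spherical statement, we conjugate by \(\be\). The map \(\widetilde\Theta^{\loc}\) sends \(\be\) to \(\be\), so it restricts to an isomorphism
\[
\be\rmH^{\loc}\be\cong\be(\mD(\fkh_{\reg})\rtimes W)\be.
\]
The right-hand side is identified with \(\mD(\fkh_{\reg})^W\) via \(P\be\leftrightarrow P\) for invariant \(P\); this uses that \(W\) acts freely on \(\fkh_{\reg}\).

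Finally, both maps preserve the ideals \(\hbar^m(\cdot)\). So they induce mutually inverse isomorphisms modulo \(\hbar^m\) for each \(m\), and these pass to the inverse limits. This yields the completed isomorphisms \(\widetilde\Theta^{\loc}_{\hbar,c}\) and \(\widetilde\Theta^{\sph}_{\hbar,c}\).
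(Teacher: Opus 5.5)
Your argument follows the paper's route: at the \(\bbC[\hbar]\)-level you invert the localized Dunkl map by the explicit assignment on generators, check both composites on generators, restrict to \(\be(-)\be\), and complete \(\hbar\)-adically. You go further than the paper, which only asserts that "these formulas are mutually inverse." Your use of the injectivity of \(\widetilde\Theta^{\loc}\) to show that \(\Psi\) respects the relations is a clean way to fill that in. In fact it handles all relations at once, since \(\widetilde\Theta^{\loc}\circ\Psi\) agrees with the quotient map on the free algebra. So your separate \(W\)-equivariance and Weyl-relation checks are correct but not needed.

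One step is wrong as stated: \(\operatorname{ad}\delta\) is not locally nilpotent on \(\rmH^{\mathrm{pol}}_{\hbar,c}(\fkh,W)\). Since \(s_\alpha(\delta)=-\delta\), one has \([\delta,s_\alpha]=2\delta s_\alpha\), hence \((\operatorname{ad}\delta)^k(s_\alpha)=2^k\delta^k s_\alpha\neq0\) for every \(k\).

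The Ore property itself, which the paper takes for granted, is still true; argue with \(\delta^2\in\bbC[\fkh]^W\) instead. It commutes with \(\bbC[\fkh]\rtimes W\). The Cherednik relations give \([y,\delta^2]=\hbar\cdot(\text{derivative of }\delta^2\text{ along }y)\in\bbC[\fkh]\), with the reflection terms vanishing because \(s_\alpha(\delta^2)=\delta^2\). So \(\operatorname{ad}(\delta^2)\) strictly lowers the \(y\)-degree in the PBW filtration and is locally nilpotent. Hence the powers of \(\delta^2\) form an Ore set, and localizing at \(\delta^2\) gives the same ring as localizing at \(\delta\).

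A minor point: the identification \(\be(A\rtimes W)\be\cong A^W\), \(P\be\leftrightarrow P\), holds for any algebra \(A\) with a \(W\)-action in characteristic zero. Freeness of the action on \(\fkh_{\reg}\) plays no role at that step.
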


The construction below uses \((\widetilde\Theta_{\hbar,c}^{\sph})^{-1}\) to identify invariant differential operators on \(\fkh_{\reg}\) with elements of the localized spherical Cherednik algebra.

At \(\hbar=0\), the Dunkl embedding specializes to
\[
\widetilde\Theta_{0,c}:
\rmH_{0,c}(\fkh,W)^\loc
\longrightarrow
\bbC[\fkh_{\reg}\times\fkh^*]\rtimes W,
\]
with
\[
x\tos x,
\qquad
y\tos D_y^0
=y+\frac12\sum_{\alpha\in R}c_\alpha
\langle y,\alpha\rangle\,\alpha^{-1}(s_\alpha-1),
\qquad
w\tos w.
\]
The following classical form is the specialization of the Dunkl
isomorphism.
\begin{proposition}[{\cite[Proposition~7.9(ii)]{Eti2009Lec}}]
\label{prop: dunkl embed loc 0}
	For a finite-dimensional complex reductive Lie algebra \(\fkg\) with Cartan subalgebra \(\fkh\) and Weyl group \(W\),
	\[
	\rmH^{\loc}_{0,c}(\fkh,W)
	\cong
	\bbC[\fkh_{\reg}\times\fkh^*]\rtimes W,
	\qquad
	\be\rmH^{\loc}_{0,c}(\fkh,W)\be
	\cong
	\bbC[\fkh_{\reg}\times\fkh^*]^W.
	\]
\end{proposition}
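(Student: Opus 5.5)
The plan is to prove the classical statement by hand, using the same generator-level inverse that was used above for the localized Dunkl isomorphism at the \(\bbC[\hbar]\)-level, specialized to \(\hbar=0\). The first step is to check that \(\widetilde\Theta_{0,c}\) is well defined on \(\rmH_{0,c}(\fkh,W)\). The \(W\)-equivariance relations are clear, since the assignment \(y\mapsto D^0_y\) is \(W\)-equivariant (\(wD^0_yw^{-1}=D^0_{w(y)}\)) and \(x\mapsto x\) is as well. The relation
\[
[D^0_y,x]=-\tfrac12\sum_{\alpha}c_\alpha\langle y,\alpha\rangle\langle\alpha^\vee,x\rangle s_\alpha
\]
follows from \(\alpha^{-1}(s_\alpha x-x s_\alpha)=\alpha^{-1}(s_\alpha(x)-x)s_\alpha=-\langle\alpha^\vee,x\rangle s_\alpha\), since the \(\hbar\langle y,x\rangle\) term is absent at \(\hbar=0\). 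The remaining relation \([D^0_{y_1},D^0_{y_2}]=0\) is the standard commutativity of (classical) Dunkl operators. I would either take it from \cite{EG2002} or obtain it as the \(\hbar=0\) reduction of the \(\hbar\)-level commutativity, which is legitimate because the \(\hbar\)-level Dunkl map is already known to be a homomorphism.

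The second step is to pass to the localization. The element \(\delta^2\) is \(W\)-invariant. Moreover, \(\operatorname{ad}\delta^2\) is locally nilpotent on \(\rmH_{0,c}\), because \([y,f]\) lowers the polynomial degree of \(f\in\bbC[\fkh]\). Hence the powers of \(\delta\) form an Ore set, and the Dunkl map extends to \(\rmH_{0,c}^{\loc}\), since \(\delta\) maps to an invertible function on \(\fkh_{\reg}\).

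The third step is to define the inverse \(\Psi\) on generators by \(w\mapsto w\), \(x\mapsto x\), \(f\mapsto f\) for \(f\in\bbC[\fkh_{\reg}]\), and
\[
y\mapsto y-\tfrac12\sum_\alpha c_\alpha\langle y,\alpha\rangle\alpha^{-1}(s_\alpha-1)
\]
for \(y\in\fkh\), with the target the localized algebra. Well-definedness means checking that the images of the \(y\) commute with one another and with functions. For this I would first argue bijectivity by a filtration argument and then deduce that \(\Psi\) is the inverse automatically.

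Filter both sides by degree in \(\fkh\), that is, by order in the \(y\)'s. Since \(D^0_y=y+(\text{degree }0)\), the associated graded map is the identity on \(\bbC[\fkh_{\reg}]\otimes\Sym(\fkh)\otimes\bbC W\). The source side of this uses the PBW theorem for \(\rmH_{0,c}\) (from \cite{EG2002}), which survives Ore localization at \(\delta\). Hence \(\widetilde\Theta_{0,c}\) is bijective, which gives the first isomorphism.

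For the spherical statement, I would apply \(\be(-)\be\) to both sides. For a commutative algebra \(A\) with \(W\)-action, one has \(\be(A\rtimes W)\be=A^W\be\cong A^W\) via \(a\be\mapsto a\). Taking \(A=\bbC[\fkh_{\reg}\times\fkh^*]\) then gives the second isomorphism.

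The main obstacle I expect is the PBW and localization bookkeeping. Concretely, I need the filtration to be compatible with the Ore localization and the associated graded of \(\rmH^{\loc}_{0,c}\) to be exactly \(\bbC[\fkh_{\reg}\times\fkh^*]\rtimes W\). I would handle this by observing that localizing at \(\delta\), which has filtration degree \(0\), commutes with taking associated graded.
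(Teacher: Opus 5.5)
Your proof is correct, but it takes a different route from the paper. The paper gives no independent proof of this statement. It cites Etingof and presents the result as the \(\hbar=0\) specialization of Proposition~\ref{prop: dunkl embed loc neq 0}, whose inverse was written on generators at the \(\bbC[\hbar]\)-level. Since \(\hbar\) is central and Ore localization is exact, reducing that isomorphism modulo \(\hbar\) gives \(\rmH^{\loc}_{0,c}\cong\bbC[\fkh_{\reg}\times\fkh^*]\rtimes W\). The spherical part then follows from the same observation \(\be(A\rtimes W)\be\cong A^W\) that you use.

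You instead stay at \(\hbar=0\) throughout: you check the relations for the classical Dunkl operators, localize, and get bijectivity from the \(y\)-degree filtration, on which \(\gr\widetilde\Theta_{0,c}\) is the identity. Your version is self-contained and makes injectivity transparent, at the cost of a PBW input. The paper's version is shorter, and it produces the classical isomorphism as the literal reduction of the quantum one, which is exactly what Corollary~\ref{cor: preproj vs rca} needs for its specialization compatibility.

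Two simplifications are available to you:
\begin{enumerate}
\item At \(\hbar=0\) one has \([y,f]=0\) for every \(W\)-invariant \(f\). So \(\delta^2\) is central in \(\rmH_{0,c}\) and the Ore condition is immediate. Note that your local-nilpotence argument really concerns \(\delta^2\), not \(\delta\): for \(\delta\) it fails, since \([\delta,s_\alpha]=2\delta s_\alpha\). Localizing at either element gives the same ring.
\item Only the easy spanning half of PBW is needed. The map \(\gr\widetilde\Theta_{0,c}\) carries the spanning family indexed by \(\bbC[\fkh_{\reg}]\otimes\Sym(\fkh)\otimes\bbC W\) onto a basis of the target, and this forces that family to be linearly independent.
\end{enumerate}
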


\subsection{From quiver to rational Cherednik algebras}

By Proposition \ref{prop: dunkl embed loc neq 0}, the Dunkl isomorphism identifies the localized rational Cherednik algebra with the crossed product of differential operators on \(\fkh_{\reg}\) by \(W\). Its spherical restriction identifies the localized spherical subalgebra with \(\mD_\hbar(\fkh_{\reg})^W\).

From this point onward, we specialize to
	\[
	\fkg=\gl_n(\bbC),
	\qquad
	\fkh=\bbC^n,
	\qquad
	W=S_n.
	\]
The inverse of the  Dunkl isomorphism
\[
(\widetilde{\Theta}^{\loc}_{\hbar,c})^{-1}:
\mD_\hbar(\fkh_{\reg})\rtimes W
\xrightarrow{\sim}
\rmH_{\hbar,c}(n)^\loc
\]
is given on generators by
\begin{equation}\label{for: inverse dunkl in x}
	w\in W\tos w,
	\qquad
	x\in\fkh^*\tos x,
	\qquad
	\partial_y\tos
	y-\frac12\sum_{\alpha\in R}c_\alpha
	\langle y,\alpha\rangle\,\alpha^{-1}(s_\alpha-1).
\end{equation}

We now construct the desired morphism from the quantum preprojective algebra to the localized spherical rational Cherednik algebra. Let
\[
\iota:
\mD_\hbar(\fkh)^{S_n}
\hookrightarrow
\mD_\hbar(\fkh_{\reg})^{S_n}
\]
be the localization map induced by the open embedding
\(\fkh_{\reg}\hookrightarrow\fkh\).

\begin{proposition}
	\label{prop: quant preproj to RCA}
	Let \(Q\) be the Jordan quiver and let \(n\) be a positive integer. Then, for every \(c\in\bbC\), there is a \(\bbC[[\hbar]]\)-algebra morphism
	\[
	\Upsilon_{\hbar,c}:
	\Pi Q_{\hbar}
	\longrightarrow
	\be\rmH_{\hbar,c}(n)^\loc\be,
	\]
	which factors through the radial quantum trace map \(\widetilde{\Tr^q}\).
	In particular, when \(c=0\), the image of \(\Upsilon_{\hbar,0}\) lies in \(\be\rmH_{\hbar,0}(n)\be\).
\end{proposition}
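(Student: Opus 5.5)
The plan is to define \(\Upsilon_{\hbar,c}\) as a composite of maps already available in the paper:
\[
\Upsilon_{\hbar,c}
:=
(\widetilde\Theta^{\sph}_{\hbar,c})^{-1}\circ\iota\circ\widetilde{\Tr^q}:
\Pi Q_\hbar
\longrightarrow
\mD_\hbar(\fkh)^{S_n}
\longrightarrow
\mD_\hbar(\fkh_{\reg})^{S_n}
\longrightarrow
\be\rmH_{\hbar,c}(n)^\loc\be .
\]
I would check each factor in turn.

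\textbf{First factor.} \(\widetilde{\Tr^q}\) is a continuous \(\bbC[[\hbar]]\)-algebra homomorphism by Section~\ref{subsec: tilde Tr}. It is the composite of the descended quantum trace \eqref{eq: qTr descends}, passed to \(\hbar\)-adic completions, with the completed Harish--Chandra map of Theorem~\ref{thm: hc regular}.

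\textbf{Second factor.} The localization \(\iota\) is an injective algebra homomorphism. It is induced by the open embedding \(\fkh_{\reg}\into\fkh\), and it commutes with the \(S_n\)-action and with \(\hbar\)-adic completion, since both sides are completions of torsion-free \(\bbC[\hbar]\)-algebras and \(\iota\) preserves powers of \(\hbar\).

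\textbf{Third factor.} \((\widetilde\Theta^{\sph}_{\hbar,c})^{-1}\) is a \(\bbC[[\hbar]]\)-algebra isomorphism by Proposition~\ref{prop: dunkl embed loc neq 0}. The composite is therefore a \(\bbC[[\hbar]]\)-algebra morphism, and by construction it factors through \(\widetilde{\Tr^q}\), hence through \(\Tr^q\).

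For \(c=0\), the defining relations in Definition~\ref{def: rca} reduce to \([y_i,x_j]=\hbar\delta_{ij}\). So \(\rmH_{\hbar,0}(n)\) is \(\mD_\hbar(\fkh)\rtimes S_n\) (completed), and the Dunkl operators \(D_y\) reduce to \(\partial_y\). Formula \eqref{for: inverse dunkl in x} becomes \(x\tos x\), \(\partial_y\tos y\), \(w\tos w\), with no \(\alpha^{-1}\) terms. Hence the inverse Dunkl map sends \(\mD_\hbar(\fkh)\rtimes S_n\) (after completion) into the nonlocalized \(\rmH_{\hbar,0}(n)\). On the spherical part it sends \(\mD_\hbar(\fkh)^{S_n}\) into \(\be\rmH_{\hbar,0}(n)\be\), because \(P\in\mD_\hbar(\fkh)^{S_n}\) satisfies \(P=\be P\be\) in the crossed product. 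Since \(\widetilde{\Tr^q}\) already lands in \(\mD_\hbar(\fkh)^{S_n}\) before localization, the image of \(\Upsilon_{\hbar,0}\) lies in \(\be\rmH_{\hbar,0}(n)\be\).

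The main obstacle is bookkeeping rather than substance. It consists of two checks:
\begin{enumerate}
\item the natural map \(\be\rmH_{\hbar,0}(n)\be\to\be\rmH_{\hbar,0}(n)^\loc\be\) is injective, so that ``lies in'' makes sense;
\item the spherical restriction of the inverse Dunkl isomorphism is really \(P\tos\be P\be\) on invariant operators.
\end{enumerate}
For the first, I would use the PBW theorem for \(\rmH_{\hbar,c}^\pol\), which shows it is \(\delta\)-torsion-free. I would then pass to \(\hbar\)-adic completions using flatness, exactly as in the construction of Proposition~\ref{prop: dunkl embed loc neq 0}.
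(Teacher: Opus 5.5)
Your proposal is correct and follows the paper's proof. The paper defines \(\Upsilon_{\hbar,c}\) as exactly the composite \((\widetilde\Theta^{\sph}_{\hbar,c})^{-1}\circ\iota\circ\widetilde{\Tr^q}\), and it handles \(c=0\) by noting that the inverse Dunkl formula then has no root denominators. Your extra checks (injectivity of \(\be\rmH_{\hbar,0}(n)\be\to\be\rmH_{\hbar,0}(n)^\loc\be\), and the identification \(P\mapsto\be P\be\) on invariant operators) are not spelled out in the paper, but they make the same argument more careful rather than changing it.
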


\begin{proof}
	Define \(\Upsilon_{\hbar,c}\) to be the composite
	\[
	\Pi Q_{\hbar}
	\xrightarrow{\widetilde{\Tr^q}}
	\mD_\hbar(\fkh)^{S_n}
	\xrightarrow{\iota}
	\mD_\hbar(\fkh_{\reg})^{S_n}
	\xrightarrow{(\widetilde{\Theta}_{\hbar,c}^{\sph})^{-1}}
	\be\rmH_{\hbar,c}(n)^\loc\be.
	\]
	The radial quantum trace, localization, and inverse  spherical Dunkl isomorphism are \(\bbC[[\hbar]]\)-algebra morphisms. Their composite is therefore a \(\bbC[[\hbar]]\)-algebra morphism, and the asserted factorization is built into its definition.
	
	When \(c=0\), the formula \eqref{for: inverse dunkl in x} reduces to
	\[
	w\tos w,
	\qquad
	x\tos x,
	\qquad
	\partial_y\tos y.
	\]
	Thus the inverse spherical Dunkl isomorphism introduces no root denominators when \(c=0\). Since \(\widetilde{\Tr^q}\) lands in the nonlocalized algebra \(\mD_\hbar(\fkh)^{S_n}\), the image of \(\Upsilon_{\hbar,0}\) lies in \(\be\rmH_{\hbar,0}(n)\be\).
\end{proof}

We next pass to the semiclassical specialization.

\begin{corollary}\label{cor: preproj vs rca}
	Let \(Q\) be the Jordan quiver and let \(n\) be a positive integer. Then, for every \(c\in\bbC\), there is a Poisson algebra morphism
	\[
	\Upsilon_{0,c}:
	\Sym((\Pi Q)_\cyc)
	\longrightarrow
	\be\rmH_{0,c}(n)^\loc\be,
	\]
	which factors through the classical trace map on the preprojective algebra.
	In particular, when \(c=0\), the image of \(\Upsilon_{0,0}\) lies in {\(\be\rmH_{0,0}(n)\be\)}.
\end{corollary}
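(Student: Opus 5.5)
I would obtain \(\Upsilon_{0,c}\) as the reduction modulo \(\hbar\) of the morphism \(\Upsilon_{\hbar,c}\) of Proposition \ref{prop: quant preproj to RCA}. Equivalently, I would define it directly as the composite
\[
\Sym((\Pi Q)_\cyc)\xrightarrow{\ \tTr\ }\bbC[\rmT^\ast\fkh]^{S_n}\xrightarrow{\ \iota_0\ }\bbC[\fkh_{\reg}\times\fkh^\ast]^{S_n}\xrightarrow{\ \sim\ }\be\rmH_{0,c}(n)^\loc\be .
\]
Here \(\iota_0\) is restriction to the open set \(\fkh_{\reg}\times\fkh^\ast\), and the last arrow is the inverse of the classical spherical Dunkl isomorphism of Proposition \ref{prop: dunkl embed loc 0}. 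Then I would check that this composite agrees with \(\Upsilon_{\hbar,c}\bmod\hbar\).

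\textbf{Step 1: specialize each factor.} The three factors of \(\Upsilon_{\hbar,c}\) reduce modulo \(\hbar\) as follows.
\begin{enumerate}
\item By Proposition \ref{prop: tilt Tr semiclassical}, \(\widetilde{\Tr^q}\bmod\hbar=\tTr\) is a Poisson algebra morphism. By construction (the semiclassical part of the proof of that proposition) it is the classical trace \(\Tr\) followed by Chevalley-type restriction to \(\rmT^\ast\fkh\). This gives the asserted factorization through \(\Tr\).
\item Localization \(\iota\) reduces modulo \(\hbar\) to \(\iota_0\). This holds because \(\mD_\hbar(\fkh_{\reg})^{S_n}/\hbar\cong\bbC[\fkh_{\reg}\times\fkh^\ast]^{S_n}\), using exactness of \(S_n\)-invariants via the Reynolds operator, exactly as in Proposition \ref{prop: tilt Tr semiclassical}. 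The map \(\iota_0\) is an injective Poisson morphism.
\item The inverse spherical Dunkl map reduces to the inverse of \(\widetilde\Theta_{0,c}\) restricted to spherical parts. To see this, note that at the polynomial level \(\be\rmH^\pol_{\hbar,c}(n)[\delta^{-1}]\be\) is flat over \(\bbC[\hbar]\), being isomorphic to \(\mD^\pol_\hbar(\fkh_{\reg})^{S_n}\). Its fibre at \(\hbar=0\) is \(\be\rmH_{0,c}(n)^\loc\be\), and the generator formulas \eqref{for: inverse dunkl in x} specialize to \(y\mapsto D_y^0\) inverted.
\end{enumerate}

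\textbf{Step 2: the Poisson property.} On both \(\be\rmH_{0,c}(n)^\loc\be\) and \(\bbC[\fkh_{\reg}\times\fkh^\ast]^{S_n}\), the bracket is \(-\hbar^{-1}[\,\cdot,\cdot\,]\bmod\hbar\), taken from the respective \(\hbar\)-families. An algebra isomorphism of flat \(\hbar\)-families preserves commutators, so its reduction is Poisson, by the same argument as in Proposition \ref{prop: tilt Tr semiclassical}. Composing Poisson morphisms gives that \(\Upsilon_{0,c}\) is Poisson.

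\textbf{Step 3: the case \(c=0\).} Here \(D_y^0=y\), so the inverse Dunkl map sends \(x\mapsto x\) and \(y\mapsto y\) and introduces no \(\delta^{-1}\). Since \(\tTr\) lands in \(\bbC[\rmT^\ast\fkh]^{S_n}\), which is generated by the polarized power sums \(\sum_i x_i^ay_i^b\), the image lies in \(\be\rmH_{0,0}(n)\be\cong\bbC[\fkh\oplus\fkh^\ast]^{S_n}\). Alternatively, one can reduce the last claim of Proposition \ref{prop: quant preproj to RCA} modulo \(\hbar\).

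\textbf{Main obstacle.} The step needing care is Step 1(3): that the reduction modulo \(\hbar\) of the \emph{completed localized spherical} algebra is the classical localized spherical algebra, so that the specialization of \(\Upsilon_{\hbar,c}\) really has the stated target. I would handle it through flatness at the polynomial level, transported from \(\mD^\pol_\hbar(\fkh_{\reg})^{S_n}\) by the Dunkl isomorphism, together with the standard fact that completion commutes with reduction modulo \(\hbar\).
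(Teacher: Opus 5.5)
Your proposal is correct and follows essentially the same route as the paper. Like the paper, you define \(\Upsilon_{0,c}\) as the composite \((\widetilde\Theta^{\sph}_{0,c})^{-1}\circ\iota_0\circ\tTr\), get the Poisson property because the Dunkl isomorphism between flat \(\hbar\)-families specializes to a Poisson map, check that each factor of \(\Upsilon_{\hbar,c}\) reduces to the corresponding classical factor, and handle \(c=0\) by noting that no root denominators appear; your polynomial-level flatness argument for Step 1(3) only spells out a point the paper asserts briefly.
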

\begin{proof}
Define the map by the composite
\[
\Sym((\Pi Q)_\cyc)
\xrightarrow{\widetilde{\Tr}}
\bbC[T^*\fkh]^{S_n}
\xrightarrow{\iota_0}
\bbC[\fkh_{\reg}\times\fkh^*]^{S_n}
\xrightarrow{(\widetilde\Theta_{0,c}^{\sph})^{-1}}
\be\rmH_{0,c}(n)^\loc\be,
\]
where \(\iota_0\) is localization. The classical trace and localization
are Poisson. The last map is Poisson because it is the specialization
of the inverse spherical Dunkl isomorphism between flat formal
quantizations, with the same convention
\(\{-,-\}=-\hbar^{-1}[-,-]\bmod\hbar\). For \(c=0\), the inverse
Dunkl formula introduces no root denominators, so the image is
nonlocalized.

Each factor defining
\(\Upsilon_{\hbar,c}
=
(\widetilde\Theta_{\hbar,c}^{\mathrm{sph}})^{-1}
\circ\iota
\circ\widetilde{\operatorname{Tr}^{q}}\)
then specializes to the
corresponding factor of the displayed classical composite,
which proves the claimed specialization compatibility.
\end{proof}

\subsection{Computation of
\texorpdfstring{\(\Upsilon_{\hbar,c}\)}{the Cherednik morphism}}
\label{subsec:inverse-spherical-dunkl-power-sum}

As shown in Proposition \ref{prop: quant preproj to RCA}, the morphism \(\Upsilon_{\hbar,c}\) is obtained from the radial quantum trace by applying the inverse spherical Dunkl isomorphism.
The preceding section expresses \(\widetilde{\Tr^q}\) in power-sum coordinates. We now compute the inverse spherical Dunkl isomorphism in the same coordinate system.

Let
\[
G(p)=\bigl(p_{a+b-2}\bigr)_{1\leq a,b\leq n},
\qquad
V=\bigl(x_i^{a-1}\bigr)_{1\leq a,i\leq n}.
\]
Then
\[
G(p)=VV^{\mathsf T},
\qquad
\det G(p)=(\det V)^2=\delta^2.
\]
Consequently, \(G(p)\) is invertible on \(\fkh_{\reg}\). 
Moreover,
\[
	\bbC[\fkh_{\reg}]^{S_n}
	\cong
	\bbC[p_1,\ldots,p_n,\delta^{-2}],
\]
and each entry of \(G(p)^{-1}\) belongs to this ring.

For \(1\leq a\leq n\), set
\begin{equation}\label{eq:power-sum-Cherednik-derivative}
	\Xi_{p_a}
	:=
	\frac1a\sum_{b=1}^n
	\bigl(G(p)^{-1}\bigr)_{a, b}\,
	\be\left(\sum_{i=1}^n x_i^{b-1}y_i\right)\be
	\ \in\
	\be\rmH_{\hbar,c}(n)^\loc\be.
\end{equation}

\begin{proposition}\label{prop: inverse dunkl power sum}
	Let \(\fkh \subset \gl_n (\bbC)\) be the Cartan subalgebra of diagonal matrices.
	Fix the root data \((R, R^+, W)\).
	The assignment
	\begin{equation}\label{for: inverse dunkl power sum}
		p_a\mapsto p_a\be,
		\qquad
		\delta^{-2}\mapsto\delta^{-2}\be,
		\qquad
		\partial_{p_a}\mapsto\Xi_{p_a},
		\qquad 1\leq a\leq n,
	\end{equation}
	extends uniquely to a \(\bbC[[\hbar]]\)-algebra morphism
	\[
	\Phi_{\hbar,c}:
	\mD_\hbar(\fkh_{\reg})^{S_n}
	\longrightarrow
	\be\rmH_{\hbar,c}(n)^\loc\be.
	\]
Furthermore, it is the inverse of the localized spherical Dunkl isomorphism in Proposition \ref{prop: dunkl embed loc neq 0}.
\end{proposition}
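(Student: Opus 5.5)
The strategy is not to build \(\Phi_{\hbar,c}\) from the presentation, but to identify it with the known inverse \((\widetilde\Theta^{\sph}_{\hbar,c})^{-1}\) of Proposition~\ref{prop: dunkl embed loc neq 0}. It then suffices to compute \((\widetilde\Theta^{\sph}_{\hbar,c})^{-1}\) on the generators \(p_a\), \(\delta^{-2}\), \(\partial_{p_a}\) and to check that it agrees with \eqref{for: inverse dunkl power sum}. Uniqueness follows because these elements topologically generate \(\mD_\hbar(\fkh_{\reg})^{S_n}\). Indeed, \(\bbC[\fkh_{\reg}]^{S_n}=\bbC[p_1,\dots,p_n,\delta^{-2}]\), and since \(\fkh_{\reg}\to\fkh_{\reg}/S_n\) is étale, \(\mD_\hbar(\fkh_{\reg})^{S_n}\cong\mD_\hbar(\fkh_{\reg}/S_n)\). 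The latter is generated over its functions by the coordinate derivations \(\partial_{p_a}\), and any element is an \(\hbar\)-adic limit of polynomials in these. Existence then follows because \((\widetilde\Theta^{\sph}_{\hbar,c})^{-1}\) is already a \(\bbC[[\hbar]]\)-algebra isomorphism.

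On functions, \(\widetilde\Theta_{\hbar,c}\) sends \(x\mapsto x\). Hence \(\widetilde\Theta^{\sph}_{\hbar,c}(p_a\be)=p_a\) and \(\widetilde\Theta^{\sph}_{\hbar,c}(\delta^{-2}\be)=\delta^{-2}\), where \(\be\) commutes with invariant functions. For the derivations I would use the chain rule on \(\fkh_{\reg}\):
\[
\sum_{i=1}^n x_i^{b-1}\partial_{x_i}=\sum_{a=1}^n a\,p_{a+b-2}\,\partial_{p_a}=\sum_{a=1}^n a\,G(p)_{b,a}\,\partial_{p_a}.
\]
Inverting the symmetric matrix \(G(p)\) gives
\[
\partial_{p_a}=\frac1a\sum_b (G(p)^{-1})_{a,b}\sum_i x_i^{b-1}\partial_{x_i}.
\]
It therefore remains to show
\[
\widetilde\Theta^{\sph}_{\hbar,c}\Bigl(\be\bigl(\textstyle\sum_i x_i^{b-1}y_i\bigr)\be\Bigr)=\sum_i x_i^{b-1}\partial_{x_i}
\]
as operators in \(\mD_\hbar(\fkh_{\reg})^{S_n}\), that is, after restriction to \(S_n\)-invariant functions.

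This last identity is the main computation. Since \(\sum_i x_i^{b-1}y_i\) is \(S_n\)-invariant, it commutes with \(\be\), so \(\be(\cdot)\be=(\cdot)\be\). Under the Dunkl embedding it maps to
\[
\sum_i x_i^{b-1}\Bigl(\partial_{x_i}+c\sum_{j\ne i}\frac{1}{x_i-x_j}(s_{ij}-1)\Bigr),
\]
with sign conventions fixed by the specialization of \eqref{eq: dunkl} to type \(A\). Composing on the right with \(\be\) kills every term containing \(s_{ij}-1\), because \((s_{ij}-1)\be=0\). What remains is exactly \(\sum_i x_i^{b-1}\partial_{x_i}\be\), which is the operator \(\sum_i x_i^{b-1}\partial_{x_i}\) under the spherical identification. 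So the Dunkl correction vanishes for these first-order invariants.

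Multiplying by the invariant functions \(\frac1a(G(p)^{-1})_{a,b}\), which lie in \(\bbC[p,\delta^{-2}]\) and are matched by the first step, and summing over \(b\) yields \((\widetilde\Theta^{\sph}_{\hbar,c})^{-1}(\partial_{p_a})=\Xi_{p_a}\). The proof closes as follows: \(\Phi_{\hbar,c}:=(\widetilde\Theta^{\sph}_{\hbar,c})^{-1}\) satisfies \eqref{for: inverse dunkl power sum}, it is an algebra morphism, and it is unique by the generation statement. The step I would check most carefully is the \(\hbar\)-scaling in the chain rule: \(\partial_{p_a}\) denotes \(\hbar\,\partial/\partial p_a\), and it must match consistently with \(y_i\mapsto\partial_{x_i}=\hbar\,\partial/\partial x_i\), so that no stray \(\hbar\) appears in \(\Xi_{p_a}\).
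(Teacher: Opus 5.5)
Your proof is correct and rests on the same key computation as the paper's: the chain rule $\sum_i x_i^{b-1}\partial_{x_i}=\sum_a a\,G(p)_{b,a}\partial_{p_a}$, inverted via the Hankel matrix $G(p)$, together with $(s_{ij}-1)\be=0$ killing the Dunkl correction in $\widetilde\Theta^{\sph}_{\hbar,c}\bigl(\be\sum_i x_i^{b-1}y_i\be\bigr)$. The difference is organizational: the paper first verifies the Weyl-algebra relations for $p_a\be$, $\delta^{-2}\be$, $\Xi_{p_a}$ (including a direct Dunkl-operator computation of $[\Xi_{p_a},F\be]=\partial_{p_a}(F)\be$) to build $\Phi_{\hbar,c}$ from a presentation, and only then identifies it with $(\widetilde\Theta^{\sph}_{\hbar,c})^{-1}$, whereas you define $\Phi_{\hbar,c}:=(\widetilde\Theta^{\sph}_{\hbar,c})^{-1}$ outright and compute its values on generators, which makes that relation check unnecessary.
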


\begin{proof}
	The action of \(S_n\) on \(\fkh_{\reg}\) is free, and the quotient map
\[
\fkh_{\reg}\longrightarrow\fkh_{\reg}/S_n
\]
is finite \'etale. Hence \'etale descent for
differential operators gives
\[
\mD_\hbar(\fkh_{\reg})^{S_n}
\cong
\mD_\hbar(\fkh_{\reg}/S_n).
\]

Since
\[
\bbC[\fkh_{\reg}]^{S_n}
=
\bbC[p_1,\ldots,p_n,\delta^{-2}],
\]
	the uncompleted Weyl algebra of differential operators on
	\(\fkh_{\reg}/S_n\) is generated by
\[
p_1,\ldots,p_n,
\qquad
\delta^{-2},
\qquad
\partial_{p_1},\ldots,\partial_{p_n}.
\]
Its \(\hbar\)-adic completion
\(\mD_\hbar(\fkh_{\reg})^{S_n}\) is topologically generated by the same elements.
It therefore suffices to verify the defining relations for
this uncompleted Weyl algebra and then extend uniquely by
\(\hbar\)-adic continuity.

The generators of this uncompleted Weyl algebra
\[
p_1,\ldots,p_n,
\qquad
\delta^{-2},
\qquad
\partial_{p_1},\ldots,\partial_{p_n},
\]
 satisfy the following relations
	\begin{equation}\label{eq:power-sum-differential-relations}
	\begin{aligned}&
	\relax [p_a,p_b]=[p_a, \delta^{-2}]=0,\quad
	[\partial_{p_a},\partial_{p_b}]=0,
	\quad \delta^2 \delta^{-2} = \delta^{-2} \delta^{2} = 1,
	\\
	& [\partial_{p_a},F]
	  =\,\partial_{p_a}(F),
	  \quad
	  F\in\bbC[\fkh_{\reg}]^W.
	\end{aligned}
	\end{equation}
	To prove the multiplicative property, 
	we only need to check: for every
	\(F\in\bbC[\fkh_{\reg}]^W\),
	\begin{equation}\label{eq:Cherednik-power-sum-Weyl-relation}
	  [ \Xi_{p_a} ,F \be]
	  =
	  \partial_{p_a}(F) \be,
	  \qquad
	  [ \Xi_{p_a} ,\Xi_{p_b}]
	  =
	  0.
	\end{equation}
	
	We prove the first equality.
	Recall that  Dunkl operators are
	\[
	  D_i
	  =
	  \partial_{x_i}
	  +
	  c\sum_{j\neq i}(x_i-x_j)^{-1}(s_{ij}-1).
	\]
	Since \(F\) is \(S_n\)-invariant, it commutes with every \(s_{ij}\).
	Therefore
	\[
	\begin{aligned}
	  {}[D_i,F]
	  &=
	  [\partial_{x_i},F]
	  +
	  c\sum_{j\neq i}
	  \left[
	    (x_i-x_j)^{-1}(s_{ij}-1),
	    F
	  \right]                                                     \\
	  &=
	  \partial_{x_i}(F).
	\end{aligned}
	\]
	Using \eqref{eq:power-sum-Cherednik-derivative}, the fact that
	\(F\) commutes with \(\be\), and the preceding identity, we obtain
	\[
	\begin{aligned}
	  {}[ \Xi_{p_a} ,F\be]
	  &=
	  \frac{1}{a}\sum_{b=1}^n
	  \bigl(G(p)^{-1}\bigr)_{a, b}
	  \left[
	    \be\left(\sum_{i=1}^n x_i^{b-1}y_i\right)\be,
	    F\be
	  \right]                                                     \\
	  &=
	  \frac{1}{a}\sum_{b=1}^n
	  \bigl(G(p)^{-1}\bigr)_{a, b}
	  \be\left(
	    \sum_{i=1}^n x_i^{b-1}[y_i,F]
	  \right)\be                                                  \\
	  &=
	  \frac{1}{a}\sum_{b=1}^n
	  \bigl(G(p)^{-1}\bigr)_{a, b}
	  \be\left(
	    \sum_{i=1}^n x_i^{b-1}\partial_{x_i}(F)
	  \right)\be.
	\end{aligned}
	\]
	The chain rule gives
	\[
	\begin{aligned}
	  \sum_{i=1}^n x_i^{b-1}\partial_{x_i} (F)
	  &=
	  \sum_{i=1}^n x_i^{b-1}
	  \sum_{k=1}^n
	  kx_i^{k-1}\partial_{p_k}(F )                              \\
	  &=
	  \sum_{k=1}^n
	  k\left(\sum_{i=1}^n x_i^{b+k-2}\right)
	  \partial_{p_k}(F)                                          \\
	  &=
	  \sum_{k=1}^n
	  k p_{b+k-2}\partial_{p_k}(F)                               \\
	  &=
	  \sum_{k=1}^n
	  kG(p)_{b, k}\partial_{p_k}(F).
	\end{aligned}
	\]
	It follows that
	\[
	\begin{aligned}
	  {}[ \Xi_{p_a} ,F\be]
	  &=
	  \frac{1}{a}
	  \sum_{b,k=1}^n
	  \bigl(G(p)^{-1}\bigr)_{a, b}
	  kG(p)_{b, k}\partial_{p_k}(F)\be                             \\
	  &=
	  \frac{1}{a}
	  \sum_{k=1}^n
	  k\delta_{a, k}\partial_{p_k}(F)\be                           \\
	  &=
	  \,\partial_{p_a}(F)\be.
	\end{aligned}
	\]
	This proves the first identity in
	\eqref{eq:Cherednik-power-sum-Weyl-relation}. 
	
	For the second one,
	we reduce the problem to checking that \(\widetilde{\Theta}_{\hbar,c}^{\sph}
	\bigl( \Xi_{p_a} \bigr)
	 = \partial_{p_a}\),
	 since \(\widetilde{\Theta}_{\hbar,c}^{\sph}\) is an isomorphism, under which one derives \([\Xi_{p_a}, \Xi_{p_b}] = 0\) from \([\partial_{p_a}, \partial_{p_b}] = 0\). 
	In fact, 
	for \(0\leq r\leq n-1\), the chain rule yields
	\[
	  \sum_{i=1}^n x_i^r\partial_{x_i}
	  =
	  \sum_{a=1}^n
	  a p_{a+r-1}\partial_{p_a}.
	\]
	It gives
	\begin{equation}\label{eq:inverse-Hankel-vector-fields}
	  \partial_{p_a}
	  =
	  \frac{1}{a}\sum_{b=1}^n
	  \bigl(G(p)^{-1}\bigr)_{a, b}
	  \sum_{i=1}^n x_i^{b-1}\partial_{x_i}.
	\end{equation}
	Since \((s_{ij}-1)\be=0\), for \(0\leq r\leq n-1\) one has
	\begin{equation}\label{eq:spherical-image-power-sum-vector-field}
	  \widetilde\Theta_{\hbar,c}^{\sph}
	  \left(
	    \be\left(\sum_{i=1}^n x_i^r y_i\right)\be
	  \right)
	  =
	  \sum_{i=1}^n x_i^r\partial_{x_i}.
	\end{equation}
	Combining \eqref{eq:power-sum-Cherednik-derivative},
	\eqref{eq:inverse-Hankel-vector-fields}, and
	\eqref{eq:spherical-image-power-sum-vector-field}, we compute
	\[
	\begin{aligned}
	\widetilde{\Theta}_{\hbar,c}^{\sph}
	\bigl( \Xi_{p_a} \bigr)
	&=
	\widetilde{\Theta}_{\hbar,c}^{\sph}
	\left(
	\frac{1}{a}
	\sum_{b=1}^n
	\bigl(G(p)^{-1}\bigr)_{a, b}
	\be
	\left(
	\sum_{i=1}^n x_i^{b-1}y_i
	\right)
	\be
	\right)\\
	&=
	\frac{1}{a}
	\sum_{b=1}^n
	\bigl(G(p)^{-1}\bigr)_{a, b}
	\widetilde{\Theta}_{\hbar,c}^{\sph}
	\left(
	\be
	\left(
	\sum_{i=1}^n x_i^{b-1}y_i
	\right)
	\be
	\right)\\
	&=
	\frac{1}{a}
	\sum_{b=1}^n
	\bigl(G(p)^{-1}\bigr)_{a, b}
	\sum_{i=1}^n x_i^{b-1}\partial_{x_i}\\
	&=
	\partial_{p_a}.
	\end{aligned}
	\]
	Thus
	\eqref{for: inverse dunkl power sum} extends to an algebra morphism.

	Finally, under the standard identification
	\[
	\be\bigl(\mD_\hbar(\fkh_{\reg})\rtimes S_n\bigr)\be
	\cong
	\mD_\hbar(\fkh_{\reg})^{S_n},
	\]
	we have
	\begin{equation}\label{eq:spherical-dunkl-on-functions}
	\widetilde{\Theta}_{\hbar,c}^{\sph}(p_a\be)=p_a,
	\qquad
	\widetilde{\Theta}_{\hbar,c}^{\sph}(\delta^{-2}\be)=\delta^{-2}.
	\end{equation}
	The above calculations show
	that the composite of \(\Phi_{\hbar, c}\) defined by
	\eqref{for: inverse dunkl power sum} with
	\(\widetilde\Theta_{\hbar,c}^{\sph}\) is the identity on \(\mD_\hbar(\fkh_{\reg})^{S_n}\). Since
	\(\widetilde\Theta_{\hbar,c}^{\sph}\) is an isomorphism by
	Proposition~\ref{prop: dunkl embed loc neq 0}, the constructed morphism \(\Phi_{\hbar,c}\)
	is precisely
	\(\bigl(\widetilde\Theta_{\hbar,c}^{\sph}\bigr)^{-1}\).
\end{proof}

\begin{remark}[Dependence on parameter \(c\)]
	\label{rem:inverse-spherical-dunkl-c-dependence}
	The assignment \eqref{for: inverse dunkl power sum}
	contains no explicit occurrence of \(c\). This does not mean that
	\(\Phi_{\hbar,c}\) is independent of \(c\): for each value of \(c\), its target is the algebra
	\[
	\be\rmH_{\hbar,c}(n)^\loc\be,
	\]
	whose multiplication depends on \(c\) through the defining relations.
\end{remark}

The following theorem computes the images of the quantum trace operators.

\begin{theorem}[Explicit source elements under the inverse spherical Dunkl map]
\label{thm: Upsilon in power-sum}
Let \(Q\) be the Jordan quiver and let \(n\) be a positive integer.
Fix \(c\in\bbC\) and the root data \((R,R^+,W)\). Then, for
\(s\in\bbZ_{\geq0}\) and \(r\in\bbZ_{\geq1}\),
\begin{equation}\label{eq:image-quantum-trace-inverse-dunkl-power-sum}
	\Upsilon_{\hbar,c}(\overline w_{s,r})
	=
				\sum_{(\pi,\alpha)}\hbar^{r - |\pi|}
				C_{\pi,\alpha}^{s,r}(p)\be
				\prod_{B\in\pi}
				\left(
				\Xi_{p_{\alpha(B)}}
				-
				\frac{1}{2}
				\partial_{p_{\alpha(B)}}(\ln (\delta^2))\be
				\right).
		\end{equation}
For \(r=0\), one has
\[
	\Upsilon_{\hbar,c}(\overline w_{s,0})
	=\Phi_{\hbar,c}\bigl(\hc(\Tr^q(X^s))\bigr)=p_s\be.
\]
\end{theorem}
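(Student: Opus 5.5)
The proof will be a direct unwinding of the definition of \(\Upsilon_{\hbar,c}\) as the composite \((\widetilde\Theta_{\hbar,c}^{\sph})^{-1}\circ\iota\circ\widetilde{\Tr^q}\) from Proposition~\ref{prop: quant preproj to RCA}, combined with the power-sum formulas already established. The first step is to record that \(\widetilde{\Tr^q}(\overline w_{s,r})=\hc(\Tr^q(w_{s,r}))=\hc(\Tr^q(X^sD^r))\). This holds because \(\widetilde{\Tr^q}\) is the descended quantum trace followed by the completed Harish--Chandra homomorphism, and because \eqref{for: trace w_sr} identifies \(\Tr^q(w_{s,r})\) with \(\Tr^q(X^sD^r)\). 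Proposition~\ref{prop: hc formula power sums} then gives
\[
\iota\,\widetilde{\Tr^q}(\overline w_{s,r})=\sum_{(\pi,\alpha)}\hbar^{r-|\pi|}C^{s,r}_{\pi,\alpha}(p)\prod_{B\in\pi}\Bigl(\partial_{p_{\alpha(B)}}-\tfrac12\partial_{p_{\alpha(B)}}(\ln\delta^2)\Bigr)
\]
as an identity in \(\mD_\hbar(\fkh_{\reg})^{S_n}\). This is a finite sum, because \(\pi\) ranges over partitions of \([r]\) and \(\alpha\) over maps to \(\{1,\dots,n\}\).

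Next I apply \(\Phi_{\hbar,c}=(\widetilde\Theta_{\hbar,c}^{\sph})^{-1}\) from Proposition~\ref{prop: inverse dunkl power sum}. Since \(\Phi_{\hbar,c}\) is a \(\bbC[[\hbar]]\)-algebra morphism, it suffices to determine its values on the individual factors. By definition it sends \(p_a\mapsto p_a\be\) and \(\partial_{p_a}\mapsto\Xi_{p_a}\). Because \(C^{s,r}_{\pi,\alpha}(p)\) is a polynomial in the \(p_k\), it maps to \(C^{s,r}_{\pi,\alpha}(p)\be\). Here I use that \(\be\) is the unit of the spherical subalgebra and commutes with invariant functions, so \(F(p)\be\,G(p)\be=FG(p)\be\). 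For the scalar function \(\partial_{p_a}(\ln\delta^2)=\delta^{-2}\partial_{p_a}(\delta^2)\), I note that \(\delta^2\) is a polynomial in \(p_1,\dots,p_n\), since \(\delta^2=\det G(p)\). It follows that \(\partial_{p_a}(\ln\delta^2)\) lies in \(\bbC[p_1,\dots,p_n,\delta^{-2}]\), and hence maps to itself times \(\be\) under the assignment \(\delta^{-2}\mapsto\delta^{-2}\be\). Multiplicativity then turns each factor into \(\Xi_{p_{\alpha(B)}}-\frac12\partial_{p_{\alpha(B)}}(\ln\delta^2)\be\). The product over blocks is well defined in the target because \(\Phi_{\hbar,c}\) is an isomorphism and the source factors commute. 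This yields \eqref{eq:image-quantum-trace-inverse-dunkl-power-sum}.

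For \(r=0\), I use \(\Tr^q(w_{s,0})=\Tr(X^s)\), which is a function in \(\bbC[\fkg]^{\fkg}\). By Theorem~\ref{thm: hc regular}(1), \(\hc\) restricts it to \(\fkh\), giving \(p_s\). I then apply \(\Phi_{\hbar,c}\), which gives \(p_s\be\). For \(s=0\) this reads \(n\be\), consistent with \(p_0=n\); for \(s>n\), \(p_s\) is a polynomial in \(p_1,\dots,p_n\) and the same reasoning applies.

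The main obstacle I anticipate is bookkeeping rather than anything conceptual. Specifically, I must justify that applying \(\Phi_{\hbar,c}\) commutes with the scalar-coefficient expressions, namely that every invariant rational function \(F\in\bbC[\fkh_{\reg}]^{S_n}\) is sent to \(F\be\). I plan to argue this from the generators \(p_a\) and \(\delta^{-2}\) together with multiplicativity, and to cross-check it with \eqref{eq:spherical-dunkl-on-functions}, which says that \(\widetilde\Theta^{\sph}_{\hbar,c}(F\be)=F\).
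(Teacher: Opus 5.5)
Your proposal is correct and follows the paper's proof: the paper also writes \(\widetilde{\Tr^q}(\overline w_{s,r})=\hc(\Tr^q(X^sD^r))\), applies Proposition~\ref{prop: inverse dunkl power sum} to the formula of Proposition~\ref{prop: hc formula power sums}, and treats \(r=0\) via \(\hc(\Tr^q(X^s))=p_s\). Your extra bookkeeping, namely that every \(F\in\bbC[p_1,\dots,p_n,\delta^{-2}]\) (including \(\partial_{p_a}(\ln\delta^2)\)) is sent to \(F\be\), makes explicit what the paper leaves implicit.
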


\begin{proof}
By the definition of \(\Upsilon_{\hbar,c}\) and the notation for the
quotient class,
\[
\widetilde{\Tr^q}(\overline w_{s,r})
=
\hc\!\left(\Tr^q(w_{s,r})\right)
=
\hc\!\left(\Tr^q(X^sD^r)\right).
\]
It is therefore enough to apply
Proposition~\ref{prop: inverse dunkl power sum} to
Proposition~\ref{prop: hc formula power sums}.  For \(r=0\),
\[
\widetilde{\Tr^q}(\overline w_{s,0})
=
\hc\!\left(\Tr^q(X^s)\right)
=p_s.
\]
\end{proof}


\appendix

\section{Multivariate Fa\`a di Bruno formula}\label{app: fdb}

We recall the multivariate Fa\`a di Bruno formula of Constantine--Savits \cite{ConSav1996Mul}.
Throughout this section,
\(D_{x_i}\), \(D_{y_a}\), \(D_{u_j}\), and \(D_{v_a}\)
denote the ordinary partial derivatives
\(\frac{\partial}{\partial x_i}\),
\(\frac{\partial}{\partial y_a}\),
\(\frac{\partial}{\partial u_j}\), and
\(\frac{\partial}{\partial v_a}\), respectively.

Let
\[
f=f(y_1,\ldots,y_m)
\]
be a polynomial function on \(\bbC^m\). Let
\[
g=(g^{(1)},\ldots,g^{(m)}):\bbC^d\longrightarrow\bbC^m
\]
be a polynomial map, where
\[
g^{(a)}=g^{(a)}(x_1,\ldots,x_d),
\qquad a=1,\ldots,m.
\]
We denote the composition by
\[
h=f\cc g,
\qquad
h(x_1,\ldots,x_d)
=
f\bigl(g(x_1,\ldots,x_d)\bigr).
\]

For
\(\nu=(\nu_1,\ldots,\nu_d)\in\bbZ_{\geq 0}^d,\)
{we use the  following multi-index conventions}
\[
	D_x^\nu
	=
	D_{x_1}^{\nu_1}\cdots D_{x_d}^{\nu_d},
	\qquad
	\nu!=\nu_1!\cdots \nu_d!.
\]
For a multi-index \(\ell\in\bbZ_{\geq 0}^d\), write
\[
	g_\ell^{(a)}=(D_x^\ell g^{(a)})(t_0),
	\qquad
	g_\ell=(g_\ell^{(1)},\ldots,g_\ell^{(m)}).
\]
For \(k=(k_1,\ldots,k_m)\in\bbZ_{\geq 0}^m\), put
\[
	[g_\ell]^k=\prod_{a=1}^m (g_\ell^{(a)})^{k_a}.
\]
Fix a total order \(\prec\) on \(\bbZ_{\geq0}^d\) for which \(0\) is the
least element.

For \(1\leq s\leq |\nu|\), let \(P_s(\nu,\beta)\) be the set of all
\[
	(k_1,\ldots,k_s;\ell_1,\ldots,\ell_s)
\]
such that
\begin{align*}
&k_j\in\bbZ_{\geq 0}^m,
\qquad |k_j|>0,
\qquad j=1,\ldots,s,\\
&0\prec \ell_1\prec\cdots\prec\ell_s,
\qquad \ell_j\in\bbZ_{\geq 0}^d,\\
&\sum_{j=1}^s k_j=\beta,\qquad
\sum_{j=1}^s |k_j|\ell_j=\nu.
\end{align*}
Here \(|k_j|=k_{j,1}+\cdots+k_{j,m}\).

With the above conventions,
the multivariate Fa\`a di Bruno formula,
given in
Constantine--Savits \cite{ConSav1996Mul},
is as follows.

\begin{proposition}[{\cite[Theorem 2.1]{ConSav1996Mul}}]
For \( |\nu|=0\), one has \(D_x^0h(t_0)=f(g(t_0))\).
For \( |\nu|\geq1\), one has
	\begin{equation}\label{for: CS}
	D_x^\nu h(t_0)
	=
	\sum_{1\leq |\beta|\leq |\nu|}
	(D_y^\beta f)(g(t_0))
	\sum_{s=1}^{|\nu|}
	\sum_{P_s(\nu,\beta)}
	\nu!
	\prod_{j=1}^s
	\frac{[g_{\ell_j}]^{k_j}}{k_j!\,[\ell_j!]^{|k_j|}}.
	\end{equation}
\end{proposition}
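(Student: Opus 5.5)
The plan is to avoid induction on \(|\nu|\) and read \eqref{for: CS} off as a coefficient identity between two Taylor expansions. Since \(f\) and \(g\) are polynomials, all Taylor series below are finite, so no convergence questions arise. The case \(|\nu|=0\) is immediate, so assume \(|\nu|\geq 1\).

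First, expand \(g\) around \(t_0\): writing \(x=t_0+z\),
\[
g^{(a)}(t_0+z)=g^{(a)}(t_0)+\sum_{\ell\neq 0}\frac{g^{(a)}_\ell}{\ell!}z^\ell .
\]
Put \(\Delta^{(a)}(z):=\sum_{\ell\neq0}g^{(a)}_\ell z^\ell/\ell!\). Next, expand \(f\) around \(g(t_0)\):
\[
h(t_0+z)=\sum_{\beta}\frac{(D_y^\beta f)(g(t_0))}{\beta!}\prod_{a=1}^m\bigl(\Delta^{(a)}(z)\bigr)^{\beta_a}.
\]
Since \(D_x^\nu h(t_0)\) equals \(\nu!\) times the coefficient of \(z^\nu\) in \(h(t_0+z)\), it suffices to compute that coefficient. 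Because every \(\Delta^{(a)}\) has no constant term, the term indexed by \(\beta\) contributes to \(z^\nu\) only when \(1\leq|\beta|\leq|\nu|\) (for \(|\nu|\geq1\), the \(\beta=0\) term is constant).

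The key step is the multinomial regrouping. Expand \(\prod_a(\Delta^{(a)})^{\beta_a}\) as a sum over choices of one exponent \(\ell\neq0\) for each of the \(|\beta|\) factors. Group the choices according to the set of distinct exponents used, listed as \(0\prec\ell_1\prec\cdots\prec\ell_s\). For each such list, record the vector \(k_j\in\bbZ_{\geq0}^m\) whose \(a\)-th entry counts the factors of type \(a\) assigned \(\ell_j\). Then \(|k_j|>0\) and \(\sum_j k_j=\beta\). For each fixed \(a\), the number of assignments producing a given \((k_j)\) is the multinomial coefficient \(\beta_a!/\prod_j k_{j,a}!\). Taking the product over \(a\) gives \(\beta!/\prod_j k_j!\), which cancels the \(1/\beta!\).

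Each such assignment contributes the monomial \(z^{\sum_j|k_j|\ell_j}\) with coefficient \(\prod_j [g_{\ell_j}]^{k_j}/[\ell_j!]^{|k_j|}\). Selecting the exponent \(z^\nu\) imposes \(\sum_j|k_j|\ell_j=\nu\), so the data \((k_1,\dots,k_s;\ell_1,\dots,\ell_s)\) range exactly over \(P_s(\nu,\beta)\). Multiplying by \(\nu!\) then gives \eqref{for: CS}. The range \(1\leq s\leq|\nu|\) holds automatically, since each \(\ell_j\neq0\) and the \(\ell_j\) are distinct.

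The main obstacle is this bookkeeping: one must check that the choice of the total order \(\prec\), with \(0\) least, makes the correspondence bijective. Concretely, each unordered family of distinct nonzero exponents must be counted exactly once, and the multiplicities must factor exactly as \(\beta!/\prod_j k_j!\). I would verify this by writing an assignment as a function from the multiset of \(|\beta|\) typed factor slots to \(\bbZ_{\geq0}^d\setminus\{0\}\), and counting its fibers over the pair consisting of the image set, sorted by \(\prec\), and the type-count vectors \(k_j\). In the paper's application (the \(\nu=(1,\dots,1)\) case used in Theorem~\ref{thm: radial part}), all \(\ell_j\) are \(0/1\) vectors. The formula then collapses to a sum over set partitions \(\pi\) of \([r]\) and labels \(\alpha\), which I would record as a corollary.
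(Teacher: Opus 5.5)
Your argument is correct. It necessarily takes a different route from the paper, because the paper gives no proof of this statement. It imports the formula from Constantine--Savits, and the only thing it proves itself is the square-free reformulation in Theorem~\ref{thm: FDB}, by matching the data in $P_s(\nu,\beta)$ with pairs $(\pi,\alpha)$.

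Your route composes the finite Taylor expansions of $g$ at $t_0$ and of $f$ at $g(t_0)$, then reads off $\nu!$ times the coefficient of $z^\nu$. It is self-contained and makes each factor transparent:
\begin{itemize}
\item $\nu!$ comes from coefficient extraction;
\item $\beta!/\prod_j k_j!$ comes from the multinomial count of typed slots, and it cancels the $1/\beta!$ in Taylor's formula for $f$;
\item $[\ell_j!]^{-|k_j|}$ comes from the Taylor coefficients of $g$.
\end{itemize}
It also shows that the choice of $\prec$ is immaterial. The order only lists the set of distinct exponents canonically, and requiring $0$ to be the least element merely encodes $\ell_j\neq0$.

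One small bookkeeping fix. The bound $s\leq|\nu|$ follows from $|\nu|=\sum_j|k_j|\,|\ell_j|\geq s$ (using $|k_j|\geq1$ and $|\ell_j|\geq1$), not from distinctness of the $\ell_j$. Distinctness is what prevents double counting. The same identity gives $|\beta|=\sum_j|k_j|\leq|\nu|$, so the outer range of $\beta$ is automatic because $P_s(\nu,\beta)$ is empty otherwise.

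What the citation buys over your version is generality beyond polynomials. You can recover that by replacing $f$ and $g$ with their Taylor polynomials of order $|\nu|$, since both sides depend only on these jets.

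Conversely, the paper only uses $\nu=(1,\dots,1)$. Your coefficient extraction applied directly to $u_1\cdots u_r$ gives \eqref{Brunosformula} in one step. Each slot must carry a nonzero $0/1$ exponent, with pairwise disjoint supports, i.e.\ a block of a set partition. The $\beta!$ ways of matching typed slots to blocks cancel the $1/\beta!$. This would make both the citation and the bijection argument in the proof of Theorem~\ref{thm: FDB} unnecessary.
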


We now specialize \eqref{for: CS} to the form used in this paper.  Take
\[
	d=r,
	\qquad
	m=n,
	\qquad
	x=u,
	\qquad
	t_0=0,
\]
and let
\[
	f=F(v_1,v_2,\ldots,v_n),
	\qquad
	g^{(a)}=\lambda_a(u_1,u_2,\ldots,u_r),
	\qquad a=1,\ldots,n.
\]
Thus
\[
	h(u_1,\ldots,u_r)=F(\lambda_1(u),\ldots,\lambda_n(u)).
\]
Finally, take the multi-index
\begin{equation}\label{eq:squarefree-nu}
	\nu=(1,1,\ldots,1)\in\bbZ_{\geq 0}^r.
\end{equation}
Then
\[
	D_u^\nu=D_{u_1}\cdots D_{u_r},
	\qquad
	|\nu|=r,
	\qquad
	\nu!=1.
\]
 {Substituting these choices into \eqref{for: CS} gives}
\begin{equation}\label{eq:CS-specialized}
D_{u_1}\cdots D_{u_r}F(\lambda(u))\big|_{u=0}
=
\sum_{1\leq |\beta|\leq r}
(D_v^\beta F)(\lambda(0))
\sum_{s=1}^{r}
\sum_{P_s(\nu,\beta)}
\prod_{j=1}^s
\frac{[g_{\ell_j}]^{k_j}}{k_j!\,[\ell_j!]^{|k_j|}},
\end{equation}
where now
\[
	[g_{\ell_j}]^{k_j}
	=
	\prod_{a=1}^n
	\left(D_u^{\ell_j}\lambda_a(0)\right)^{k_{j,a}}.
\]

 { {We now rewrite \eqref{eq:CS-specialized} in a more compact form.}}  Let
\[
	[r]=\{1,\dots,r\},
	\qquad
	[n]=\{1,\dots,n\}.
\]
For a subset \(B\subseteq [r]\), let \(I_B\in\bbZ_{\geq 0}^r\) be its
indicator multi-index:
\[
	(I_B)_i=
	\begin{cases}
	1, & i\in B,\\
	0, & i\notin B.
	\end{cases}
\]
Define
\(D_u^B
=
\prod_{i\in B}D_{u_i}\).
If \(B=\varnothing\), we set
\(D_u^\varnothing = \id\).
For example, if \(B=\{2,5,7\}\), then
\[
	D_u^B
	=
	D_{u_2}D_{u_5}D_{u_7}.
\]

A \emph{set partition} of \([r]\) is a collection
\[
	\pi=\{B_1,\dots,B_\ell\}
\]
of nonempty, pairwise disjoint subsets \(B_j\subseteq [r]\) such that
\[
	B_1\sqcup\cdots\sqcup B_\ell=[r].
\]
The subsets \(B_j\) are the \emph{blocks} of \(\pi\).  
Let
$\Pi([r])$
denote the set of all set partitions of \([r]\).
For a partition \(\pi\in\Pi([r])\), a map
\[
	\alpha:\pi\to[n]
\]
assigns to each block \(B\in\pi\) an index
\(\alpha(B)\in\{1,\dots,n\}\).
For such a pair
\((\pi,\alpha)\), define the \(v\)-derivative operator
\[
	D_v^\alpha
	=
	\prod_{B\in\pi}D_{v_{\alpha(B)}}.
\]
For example, if
\[
	\pi=\{B_1,B_2,B_3\},
	\qquad
	\alpha(B_1)=2,
	\quad
	\alpha(B_2)=2,
	\quad
	\alpha(B_3)=5,
\]
then
\[
	D_v^\alpha F
	=
	D_{v_2}^2D_{v_5}F.
\]

\begin{theorem}\label{thm: FDB}
With the above notation, \eqref{eq:CS-specialized} is equivalent to the
following formula:
\begin{equation}\label{Brunosformula}
\begin{aligned}
&\left.
D_{u_1}D_{u_2}\cdots D_{u_r}
F\bigl(\lambda_1(u),\dots,\lambda_n(u)\bigr)
\right|_{u=0}\\
&=
{
\sum_{\pi\in\Pi([r])}
\sum_{\alpha:\pi\to[n]}
\left(
	\prod_{B\in\pi}
	\left(D_u^B\lambda_{\alpha(B)}\right)(0)
\right)
(D_v^\alpha F)(\lambda(0)).
}
\end{aligned}
\end{equation}
\end{theorem}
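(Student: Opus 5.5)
The plan is to start from the Constantine--Savits formula \eqref{eq:CS-specialized} with \(\nu=(1,\dots,1)\) and reorganize its index set, \(\bigsqcup_s P_s(\nu,\beta)\), into pairs \((\pi,\alpha)\). The starting point is a simple constraint. In \(\sum_j |k_j|\ell_j=\nu\) with \(|k_j|>0\), every entry of \(\nu\) equals \(1\). Hence each \(\ell_j\) is a \(0/1\) vector, so \(\ell_j=I_{B_j}\) for a nonempty subset \(B_j\subseteq[r]\). Moreover no coordinate can be hit twice, which forces \(|k_j|=1\) for every \(j\) and makes the \(B_j\) pairwise disjoint with union \([r]\). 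Thus \(\{B_1,\dots,B_s\}\) is a set partition \(\pi\) of \([r]\) with \(s=|\pi|\) blocks. Since \(|k_j|=1\), we have \(k_j=e_{a_j}\) for a unique \(a_j\in[n]\), and we set \(\alpha(B_j):=a_j\). The condition \(\sum_j k_j=\beta\) then says that \(\beta\) is exactly the multiplicity vector of \(\alpha\), i.e.\ \(\beta_a=\#\alpha^{-1}(a)\).

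Next I check that this assignment is a bijection. Fix the total order \(\prec\) on \(\bbZ_{\geq0}^d\). Then the strict chain \(0\prec\ell_1\prec\cdots\prec\ell_s\) gives a canonical ordering of the blocks of any set partition. Consequently the map
\[
(k_1,\dots,k_s;\ell_1,\dots,\ell_s)\longmapsto(\pi,\alpha)
\]
from \(\bigsqcup_{\beta}\bigsqcup_{s} P_s(\nu,\beta)\) to \(\{(\pi,\alpha)\}\) is a bijection, and the \(\beta\) of each term is recovered from \(\alpha\). The one point to be careful about is that the \(\ell_j\) are distinct, so no two blocks can collapse. This holds automatically because the blocks are disjoint and nonempty. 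The ordering \(\prec\) only removes the \(s!\) relabelings; it introduces no multiplicity.

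Then I simplify the weights. With \(|k_j|=1\) we have \(k_j!=1\) and \(\ell_j!=1\), since all entries of \(\ell_j\) are \(0\) or \(1\); we also have \(\nu!=1\). The product therefore reduces to
\[
\prod_j [g_{\ell_j}]^{k_j}=\prod_{B\in\pi}\bigl(D_u^{B}\lambda_{\alpha(B)}\bigr)(0).
\]
Finally, \(D_v^\beta F=D_v^\alpha F\), because \(\beta\) is the multiplicity vector of \(\alpha\) and partial derivatives commute. Summing over the bijection turns \eqref{eq:CS-specialized} into \eqref{Brunosformula}. Since every step is an identity of reindexed finite sums, the two formulas are equivalent. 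The main obstacle is only the bookkeeping in the bijection step, namely verifying that \(\prec\)-ordering exactly accounts for the unordered nature of blocks. An independent sanity check is to prove \eqref{Brunosformula} directly by induction on \(r\) via the Leibniz and chain rules: applying \(D_{u_{r}}\) either inserts \(r\) into an existing block or creates a new singleton block \(\{r\}\) with a fresh \(\alpha\)-value.
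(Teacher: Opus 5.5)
Your proposal is correct and follows essentially the same route as the paper. The paper likewise uses $\nu=(1,\dots,1)$ to force each $\ell_j$ to be the indicator of a nonempty block with $|k_j|=1$, so that $k_j$ is a standard basis vector. This identifies $\bigsqcup_{s,\beta}P_s(\nu,\beta)$ bijectively with pairs $(\pi,\alpha)$, where the $\prec$-order only fixes a canonical listing of the blocks. The summands are then matched using $k_j!=\ell_j!=1$ and $D_v^\beta F=D_v^\alpha F$.
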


\begin{proof}
Let \(\varepsilon_a\in\bbZ_{\geq0}^n\) denote the \(a\)-th standard basis vector.  We compare the summation data in the specialized Constantine--Savits formula \eqref{eq:CS-specialized} with pairs \((\pi,\alpha)\), where \(\pi\in\Pi([r])\) and \(\alpha:\pi\to[n]\).

Take an element
\[
	(k_1,\ldots,k_s;\ell_1,\ldots,\ell_s)\in P_s(\nu,\beta)
\]
for the square-free multi-index \(\nu=(1,\ldots,1)\).  For each \(t\in[r]\), the defining condition
\(\sum_{j=1}^s |k_j|\ell_j=\nu\) gives
\[
	1=\nu_t=\sum_{j=1}^s |k_j|(\ell_j)_t.
\]
Since each \(|k_j|\) is a positive integer and each \((\ell_j)_t\) is a nonnegative integer, this identity implies that every nonzero entry of every \(\ell_j\) is equal to \(1\), and that no two distinct \(\ell_j\)'s have a nonzero entry in the same coordinate.  Moreover, \(0\prec\ell_j\) implies that each \(\ell_j\) is nonzero.  Hence, for every \(j\), the coefficient \(|k_j|\) must be \(1\).  Therefore \(k_j=\varepsilon_{a_j}\) for a unique \(a_j\in[n]\).

Let
\[
	B_j=\operatorname{supp}(\ell_j)=\{t\in[r]:(\ell_j)_t=1\}.
\]
The preceding paragraph shows that the subsets \(B_1,\ldots,B_s\) are nonempty, pairwise disjoint, and have union \([r]\).  Thus they form a set partition \(\pi\) of \([r]\).  The order
\(0\prec\ell_1\prec\cdots\prec\ell_s\) is not additional data in the set-partition formula; it is only the unique ordering of the blocks obtained by listing their indicator multi-indices according to \(\prec\).  Define \(\alpha:\pi\to[n]\) by
\[
	\alpha(B_j)=a_j,
	\qquad
	k_j=\varepsilon_{a_j}.
\]
Then \(\beta=\sum_{j=1}^s \varepsilon_{a_j}\).  Equivalently, \(\beta_a\) is the number of blocks \(B\in\pi\) for which \(\alpha(B)=a\).

Conversely, start with a pair \((\pi,\alpha)\).  List the blocks of \(\pi\) as \(B_1,\ldots,B_s\) in the unique order satisfying
\[
	0\prec I_{B_1}\prec\cdots\prec I_{B_s}.
\]
Set
\[
	\ell_j=I_{B_j},
	\qquad
	k_j=\varepsilon_{\alpha(B_j)},
	\qquad
	\beta=\sum_{j=1}^s \varepsilon_{\alpha(B_j)}.
\]
Then \((k_1,\ldots,k_s;\ell_1,\ldots,\ell_s)\in P_s(\nu,\beta)\).  These two constructions are inverse to each other.  Hence the Constantine--Savits summation over \(s\), \(\beta\), and \(P_s(\nu,\beta)\) is the same as the set-partition summation over \(\pi\) and \(\alpha\).

It remains to compare the summands.  Under the above bijection, \(|k_j|=1\), \(k_j!=1\), and \(\ell_j!=1\), because \(\ell_j=I_{B_j}\) is square-free.  Moreover,
\[
	[g_{\ell_j}]^{k_j}
	=
	g_{I_{B_j}}^{(\alpha(B_j))}
	=
	\left(D_u^{B_j}\lambda_{\alpha(B_j)}\right)(0).
\]
The outer derivative also agrees: since \(\beta_a\) counts the blocks mapped to \(a\),
\[
	(D_v^\beta F)(\lambda(0))
	=
	\left(
	\prod_{B\in\pi}D_{v_{\alpha(B)}}F
	\right)(\lambda(0))
	=
	(D_v^\alpha F)(\lambda(0)).
\]
Substituting these identities into \eqref{eq:CS-specialized} gives exactly the displayed set-partition formula.
\end{proof}

\section{Examples of Computing Coefficients}\label{sect:examples}

\begin{example}[The complete coefficient for \texorpdfstring{\(r=1\)}{r=1}]
	Let
\[
   r=1,\qquad
   \pi=\{B\},\qquad
   B=\{1\},\qquad
   \alpha(B)=a,\qquad
   s\geq0.
\]
Since \(\alpha(B)\in\{1,\ldots,n\}\), one has \(a\geq1\).
The set \(\Ord(\pi)\) contains only the ordering
\(\bfi^B=(1)\).  The admissible exponent data are the pairs
\[
   (\ell_0^B,\ell_1^B)\in\bbZ_{\geq0}^2,
   \qquad
   \ell_0^B+\ell_1^B=a-1.
\]
Equivalently, they are
\[
   (\ell_0^B,\ell_1^B)=(u,a-1-u),
   \qquad 0\leq u\leq a-1,
\]
so there are exactly \(a\) admissible pairs.

The graph has vertex set \(\{0,1\}\).  Its distinguished edge is
\(\{0,1\}\), while the block \(B\) contributes the same underlying edge
\(\{1,0\}\), because the cyclic successor of \(1\) is \(0\) when \(r=1\).
Thus the graph has the single connected component
\[
   C=\{0,1\}.
\]
For every admissible pair, the vertex weights are
\[
   \lambda_0=s+\ell_0^B,
   \qquad
   \lambda_1=\ell_1^B.
\]
Consequently,
\[
   L_C
   =\lambda_0+\lambda_1
   =s+\ell_0^B+\ell_1^B
   =s+a-1,
\]
and every admissible pair contributes \(p_{s+a-1}\).  Summing the \(a\)
graph contributions gives
\[
   C_{\pi,\alpha}^{s,1}(p)
   =
   \sum_{\substack{\ell_0^B,\ell_1^B\geq0\\
                   \ell_0^B+\ell_1^B=a-1}}
   p_{s+a-1}
   =
   a\,p_{s+a-1}.
\]
This graph computation recovers the coefficient used in the first-order
proposition in Section~\ref{subsec: hc power sum formula}, without
evaluating a matrix-index sum.
\end{example}

\begin{example}[The graph for the \(r=4\) data]
\label{Appexample:r=4}
Take
\[
   r=4,\qquad s=2,
\]
and
\[
   \pi=\{B_1,B_2,B_3\},
   \qquad
   B_1=\{1\},\quad
   B_2=\{2,4\},\quad
   B_3=\{3\}.
\]
Assume \(n\geq3\), and let
\[
   \alpha(B_1)=1,\qquad
   \alpha(B_2)=3,\qquad
   \alpha(B_3)=1.
\]
The singleton blocks have the fixed orderings
\[
   \bfi^{B_1}=(1),
   \qquad
   \bfi^{B_3}=(3),
\]
whereas the two orderings of \(B_2\) are
\[
   \bfi^{B_2}=(2,4),
   \qquad
   \bfi^{B_2}=(4,2).
\]

The distinguished edge is \(\{0,1\}\), and the singleton blocks contribute
\[
   B_1:\ \{1,2\},
   \qquad
   B_3:\ \{3,4\}.
\]
For the ordering \((2,4)\), the block \(B_2\) contributes
\[
   \{2,0\},
   \qquad
   \{4,3\},
\]
where the first edge uses the fact that the cyclic successor of \(4\) is
\(0\).  For the ordering \((4,2)\), the same block contributes
\[
   \{4,3\},
   \qquad
   \{2,0\};
\]
here the second edge uses the cyclic successor of \(4\).
Thus the two orderings produce the same underlying undirected edge set.
The edge \(\{3,4\}\) is contributed by both \(B_2\) and \(B_3\), but its
repetition does not change the underlying graph.  For either ordering the
connected components are
\[
   C_1=\{0,1,2\},
   \qquad
   C_2=\{3,4\}.
\]
\end{example}

\begin{example}[The complete \(r=4\) coefficient]
\label{Appexample:r=4contd}
We continue with the data and the two orderings in
Example~\ref{Appexample:r=4}.  For each singleton block, admissibility
forces all exponent data to be zero.  For \(B_2\), it gives
\[
   \ell_0^{B_2}+\ell_1^{B_2}+\ell_2^{B_2}=1,
\]
so the three admissible choices are
\[
   (1,0,0),\qquad
   (0,1,0),\qquad
   (0,0,1).
\]
Together with the two orderings of \(B_2\), these give six graph
contributions.
\begin{center}
\small
\renewcommand{\arraystretch}{1.2}
\begin{tabular}{c|c|c|c}
\(\bfi^{B_2}\) &
\((\ell_0^{B_2},\ell_1^{B_2},\ell_2^{B_2})\) &
\((L_{C_1},L_{C_2})\) &
power-sums \\
\hline
\((2,4)\) & \((1,0,0)\)  & \((2,1)\) & \(p_1p_2\) \\
\((2,4)\) & \((0,1,0)\)  & \((3,0)\) & \(p_0p_3\) \\
\((2,4)\) & \((0,0,1)\)  & \((2,1)\) & \(p_1p_2\) \\
\((4,2)\) & \((1,0,0)\)  & \((3,0)\) & \(p_0p_3\) \\
\((4,2)\) & \((0,1,0)\)  & \((2,1)\) & \(p_1p_2\) \\
\((4,2)\) & \((0,0,1)\)  & \((3,0)\) & \(p_0p_3\)
\end{tabular}
\end{center}
The three choices for the ordering \((2,4)\) therefore contribute
\[
   2p_1p_2+p_0p_3,
\]
and the three choices for the ordering \((4,2)\) contribute
\[
   p_1p_2+2p_0p_3.
\]
Summing all six graph contributions yields the complete coefficient
\[
   C_{\pi,\alpha}^{2,4}(p)
   =
   3p_1p_2+3p_0p_3
   =
   3p_1p_2+3np_3.
\]
\end{example}

\bibliographystyle{plain}
\bibliography{bib_ncquantandRCA}

\end{document}